\DeclareRobustCommand{\SkipTocEntry}[5]{}
\setlist[enumerate,1]{label=\textup{(\arabic*)}}
\setlist[enumerate,2]{label=\textup{(\alph*)}}
\numberwithin{figure}{section}
\numberwithin{equation}{section}
\theoremstyle{plain}
\newtheorem{theorem}[equation]{Theorem}
\newtheorem{lemma}[equation]{Lemma}
\newtheorem{proposition}[equation]{Proposition}
\newtheorem{deflem}[equation]{Definition and Lemma}
\newtheorem{corollary}[equation]{Corollary}
\theoremstyle{definition}
\newtheorem{definition}[equation]{Definition}
\theoremstyle{remark}
\newtheorem{remark}[equation]{Remark}
\newtheorem{example}[equation]{Example}
\newcommand{\N}{\mathbb{N}}
\newcommand{\Z}{\mathbb{Z}}
\newcommand{\R}{\mathbb{R}}
\newcommand{\C}{\mathbb{C}}
\newcommand{\T}{\mathbb{T}}
\newcommand{\Mat}{\mathbb{M}}
\newcommand{\Cpt}{\mathbb{K}}
\newcommand{\Bdd}{\mathbb{B}}
\newcommand{\K}{\relax\ifmmode\operatorname{K}\else\textup{K}\fi}
\newcommand{\KK}{\relax\ifmmode\operatorname{KK}\else\textup{KK}\fi}
\newcommand{\KR}{\relax\ifmmode\operatorname{KR}\else\textup{KR}\fi}
\newcommand{\KKR}{\relax\ifmmode\operatorname{KKR}\else\textup{KKR}\fi}
\newcommand{\KO}{\relax\ifmmode\operatorname{KO}\else\textup{KO}\fi}
\newcommand{\KKO}{\relax\ifmmode\operatorname{KKO}\else\textup{KKO}\fi}
\newcommand{\st}{\relax\ifmmode{}^*\else{}\textup{*}\fi}
\newcommand{\Cst}{\relax\ifmmode\mathrm{C}^*\else\textup{C*}\fi}
\newcommand{\Cont}{\mathrm{C}}
\newcommand{\Co}{\mathrm{C}_0}
\newcommand{\Cc}{\mathrm{C}_\mathrm{c}}
\newcommand{\Cb}{\mathrm{C}_\mathrm{b}}
\newcommand{\CRoe}{\mathcal{C}_\Roe}
\newcommand{\CstRoe}{\Cst_\Roe}
\newcommand{\CstuRoe}{\Cst_{\mathrm{u,Roe}}}
\newcommand{\Cl}{\ensuremath{\mathrm{C}\ell}}
\newcommand{\CCl}{\ensuremath{\mathbb{C}\ell}}
\newcommand{\defeq}{\mathrel{\vcentcolon=}}
\newcommand{\dom}{\operatorname{dom}} 
\newcommand{\id}{\operatorname{id}} 
\newcommand{\ev}{\operatorname{ev}} 
\newcommand{\supp}{\operatorname{supp}} 
\newcommand{\Ball}{\operatorname{B}}
\newcommand{\Ad}{\operatorname{Ad}}
\newcommand{\Aut}{\operatorname{Aut}}
\newcommand{\Del}{\operatorname{Del}}
\newcommand{\Roe}{\mathrm{Roe}}
\newcommand{\Gpd}{\mathrm{Gpd}}
\newcommand{\bulkcycle}{\prescript{}{d}{\lambda}_{\Omega_0}}
\newcommand{\pos}{{\scriptstyle\mathsf{X}}} 
\DeclarePairedDelimiter{\set}{\lbrace}{\rbrace} 
\DeclarePairedDelimiter{\abs}{\lvert}{\rvert} 
\DeclarePairedDelimiter{\norm}{\lVert}{\rVert} 
\DeclarePairedDelimiter{\ket}{\lvert}{\rangle} 
\DeclarePairedDelimiter{\bra}{\langle}{\rvert} 
\DeclarePairedDelimiter{\braket}{\langle}{\rangle} 
\DeclarePairedDelimiterX{\braketvert}[3]{\langle}{\rangle}{#1\,\delimsize\vert #2\delimsize\vert\,\mathopen{}#3} 
\begin{document}
\title[Robustness of topological~phases]{Robustness of topological~phases
on aperiodic~lattices}
\author{Yuezhao Li}
\address{Mathematical Institute \\
Universiteit Leiden \\
Einsteinweg 55 \\
2333 CA Leiden \\
The Netherlands}
\email{y.li@math.leidenuniv.nl}
\curraddr{Max-Planck-Institut f\"ur Mathematik \\
Vivatsgasse 7 \\
53111 Bonn \\
Germany}
\email{liy@mpim-bonn.mpg.de}

\begin{abstract}
We study the robustness of topological phases on aperiodic lattices by
constructing \st-homomorphisms from the groupoid model to the 
coarse-geometric model of observable C*-algebras. 
These \st-homomorphisms induce maps in K-theory and Kasparov theory.
We show that the strong topological phases
in the groupoid model are detected by position spectral triples. We show
that topological phases coming from stacking along another Delone set
are always weak in the coarse-geometric sense.
\end{abstract}
\maketitle

\tableofcontents

\setcounter{section}{0}
\renewcommand\theHsection{P1.\thesection}
\renewcommand\thesection{\arabic{section}}

\section{Introduction}
In this article, we aim at a new look at the following question:
\begin{quote}
How to understand the robustness of topological phases of a discrete
physical system, described by a generic aperiodic point pattern?
\end{quote}

We begin with some background around it.  Topological insulators are
materials that are insulating in the bulk but nevertheless possess metallic
edges. The current flowing on the edge is usually quite robust under
disorder coming from impurity of the crystal.  Starting from Bellissard,
van~Elst and Schulz-Baldes
\cites{Bellissard-vElst-SBaldes:NCG_of_QHE,Bellissard:K-theory_in_solid_state_physics},
the non-commutative and \Cst-algebraic tools have been intensively applied
to the study of such materials and in particular, leading to an
interpretation of the Kubo formula in the integer quantum Hall effect
(IQHE) as a quantised index pairing.

In physics literature, the model Hamiltonian of a topological insulator is
usually given by a tight-binding, short-range operator supported on a
periodic, square
lattice, invariant under the translation by its unit cell vector. Such a
Hamiltonian \( H \)  (or its spectral projection) belong to a (noncommutative)
torus \( A_\vartheta=\C\rtimes_\vartheta\Z^d \), 
the noncommutativity coming from a 2-cocycle twist \( \vartheta \)  
given by the external magnetic
field. If the Hamiltonian is not translation invariant, then Bellissard
suggested to replace \( A_\vartheta \) by a twisted crossed product \(
\Cont(\Omega)\rtimes_\vartheta\Z^d \), where \( \Omega \) is a compact
space called the hull of \( H \). 
This description is applicable even if the underlying space is no longer
a square lattice, but comes from a tiling subject to some properties, cf.
\cites{Anderson-Putnam:Substitution_tilings,Sadun-Williams:Tilings_Cantor_fiber_bundles}.

There are, however, amorphous materials, like liquid crystals and glass,
that cannot be modelled using such methods. Even if for quasi-crystals such
that the method of
\cites{Anderson-Putnam:Substitution_tilings,Sadun-Williams:Tilings_Cantor_fiber_bundles}
applies, then one has to essentially provide a (non-canonical) \( \Z^d
\)-labelling of the underlying
lattice, under which the Hamiltonian remains short-range
(cf.\cite{Bourne-Prodan:Noncommutative_Chern_numbers}).

These suggest to use more general point sets in
\( \R^d \), modelling the atomic sites of this material, as their
underlying geometric spaces. The dynamics thereon 
are studied by more general actions other than a \( \Z^d \)-translation.
It was explained in
\cite{Bellissard-Herrmann-Zarrouati:Hulls_aperiodic_solids} that 
such point sets should be \emph{Delone sets} (\cref{def:Delone_set}). 
Let \( \Lambda \) be a Delone set, whose points are considered as the sites
in a physical system. How should one model the observable \Cst-algebra \(
A \) from it?

The choice of an observable \Cst-algebra should be aligned with the
following principle: it should be \emph{large} enough to contain all
possible Hamiltonians, and \emph{small} enough to supply useful homotopy
theory (K-theory).  In recent years, there have been two main approaches to
this modelling problem, which provide toolkits to compute invariants of
topological phases:

\begin{itemize}
\item \emph{``dynamical'' approach} 
describes a crossed product \Cst-algebra,
covariant for the \emph{groupoid} actions on the aperiodic point pattern,
then restricts to the dynamical hull of the point pattern.
This gives a groupoid \Cst-algebra.
\item \emph{``coarse-geometric'' approach} 
describes a \Cst-algebra which is stable
under all \emph{short-range}, \emph{locally--finite-rank} perturbations
that do not close the gap. This leads to a Roe \Cst-algebra.
\end{itemize}

The groupoid approach gives an \'etale groupoid \(
\mathcal{G}_\Lambda\rightrightarrows\Omega_0 \) from a Delone set \(
\Lambda \), and yields a ``tight-binding'' groupoid \Cst-algebra \(
\Cst(\mathcal{G}_\Lambda) \) ,
cf.~\cites{Bellissard-Herrmann-Zarrouati:Hulls_aperiodic_solids,Bourne-Prodan:Noncommutative_Chern_numbers,Bourne-Mesland:Topological_phases
,Mesland-Prodan:Interacting}.
This is a generalisation of the well-studied periodic model: if \(
\Lambda=\Z^d \), then \( \Cst(\mathcal{G}_\Lambda)\simeq\Cst(\Z^d) \) is
the group \Cst-algebra of \( \Z^d \).  Here tight-binding means the
following: the regular representation of \( \Cst(\mathcal{G}_\Lambda) \) is
given by a Hilbert \( \Cont(\Omega_0) \)-module, or equivalently, a
continuous field of Hilbert spaces over \( \Omega_0 \), where \( \Omega_0
\) is unit space of the groupoid \( \mathcal{G}_\Lambda \). The fibres of
this continuous field are canonically identified with \( \ell^2(\omega) \),
where \( \omega\in\Omega_0 \) is a Delone set, either as a translated copy
of \( \Lambda \), or as a weak\st-limit of such sets. Thus \(
\Cst(\mathcal{G}_\Lambda) \) consists of families of model Hamiltonians \(
(H_\omega)_{\omega\in\Omega_0} \), where \( H_\omega \) acts on \(
\ell^2(\omega) \), in a covariant way with respect to the groupoid action.
 
The coarse-geometric approach describes a \Cst-algebra which is stable
under all possible short-range perturbations. This gives rise to a (uniform
or non-uniform) Roe \Cst-algebra \( \CstRoe(\Lambda) \) from \( \Lambda \),
cf.~\cites{Kubota:Controlled_topological_phases,Ewert-Meyer:Coarse_geometry}.
We choose to work with the non-uniform Roe \Cst-algebras, whose advantages
over the uniform ones were explained in \cite{Ewert-Meyer:Coarse_geometry}.

In this setup, a Delone set \( \Lambda\subseteq\R^d \) is considered as a
discrete metric space. Then \( L^2(\R^d) \) carries an ample representation
of \( \Co(\Lambda) \), which generates a Roe \Cst-algebra \(
\CstRoe(\Lambda)\). Physically, a short-range Hamiltonian \( H \) has
matrix coeffcients \( H_{x,y}\defeq\braketvert*{x}{H}{y} \) of fast enough
decay, thus can be approximated by controlled operators. Every unit cell
should have finite degrees of freedom, coming from the number of electron
orbits and their spins. Thus the restriction of \( H \) to any finite
region should have finite-rank. Therefore, the Roe \Cst-algebra \(
\CstRoe(\Lambda) \) can be viewed as the universal \Cst-algebra that
contains all such Hamiltonians.

If we model a topological insulator on a Delone set \( \Lambda \) by the
Roe \Cst-algebra \( \CstRoe(\Lambda) \), then its topological phase is
robust in a very strong sense: such topological phases (and their
associated topological invariants) are robust under any
short-range, locally--finite-rank perturbation that preserves the
spectral gap and the symmetry of the system. 
It follows from the K-theory of
(real) Roe \Cst-algebra (cf.~\eqref{eq:K-theory_Roe_Cst-algebras}) 
that such topological phase are completely classified by their topological
invariants, which are \( \Z \)- or \( \Z/2 \)-valued indiced that belong to
the real K-theory groups of \( \R \). 
The groupoid model \( \Cst(\mathcal{G}_\Lambda) \), on the other hand, has
more involved K-theory and numerical invariants. One natural question is
in which sense those topological phases described by the groupoid model 
are still robust. Moreover, do they
lead to interesting invariants that do not occur in the coarse-geometric
model? 

Questions related to this type of robustness are usually answered by
showing that the numerical index is still continuous, even for some more
general perturbations of the Hamiltonian (which does not have to belong to
the observable \Cst-algebra), together with a ``quantisation'' statement
that the range of the numerical index is \( \Z \). This implies that such
perturbations leave the topological invariants unchanged. 

We describe a different approach to this robustness question by
comparing the groupoid model with the coarse-geometric model on \( \Lambda
\) using the regular representation. The topological invariants defined in
\cite{Bourne-Mesland:Topological_phases} factor through this
representation.  Therefore, since \( \CstRoe(\Lambda) \) is ``robust'', one
only needs to understand which topological phases in \(
\Cst(\mathcal{G}_\Lambda) \) may still survive there. To this end, we introduce
a handy definition of position spectral triples
(\cref{def:position_spectral_triple}). Both the groupoid model and the
coarse-geometric model possess such type of spectral triples. In
particular, the position spectral triple \( \xi^\Roe_\omega \) over the Roe
\Cst-algebra \( \CstRoe(\omega) \) induces an isomorphism \(
\KK(\Cl_{d,0},\CstRoe(\omega))\to\Z \)
(\cref{thm:position_spectral_triple_Roe_isomorphism}).  Then we show that
those topological invariants of the groupoid model, given by localising the
``bulk cycle'' \( \bulkcycle \) at \( \omega\in\Omega_0 \), are pullbacks
of the KK-class of \( \xi^\Roe_\omega \) under the comparison
\st-homomorphism \( \Mat_N(\Cst(\mathcal{G}_\Lambda))\to\CstRoe(\omega) \)
(\cref{thm:position_detects_strong_phases}).  Such invariants are therefore
strong in the sense of \cite{Ewert-Meyer:Coarse_geometry}.

As another application, we show in \cref{thm:stacked_phases_are_weak} that
certain numerical invariants on Delone sets, coming from ``stacking''
lower-dimensional topological phases, must be weak, i.e.~unstable under
perturbation. This generalises a result in
\cite{Ewert-Meyer:Coarse_geometry}, which explains why certain topological
phases of the periodic model ought to be weak. More precisely, the authors
of \cite{Ewert-Meyer:Coarse_geometry} compare the
periodic model and the coarse-geometric model using an injective
\st-homomorphism \( \Cst(\Z^d)\hookrightarrow\CstRoe(\Z^d) \).  It vanishes
on all but one \( \Z \)-component of K-theory. In particular, topological
phases coming from ``stacking'' lower-dimensional topological phases will
all vanish in \( \K_*(\CstRoe(\Z^d)) \). This allows us to conclude that the
induced maps \( \K_*(\Cst(\Z^{d}))\to\K_*(\Cst(\Z^{d+1})) \) also vanishes
there. We show that one can replace \( \Z^{d+1} \) by a product Delone set
of the form \( \Lambda\times L \), where \( \Lambda\subseteq\R^p \) and \(
L\subseteq\R^q \) are both Delone sets. Using the same strategy in
\cite{Ewert-Meyer:Coarse_geometry}, we show that stacked topological phases
factor through the Roe \Cst-algebra of a flasque space, which has vanishing
K-theory. This allows us to conclude that such topological phases and their
numerical invariants have to vanish in the target Roe \Cst-algebra.

\subsection{Notation and conventions}
We fix the following symbols and conventions.

\paragraph{\it Dirac notation} 
Let \( \Lambda \) be a discrete set. We
write \( \ket{x} \) for the function in
\( \ell^2(\Lambda) \) which takes \( 1 \) on \( x \) and \( 0 \) elsewhere;
and \( \bra{x} \) for the rank-one operator \( \ell^2(\Lambda)\to\C \) defined by
\( \ket{y}\mapsto\braket*{x,y} \). Let \( T\in\Bdd(\ell^2(\Lambda)) \),
then by \( \braketvert*{x}{T}{y} \) we shall mean the inner product of \(
\ket*{x} \) and \( T\ket*{y} \). The collection 
\( (T_{x,y})_{x,y\in\Lambda} \)  of \(
T_{x,y}\defeq\braketvert{x}{T}{y} \) are called the \emph{matrix elements}
of \( T \). 

\paragraph{\it Tensor products}
Let \( A \) and \( B \) be \( \Z/2
\)-graded ``real''\Cst-algebras. We will write \( A \otimes B \) for their
\emph{graded}, minimal tensor
product. The grading will cause little difference in this article: we will
mostly consider \( \Z/2 \)-graded \Cst-algebras of the form
\( A\otimes\CCl_{p,q} \) or \( A\otimes\Cl_{p,q} \) for a trivial graded
(real or complex) \Cst-algebra \( A \). In such cases, 
the graded tensor product agrees with the ungraded version.

\paragraph{\it Group\textup{(}oid\textup{)} and Roe \Cst-algebras}
We write \( \Cst(\mathcal{G}) \) for the reduced \Cst-algebra of a groupoid
or a group, and \( \CstRoe(X,\mathcal{H}_X) \) for the Roe \Cst-algebra
defined by an ample module \( (X,\mathcal{H}_X) \). This is aligned with
the convention in \cite{Ewert-Meyer:Coarse_geometry}. As a potentially
confusing point, we note that \( \Cst(\Z^d) \) is the \emph{group}
\Cst-algebra of the discrete group \( \Z^d \), and 
\( \CstRoe(\Z^d) \) is the 
\emph{Roe} \Cst-algebra of the discrete metric space \( \Z^d \).   

\section{Topological phases and \K-theory}
In the simplest form, a (non-interacting) quantum system is
described by an unbounded, self-adjoint operator \( H \) (the Hamiltonian)
acting on a complex, separable Hilbert space \( \mathcal{H} \).
A \emph{topological insulator} is a quantum system
which carries certain symmetries, giving rise to \emph{topological
invariants} that distinguish different \emph{topological phases} of this
system.

The links between topological insulators and (real, bivariant)
K-theory has been investigated throughly in the recent decade,
cf.~\cites{Thiang:K-theoretic_classification,Kellendonk:Cst-algebraic_topological_phases,Bourne-Carey-Rennie:NC_framework_topological_insulators}.
In our case of interest, the links are made as follows:
\begin{enumerate}
\item For aperiodic materials, the physically
relevant symmetries are time-reversal symmetry, particle-hole symmetry and
chiral symmetry. These symmetries may be combined and give rise to several
symmetry types of the system. The
topological phase of such a material is represented by the class \(
[H]\in\KK(\Cl_{n,0},A) \) of the Hamiltonian 
in a certain bivariant K-theory (in fact, \( \KO
\)-theory) group of a real \Cst-algebra \( A \) (the observable
real \Cst-algebra), 
cf.~\citelist{\cite{Thiang:K-theoretic_classification}*{Table 1}
  \cite{Bourne-Carey-Rennie:NC_framework_topological_insulators}*{Table 1}
\cite{Kellendonk:Cst-algebraic_topological_phases}*{Section 6}}.

\item The topological invariant of a topological insulator can be computed
as an \emph{index pairing}.  That is, a \emph{Kasparov product} of the form
\[ 
  \KK(\Cl_{n,0},A)\times\KK(A\otimes\Cl_{0,d},\R)\to\KK(\Cl_{n,d},\R)\simeq\KO_{n-d}(\R),
\]
where 
\[ 
  \KO_{n-d}(\R)\simeq\begin{cases}
  \Z &\text{if }n-d\equiv 0,4{\hphantom{,6,7}}\!\mod 8; \\
  \Z/2 &\text{if }n-d\equiv 1, 2{\hphantom{,6,7}}\!\mod 8; \\
  0 &\text{if }n-d\equiv 3, 5, 6, 7\!\mod 8,
\end{cases}
\]
is the receptacle of \( \Z \)- or \( \Z/2 \)-valued indices that can be
measured in physical experiments.
\end{enumerate}

The section is organised as follows. 
We shall first recall in \cref{sec:real_Cst-algebras} the definitions of
``real'' and real \Cst-algebras and their representations. 
Then we study two special cases: the Clifford algebras
in \cref{sec:Clifford_algebras} and the graded ``real'' \Cst-algebra \(
\Cst(\Z^d)\otimes\CCl_{0,d} \) in \cref{sec:rep_Zd}. We recall a few
properties of Kasparov theory in \cref{sec:real_Kasparov_theory} that will
be used later. 

We introduce a class of spectral triples,
called position spectral triples, in \cref{sec:position_spectral_triple}.
We recall in \cref{sec:fundamental_class} 
that the position spectral triple \( \xi_{\Z^d} \) over \(
\Cst(\Z^d)_\R\otimes\Cl_{0,d} \) represents the fundamental class.
Via the Kasparov product, it generates a surjective group homomorphism
\[ 
  [\xi]\colon
  \KK(\Cst(\Z^d)_\R\otimes\Cl_{0,d})\to\KK(\Cl_{d,d},\R)\simeq\Z.
\]

\subsection{``Real'' and real \Cst-algebras}
\label{sec:real_Cst-algebras}
When we consider physical systems with anti-unitary symmetries, like
time-reversal or particle-hole symmetries, then we must represent them as
anti-unitary operators on \( \mathcal{H} \)  which commute or anti-commute
with the Hamiltonian \( H \). 
This turns the Hilbert space \( \mathcal{H} \)  into a ``real'' Hilbert
space \( (\mathcal{H},\Theta) \), and the observable
\Cst-algebra into a ``real'' \Cst-algebra \( (A,\mathfrak{r}) \). The
latter was introduced by Kasparov \cite{Kasparov:K-homology}*{Section 1,
Definition 3}.

We recall the definition of ``real'' and real \Cst-algebras and Hilbert
spaces. A ``real'' \Cst-algebra is sometimes referred to as
a Real \Cst-algebra (cf.~\cite{Bourne-Kellendonk-Rennie:Cayley_transform },
note the upper case R) or a \( \mathrm{C}^{*,\mathrm{r}} \)-algebra
(cf.~\cite{Kellendonk:Cst-algebraic_topological_phases}) in the literature.

\begin{definition}[\cite{Kellendonk:Cst-algebraic_topological_phases}*{Definition
3.7}]\label{def:Real_Cst-algebra_Hilbert_space}
A ``real'' structure on a \( \Z/2 \)-graded \Cst-algebra \( A \) is 
a conjugate-linear,
grading-preserving \st-automorphism \( \mathfrak{r}\colon A\to A \)
of order 2. A ``real''
\Cst-algebra is a \Cst-algebra together with a ``real'' structure on it.

A ``real'' involution on a \( \Z/2 \)-graded Hilbert space \( \mathcal{H}
\) is a conjugate-linear, grading-preserving automorphism \(
\Theta\colon\mathcal{H}\to\mathcal{H}
\)
of order two. A ``real'' Hilbert space is a Hilbert space \( \mathcal{H} \)
together with a ``real'' involution \( \Theta \)  on it.

A representation of a \( \Z/2 \)-graded ``real'' \Cst-algebra \( A \) is a
\st-homomorphism \( \pi\colon A\to\Bdd(\mathcal{H}) \) for a \( \Z/2
\)-graded ``real'' Hilbert space \( \mathcal{H} \), which intertwines both
the \( \Z/2 \)-gradings and the ``real'' structures.

\end{definition}

Let \( \Theta \) be a ``real'' involution on a \( \Z/2 \)-graded
Hilbert space. Then \( \Bdd(\mathcal{H}) \) becomes a \( \Z/2 \)-graded
``real'' \Cst-algebra with ``real'' structure
\( T\mapsto \Theta T\Theta^* \). A (possibly unbounded) operator on \(
\mathcal{H} \) is said to be ``real'' if it commutes with \( \Theta \). 

Now we recall real \Cst-algebras and real Hilbert spaces.

\begin{definition}\label{def:real_Cst-algebra_Hilbert_space}
A real Hilbert space is a Hilbert space over \( \R \). A real \Cst-algebra
is a norm-closed subalgebra of \( \Bdd(\mathcal{H_\R}) \), where \(
\mathcal{H_\R} \) is a Hilbert space over \( \R \). 
\end{definition}

If \( (A,\mathfrak{r}) \) is a ``real'' \Cst-algebra, then the fixed points
of its real structure
\[ 
A^{\mathfrak{r}}\defeq\set*{a\in A\;\middle|\;\mathfrak{r}(a)=a}
\]
is a real \Cst-algebra. Conversely, if \( A_\R \) is a real \Cst-algebra,
then \( A_\R\otimes_\R\C \) is a ``real'' \Cst-algebra with ``real''
structure
\[ 
\mathfrak{r}(a\otimes_\R z)\defeq a\otimes_\R\overline{z}.
\]

Let \( (X,\tau) \) be an locally compact, Hausdorff involutive space, 
that is, a locally compact Hausdorff space \( X \)
together with a homeomorphism \( \tau\colon X\to X \) satisfying \(
\tau^2=\id_X \). If \( X \) is a manifold, then we also say \( (X,\tau) \)
is a ``real'' manifold. The \Cst-algebra \( \Co(X) \) carries a ``real''
structure
\[ 
    \mathfrak{r}(f)(x)\defeq\overline{f(\tau(x))},
\]
yielding a ``real'' \Cst-algebra \( (\Co(X),\mathfrak{r}) \) as well as
its corresponding real \Cst-algebra \( \Co(X)^\mathfrak{r} \). 
The ``real'' Gelfand--Naimark theorem due to Arens and Kaplansky
\cite{Arens_Kaplansky:Topological_representation}*{Theorem 9.1} asserts
that every commutative ``real'' \Cst-algebra is isomorphic to \(
(\Co(X),\mathfrak{r}) \) for some involutive space \( (X,\tau) \); and
every commutative real \Cst-algebra is isomorphic to 
\( \Co(X)^\mathfrak{r} \).  

\subsubsection{Clifford algebras}
\label{sec:Clifford_algebras}
Let \( \CCl_{p,q} \) be the finite-dimensional \( \Z/2 \)-graded ``real''
\Cst-algebra
generated by \( \gamma^1,\dots,\gamma^p,\rho^1,\dots,\rho^q \), satisfying:
\begin{itemize}
\item \( \gamma^1,\dots,\gamma^p \) are odd, self-adjoint, involutive and 
real;
\item \( \rho^1,\dots,\rho^q \) are odd,
anti-self-adjoint, anti-involutive and real;
\item \( \gamma^1,\dots,\gamma^p,\dots,\rho^1,\dots,\rho^q \) mutually
anti-commute.
\end{itemize}

That is, we require that for all \( i,j \): 
\begin{alignat*}{3}
&(\gamma^j)^2=1,\quad&&(\gamma^j)^*=\gamma^j,\quad&&\mathfrak{r}(\gamma^j)=\gamma^j;
\\
&(\rho^j)^2=-1,\quad&&(\rho^j)^*=-\rho^j,\quad&&\mathfrak{r}(\rho^j)=\rho^j.
\end{alignat*}

The real subalgebra of \( \CCl_{p,q} \) is the \( \R \)-algebra generated by the
same generators and relations. We write \(
\Cl_{p,q}\defeq(\CCl_{p,q})^\mathfrak{r}
\) for this \( \Z/2 \)-graded, real \Cst-algebra. 
Up to Morita equivalence of graded real
\Cst-algebras, there are only eight possible \( \Cl_{p,q} \)
satisfying
\[ 
    \Cl_{p,q}\otimes\Cl_{p',q'}\simeq\Cl_{p+p',q+q'}.
\]
Moreover, \( \Cl_{1,1} \) is isomorphic to the \( \Z/2 \)-graded 
real \Cst-algebra \( \Mat_2(\R)\), whose grading is given by 
diagonal--off-diagonal elements. Thus
\[
    \Cl_{d,d}\simeq\Mat_2(\R)\otimes\Mat_2(\R)\otimes\dots\otimes\Mat_2(\R)
    \simeq\Mat_{2^d}(\R).
\]

Kasparov has constructed a canonical representation of \( \CCl_{p,q} \)
in \cite{Kasparov:K-functor_extension}. Let \( \C^d \) be the
``real'' Hilbert space with basis \( e_1,\dots,e_d \) and ``real''
involution
\[
    \sum_{i=1}^dc_ie_i\;\longmapsto\;\sum_{i=1}^d\overline{c_i}e_i.
\]
Let \( \bigwedge\nolimits^*\C^{d} \) be the exterior algebra of \( \C^d \). It is graded by
the subspace of odd or even differential forms
\(
\bigwedge\nolimits^*\C^{d}=\bigwedge\nolimits^{\text{odd}}\C^d\oplus\bigwedge\nolimits^{\text{even}}\C^d
\).
The ``real'' structure on \( \C^d \) extends to
\( \bigwedge\nolimits^*\C^{d} \), that is,
\[
    \sum_{i_ii_2\dots i_k}a_{i_1i_2\dots i_k}e_{i_1}\wedge
    e_{i_2}\wedge\dots\wedge
    e_{i_k}\longmapsto\sum_{i_ii_2\dots
    i_k}\overline{a_{i_1i_2\dots i_k}}e_{i_1}\wedge
    e_{i_2}\wedge\dots\wedge e_{i_k},
\]
turning \( \bigwedge\nolimits^*\C^{d} \) into a \( \Z/2 \)-graded ``real'' Hilbert space.

\begin{deflem}\label{deflem:standard_representation_Cl}
Let \( j\in\{1,\dots,d\} \) and \(
\lambda_j\colon\bigwedge\nolimits^*\C^{d}\to\bigwedge\nolimits^*\C^{d} \)
be the exterior product with \( e_j \), that is, \(
\lambda_j(\omega)=e_j\wedge\omega \). Then its adjoint \(
\lambda_j^*\colon\bigwedge\nolimits^*\C^{d}\to\bigwedge\nolimits^*\C^{d} \)
is contraction with \( e_j \), that is, \(
\lambda_j^*(\omega)=e_j\mathbin{\lrcorner}\omega \). 

As a consequence, there is a representation of the \( \Z/2 \)-graded
``real'' \Cst-algebra \( \CCl_{p,q} \) on \( \bigwedge\nolimits^*\C^{p+q}
\), sending \( \gamma^j \) to \( \lambda_j+\lambda_j^* \) and \( \rho^j \)
to \( \lambda_{p+j}-\lambda_{p+j}^* \). We call it the standard
representation of \( \CCl_{p,q} \). 
\end{deflem}

\begin{proof}
The operators \( \lambda_j \) and their adjoints satisfy
\[ 
\lambda_i\lambda_j+\lambda_j\lambda_i=0,\quad
\lambda_i^*\lambda_j+\lambda_j\lambda_i^*=\braket*{e_i,e_j}.
\]
Moreover, \( \lambda_j \) and \( \lambda_j^* \) are odd and real for all \(
j\). So we have
\[
(\lambda_j+\lambda_j^*)^2=1,\quad (\lambda_j-\lambda_j^*)^2=-1,\quad
(\lambda_i+\lambda_i^*)(\lambda_{p+j}-\lambda_{p+j}^*)=0.
\]
Thus the operators \(
\lambda_1+\lambda_1^*,\dots\lambda_r+\lambda_r^*,\dots,\lambda_{p+1}-\lambda_{p+1}^*,\dots\lambda_{p+q}-\lambda_{p+q}^*
\) generate a copy of \( \CCl_{p+q} \) in \(
\Bdd(\bigwedge\nolimits^*\C^{p+q}) \).   
\end{proof}

\subsubsection{The \( \Z/2 \)-graded ``real'' \Cst-algebra \(
\Cst(\Z^d)\otimes\CCl_{0,d} \)}
\label{sec:rep_Zd}
Next, we describe a representation of the graded ``real'' 
\Cst-algebra \( \Cst(\Z^d)\otimes\CCl_{0,d} \) following
\cite{Ewert-Meyer:Coarse_geometry}*{Section 4}.
The grading on \( \Cst(\Z^d)\otimes\CCl_{0,d} \) is the tensor 
product of the trivial grading on \( \Cst(\Z^d) \) with the 
standard grading on \( \CCl_{0,d} \). 
The ``real'' structure on \( \Cst(\Z^d) \) is given by the pointwise complex
conjugation, that is,
\[
    \mathfrak{r}(f)(n)\defeq \overline{f(n)},\quad
    f\in\Cc(\Z^d),\;n\in\Z^d.
\]
Then the regular representation of the complex \Cst-algebra \( \Cst(\Z^d)
\) extends to a \st-representation on the ``real'' Hilbert space \(
\ell^2(\Z^d) \), whose ``real'' structure is given by pointwise conjugation.
The ``real'' structure on \( \Cst(\Z^d)\otimes\CCl_{0,d} \) is the tensor product
of this ``real'' structure with the standard one on \( \CCl_{0,d} \).

The Fourier transform maps the ``real'' \Cst-algebra \( \Cst(\Z^d) \) to
the ``real'' \Cst-algebra \( \Cont(\T^d) \). Here \( \T^d \) is a ``real''
\( d \)-torus, with involution \( z\mapsto \overline{z} \).

\begin{lemma}[cf.~\cite{Ewert-Meyer:Coarse_geometry}*{Section
4}]\label{lem:real_Fourier_transform}
There is an isomorphism of \( \Z/2 \)-graded, ``real'' Hilbert spaces
\begin{equation}\label{eq:real_Fourier_transform}
U\colon\ell^2(\Z^d)\otimes\bigwedge\nolimits^*\C^{d}\xrightarrow{\sim}L^2\left(\bigwedge\nolimits^*\mathrm{T}^*\T^d\right)
\end{equation}
given by
\[ 
\ket*{k}\otimes e_{i_1}\wedge\dots \wedge 
e_{i_l}\mapsto \frac{z^k}{z_{i_1}\dots
z_{i_l}}\mathrm{d}z_{i_1}\wedge\dots\wedge \mathrm{d}z_{i_l},
\]
where
\[ 
    z^k\defeq z_1^{k_1}z_2^{k_2}\dots z_d^{k_d},\quad
    k=(k_1,\dots,k_d)\in\Z^d.
\]
This induces an isomorphism of \( \Z/2 \)-graded ``real'' \Cst-algebras 
\[
    \Cst(\Z^d)\otimes\CCl_{0,d}\simeq\Cont(\T^d)\otimes\CCl_{0,d}\simeq\Cont(\T^d,\CCl(\T^d))
\]
where \( \CCl(\T^d) \) is the ``real'' Clifford bundle of the ``real''
manifold \( \T^d \). In particular,
under this isomorphism, the canonical representation of \( \CCl_{0,d} \) as
in \cref{deflem:standard_representation_Cl} is
mapped to the Clifford multiplication of \( \CCl(\T^d) \) on 
\( L^2\left(\bigwedge\nolimits^*\mathrm{T}^*\T^d\right) \).
\end{lemma}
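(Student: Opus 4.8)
The plan is to show that the linear extension of the displayed formula defines a unitary of $\Z/2$-graded ``real'' Hilbert spaces, and then to conjugate the tensor-product representation of $\Cst(\Z^d)\otimes\CCl_{0,d}$ through it. It is convenient to pass first to the translation-invariant coframe $\eta_j\defeq\mathrm{d}z_j/z_j$ of $\T^d$ and rewrite $U$ as
\[
\ket*{k}\otimes e_{i_1}\wedge\dots\wedge e_{i_l}\;\longmapsto\;z^k\cdot\eta_{i_1}\wedge\dots\wedge\eta_{i_l}.
\]
The wedges $\eta_{i_1}\wedge\dots\wedge\eta_{i_l}$ with $i_1<\dots<i_l$ form a global orthonormal coframe trivialising $\bigwedge\nolimits^{*}\mathrm{T}^{*}\T^d$, and $\{z^k\}_{k\in\Z^d}$ is an orthonormal basis of $L^2(\T^d)$; with the normalisations of the flat metric and of Haar measure of \cite{Ewert-Meyer:Coarse_geometry}*{Section 4}, the right-hand vectors therefore constitute an orthonormal basis of $L^2\bigl(\bigwedge\nolimits^{*}\mathrm{T}^{*}\T^d\bigr)$. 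As $U$ carries the evident orthonormal basis of $\ell^2(\Z^d)\otimes\WC{d}$ bijectively onto it, $U$ extends to a unitary, and it is even since the degree-$l$ summand of $\WC{d}$ is sent to $l$-forms.

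Next I would check that $U$ is ``real''-linear. The ``real'' structure on $L^2\bigl(\bigwedge\nolimits^{*}\mathrm{T}^{*}\T^d\bigr)$ induced by $\tau(z)=\overline z$ sends $f\cdot\alpha$ to $\overline{f\circ\tau}\cdot\overline{\tau^{*}\alpha}$, where the last bar is fibrewise conjugation of the complexified cotangent bundle. On $\T^d$ one has $\tau^{*}z_j=z_j^{-1}$, hence $\overline{z^k\circ\tau}=\overline{\overline{z^k}}=z^k$; moreover $\tau^{*}\eta_j=-\eta_j$ while the fibrewise conjugate of $\eta_j=\mathrm{i}\,\mathrm{d}\theta_j$ (writing $z_j=e^{\mathrm{i}\theta_j}$) is $-\eta_j$, so $\overline{\tau^{*}\eta_j}=\eta_j$. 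Hence every basis form $z^k\cdot\eta_{i_1}\wedge\dots\wedge\eta_{i_l}$ is fixed by this ``real'' structure, exactly as $\ket*{k}\otimes e_{i_1}\wedge\dots\wedge e_{i_l}$ is fixed by the ``real'' structure of the source. The same computation shows each $\eta_j$ is a ``real'' unit section, so $(\eta_1,\dots,\eta_d)$ is a global ``real'' orthonormal coframe and $\CCl(\T^d)$ is the trivial ``real'' Clifford bundle $\T^d\times\CCl_{0,d}$.

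To obtain the $\Cst$-algebra statement I would transport the representations. On the $\Cst(\Z^d)$-factor, $U$ implements the Fourier transform $\Cst(\Z^d)\xrightarrow{\sim}\Cont(\T^d)$, taking the regular representation to the representation by multiplication operators and the ``real'' structure $\mathfrak r(f)(n)=\overline{f(n)}$ to $\tau(z)=\overline z$. On the $\CCl_{0,d}$-factor, conjugation by $U$ turns the operator $\lambda_j$ of \cref{deflem:standard_representation_Cl} (exterior product with $e_j$) into exterior product with $\eta_j$, hence $\lambda_j^{*}$ into contraction $\eta_j\inprod(\cdot)$, so that $\rho^j=\lambda_j-\lambda_j^{*}$ becomes the pointwise Clifford multiplication $c(\eta_j)=\eta_j\wedge(\cdot)-\eta_j\inprod(\cdot)$ by a ``real'' unit section. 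Since Clifford multiplication commutes with multiplication by functions, $\Cont(\T^d)$ together with $c(\eta_1),\dots,c(\eta_d)$ generate the faithful representation of $\Cont(\T^d,\CCl(\T^d))$ on $L^2\bigl(\bigwedge\nolimits^{*}\mathrm{T}^{*}\T^d\bigr)$ by pointwise Clifford multiplication; under the trivialisation of the previous paragraph this algebra is $\Cont(\T^d)\otimes\CCl_{0,d}\cong\Cst(\Z^d)\otimes\CCl_{0,d}$, compatibly with gradings and ``real'' structures, and the last assertion of the lemma is exactly the identity $U\rho^jU^{*}=c(\eta_j)$ obtained along the way.

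I expect the ``real''-structure bookkeeping of the second step to be the only delicate point: one must identify the ``real'' structure on $L^2\bigl(\bigwedge\nolimits^{*}\mathrm{T}^{*}\T^d\bigr)$ correctly and track the cancellation between $\tau^{*}$ and fibrewise conjugation that makes each $\eta_j$—and therefore each basis vector—invariant; a sign slip here would falsely obstruct the triviality of the ``real'' Clifford bundle. The orthonormality (once the metric and measure normalisations of \cite{Ewert-Meyer:Coarse_geometry} are fixed), the grading, and the identification of the Clifford multiplications are then routine.
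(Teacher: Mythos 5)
Your argument is correct. The paper itself gives no proof of this lemma---it is quoted from \cite{Ewert-Meyer:Coarse_geometry}*{Section 4}---and your direct verification (passing to the invariant coframe \(\eta_j=\mathrm{d}z_j/z_j\), checking that \(U\) carries an orthonormal basis of ``real'' vectors to one, and conjugating the generators so that \(\rho^j\mapsto c(\eta_j)\)) is exactly the computation being invoked; in particular your sign bookkeeping \(\tau^*\eta_j=-\eta_j\), \(\overline{\eta_j}=-\eta_j\), hence \(\overline{\tau^*\eta_j}=\eta_j\), is right and is consistent with the companion computation the paper does carry out in \cref{lem:Dirac_element_position_operator}.
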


We note that the ``real'' structure \( \mathfrak{r} \) on the ``real'' \Cst-algebra \(
\Cont(\T^d) \) induced by the involution \( z\mapsto \overline{z} \)  
is given by
\[ 
  (\mathfrak{r}f)(x)\defeq \overline{f(-x)},
\]
thus the fixed point algebra is
\[ 
  \Cont(\T^d)^\mathfrak{r}\defeq
  \set*{f\in\Cont(\T^d)\;\middle|\;\overline{f(z)}=f(\overline{z})}.
\]
Under the Fourier transform, \( \Cont(\T^d)^\mathfrak{r} \) is identified
with the real group \Cst-algebra \( \Cst(\Z^d)_\R \),
cf.~\cref{sec:fundamental_class}.

Let \( \pos_j \) be the \( j \)-th position operator on \( \ell^2(\Z^d) \)
defined by
\[ 
    \pos_j\phi(x)\defeq x_j \phi(x),\quad
    \phi\in\Cc(\Z^d),\;x=(x_1,\dots,x_d)\in\Z^d.
\]
Thus the operator \( \pos=\sum_{j=1}^d\pos_j\otimes\gamma^j \) is
an odd, essentially
self-adjoint operator on the \( \Z/2 \)-graded ``real'' Hilbert space \(
\ell^2(\Z^d)\otimes\bigwedge\nolimits^*\C^{d} \).

\begin{lemma}\label{lem:Dirac_element_position_operator}
The Fourier transform \eqref{eq:real_Fourier_transform} induces a unitary
equivalence between the unbounded operator \( \pos=\pos_j\otimes\gamma^j \)
on \( \ell^2(\Z^d)\otimes\bigwedge^*\C^d \) and the Hodge--de Rham operator
\( \mathrm{d}+\mathrm{d}^* \) on \( L^2(\bigwedge^*\mathrm{T}^*\T^d) \).
Both operators preserve the real subspace \(
\ell^2(\Z^d)_\R\otimes\bigwedge\nolimits^*\R^{d}\simeq
L^2(\bigwedge^*\mathrm{T}^*\T^d)_\R \), and hence are ``real''. 
\end{lemma}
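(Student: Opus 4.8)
The plan is to identify the two operators on the dense ``plane-wave'' domain where both are given by explicit formulas, and then to read off the reality statement from \cref{lem:real_Fourier_transform}.

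First I would record how the two tensor factors of $\pos$ transform under $U$. Writing $z_j=e^{2\pi i\theta_j}$ for the angular coordinates, one has $\partial_{\theta_j}z^k=2\pi i\,k_j z^k$, and $U$ sends $\ket*{k}\otimes e_I$ to $z^k z_I^{-1}\mathrm{d}z_I=(2\pi i)^{\abs{I}}z^k\,\mathrm{d}\theta_I$, a trigonometric polynomial times a \emph{translation-invariant} (hence parallel) form. Consequently $\pos_j$, i.e.\ multiplication by the Fourier label $k_j$, is intertwined by $U$ with $\tfrac{1}{2\pi i}\mathcal{L}_{\partial_{\theta_j}}$, angular differentiation acting coefficient-wise on the parallel coframe. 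On the other hand $\gamma^j=\lambda_j+\lambda_j^*$ adjoins or deletes the vector $e_j$, and under $U$ this becomes exterior multiplication by $z_j^{-1}\mathrm{d}z_j=2\pi i\,\mathrm{d}\theta_j$ together with its adjoint $\tfrac{1}{2\pi i}\iota_{\partial_{\theta_j}}$ — which is exactly Clifford multiplication by $\mathrm{d}\theta_j$ on $\bigwedge^*\mathrm{T}^*\T^d$ for the flat metric making $U$ an isometry (this is the last assertion of \cref{lem:real_Fourier_transform}, read off one generator at a time). Here the twist $z_I^{-1}$ in the definition of $U$ is precisely what converts the non-parallel coframe $\{\mathrm{d}z_j\}$ into the parallel coframe $\{2\pi i\,\mathrm{d}\theta_j\}$, so that the Clifford action comes out in metric-independent Hodge form rather than carrying the Jacobian of $z\mapsto\theta$.

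Assembling these, $\pos=\sum_{j=1}^d\pos_j\otimes\gamma^j$ is intertwined by $U$ with $\sum_j\tfrac{1}{2\pi i}\mathcal{L}_{\partial_{\theta_j}}\bigl(2\pi i\,\varepsilon(\mathrm{d}\theta_j)+\tfrac{1}{2\pi i}\iota_{\partial_{\theta_j}}\bigr)$. The first summand equals $\sum_j\varepsilon(\mathrm{d}\theta_j)\mathcal{L}_{\partial_{\theta_j}}=\mathrm{d}$ (the two factors of $2\pi i$ cancelling), and the second equals $-\tfrac{1}{4\pi^2}\sum_j\iota_{\partial_{\theta_j}}\mathcal{L}_{\partial_{\theta_j}}=\mathrm{d}^*$ for that same flat metric (here the $i^2=-1$ coming from $(2\pi i)^2$ supplies exactly the minus sign in $\mathrm{d}^*=-\sum_j\iota_{\partial_{\theta_j}}\partial_{\theta_j}$). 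So $U\pos U^{-1}=\mathrm{d}+\mathrm{d}^*$ on the nose. Since $\pos$ is defined on the rapidly decaying $\WC{d}$-valued sequences and $U$ carries this space onto $\Cinf\bigl(\bigwedge^*\mathrm{T}^*\T^d\bigr)$ — the standard core on which $\mathrm{d}+\mathrm{d}^*$ is essentially self-adjoint — the identity on this common core passes to the closures, giving the asserted unitary equivalence. The only delicate point is this bookkeeping of the powers of $2\pi i$ against the particular metric normalisation; there is no conceptual obstacle.

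For the reality statement, the ``real'' structure on $\ell^2(\Z^d)\otimes\WC{d}$ is pointwise conjugation tensored with coefficient-wise conjugation on $\WC{d}$, and $\pos$ commutes with it: each $\pos_j$ has real (integer) matrix entries, and each $\lambda_j,\lambda_j^*$ is real by the proof of \cref{deflem:standard_representation_Cl}. Hence $\pos$ preserves the fixed-point subspace $\ell^2(\Z^d)_\R\otimes\WR{d}$. Since $U$ is by \cref{lem:real_Fourier_transform} an isomorphism of ``real'' Hilbert spaces, it identifies that subspace with $L^2(\bigwedge^*\mathrm{T}^*\T^d)_\R$, so $\mathrm{d}+\mathrm{d}^*$ preserves the real subspace as well; equivalently this is seen directly from the fact that $\mathrm{d}$ and the flat metric are defined over $\R$ and that $z\mapsto\bar z$ is an isometry of the metric. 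Both operators are therefore ``real''.
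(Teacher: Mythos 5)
Your proof is correct and follows essentially the same route as the paper: a direct computation on the plane-wave basis \( \ket{k}\otimes e_I \) identifying \( \sum_j\pos_j\otimes\lambda_j \) with \( \mathrm{d} \) (and its adjoint part with \( \mathrm{d}^* \)), followed by the observation that the \( \gamma^j \) are real and the \( \pos_j \) have real integer eigenvalues, so that the ``real'' structure carried across \( U \) is preserved. The paper's version of the computation works directly with the coframe \( z_j^{-1}\mathrm{d}z_j \) and obtains the \( \mathrm{d}^* \) part simply by taking adjoints, thereby avoiding your \( 2\pi i \) and metric-normalisation bookkeeping (in which, incidentally, your parenthetical formula for \( \mathrm{d}^* \) drops the \( \tfrac{1}{4\pi^2} \) that your displayed identity correctly carries), but the substance is identical and your remark about passing from the common core to the closures is a welcome extra precision.
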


\begin{proof}
By definition, we have
\begin{align*} 
    \pos_j\otimes\lambda_j\left(\ket*{k}\otimes e_{i_1}\wedge\dots\wedge
    e_{i_l}\right)&=k_j\ket*{k}\otimes e_{j}\wedge e_{i_1}\wedge\dots\wedge
    e_{i_l} \\
    &=U^*\left(k_j z^k\cdot
    \frac{\mathrm{d}z_j}{z_j}\wedge\frac{\mathrm{d}z_{i_1}}{z_{i_1}}\wedge\dots\wedge\frac{\mathrm{d}z_{i_l}}{z_{i_l}}\right).
\end{align*}
The de Rham operator \( \mathrm{d} \) satisfies
\begin{align*} 
\mathrm{d}\left(\frac{z^k}{z_{i_1}\dots
z_{i_l}}\mathrm{d}z_{i_1}\wedge\dots\wedge
\mathrm{d}z_{i_l}\right) 
&=\sum_{j=1}^d z_1^{k_1}\cdots (k_jz_j^{k_j-1}\mathrm{d}z_j)\cdots z_d^{k_d}\wedge
\frac{\mathrm{d}z_{i_1}}{z_{i_1}}\wedge\dots\wedge\frac{\mathrm{d}z_{i_l}}{z_{i_l}}.
\\
&=\sum_{j=1}^dk_jz^k\cdot\frac{\mathrm{d}z_j}{z_j}\wedge
\frac{\mathrm{d}z_{i_1}}{z_{i_1}}\wedge\dots\wedge\frac{\mathrm{d}z_{i_l}}{z_{i_l}}.
\end{align*}
So \( U^*\mathrm{d}U \) acts by \( \sum_{j=1}^d\pos_j\otimes\lambda_j \) on
\( \ell^2(\Z^d)\otimes\bigwedge\nolimits^*\C^{d} \). Therefore, \(
\mathrm{d}+\mathrm{d}^* \) is unitarily equivalent to the operator
\[
    \sum_{j=1}^d\pos_j\otimes(\lambda_j+\lambda_j^*)=\sum_{j=1}^d\pos_j\otimes\gamma^j
\]
by the standard representation of \( \CCl_{d,0} \) in 
\cref{deflem:standard_representation_Cl}. The generators \( \gamma^j \)
are ``real'' in \( \CCl_{d,0} \), hence maps \( \bigwedge\nolimits^*\R^{d}
\) to \( \bigwedge\nolimits^*\R^{d} \). The \( j \)-th position operator \(
\pos_j \) preserves \( \ell^2(\Z^d)_\R \) because \(
\pos_j\ket*{k}=k_j\ket*{k} \) and \( k_j \) is real for any \( k\in\Z^d \).
Therefore the operator \( \pos=\sum_{j=1}^d\pos_j\otimes\gamma^j \)
preserves \( \ell^2(\Z^d)_\R\otimes\bigwedge\nolimits^*\R^{d} \).
\end{proof}

\subsection{Kasparov theory for real \Cst-algebras}
\label{sec:real_Kasparov_theory}
In his seminal work \cite{Kasparov:K-functor_extension}, Kasparov
introduced a bivariant K-theory for \( \Z/2 \)-graded \Cst-algebras.
Elements in his bivariant K-theory group \( \KK(A,B) \) may be viewed as
``generalised homomorphisms'' from \( A \) to \( B \). Kasparov's
constructions are general enough to be transferred to the setting of real
\Cst-algebras
(cf.~\cite{Bourne-Carey-Rennie:NC_framework_topological_insulators}*{Appendix
A}).

For every pair of \( \Z/2 \)-graded real \Cst-algebras \( A \) and \( B \),
there is an abelian group (the so-called Kasparov group) \( \KK(A,B) \). It
generalises the real K-theory of ungraded real
\Cst-algebras (that is, \( \KO \)-theory) in the following way: 
if \( A=\Cl_{n,0} \) and \( B \) is an ungraded real \Cst-algebra, then    
there is an isomorphism
(cf.~\citelist{\cite{Kasparov:K-functor_extension}*{Section 5, Theorem 7} 
\cite{Bourne-Carey-Lesch-Rennie:KO-valued_spectral_flow}*{Lemma 9.1}}):
\begin{equation}\label{eq:ABS_construction}
  \KO_n(B)\simeq\KK(\Cl_{n,0},B).
\end{equation}

Given \(
\Z/2 \)-graded real \Cst-algebras \( A_1,A_2,B_1,B_2,D \), there is a 
natural group homomorphism
\[ 
  \times_D\colon \KK(A_1,B_1\otimes D)\times\KK(A_2\otimes D,B_2)\to\KK(A_1\otimes A_2,B_1\otimes
  B_2)
\]
called the \emph{Kasparov product},
cf.~\cite{Kasparov:K-functor_extension}*{Section 4, Theorem 4}.
We shall need the following formal constructions with the 
Kasparov product, and refer to
\cites{Kasparov:K-functor_extension,Bourne-Carey-Rennie:NC_framework_topological_insulators}
for details.

\subsubsection*{Functoriality} Let \( f\colon A\to B \) and \( g\colon B\to C
\) be \st-homomorphisms between real \Cst-algebras. Then \( f \) gives an
element \( [f]\in\KK(A,B) \) and \( [g]\in\KK(B,C) \). Moreover, we have
\[ 
  [f]\times_B[g]=[g\circ f]\in\KK(A,C).
\]

\subsubsection*{Graded \( \KO_*(\R) \)-module structure of \( \KO_*(B) \)}
Let \( A_1=\Cl_{n,0} \), \( A_2=\Cl_{m,0} \), \( B_1=B_2=D=\R \). 
Then the Kasparov product gives a group homomorphism  
\[ 
  \times_\R\colon\underbrace{\KK(\Cl_{n,0},\R)}_{\simeq\KO_n(\R)}\times\underbrace{\KK(\Cl_{m,0},\R)}_{\simeq\KO_m(\R)}\to\underbrace{\KK(\Cl_{m+n,0},\R)}_{\simeq\KO_{m+n}(\R)},
\]
where the isomorphisms are given as in \eqref{eq:ABS_construction}. This 
turns \( \KO_*(\R)\defeq\bigoplus_{n}\KO_n(\R) \) into a graded
commutative ring.

Now let 
\( A_1=\Cl_{n,0} \), \( A_2=\Cl_{m,0} \), \( B_1=D=\R \) and \( B_2=B \) be
an ungraded real \Cst-algebra. 
Then the Kasparov product gives a group homomorphism  
\[ 
  \times_\R\colon\underbrace{\KK(\Cl_{n,0},\R)}_{\simeq\KO_n(\R)}\times\underbrace{\KK(\Cl_{m,0},B)}_{\simeq\KO_m(B)}\to\underbrace{\KK(\Cl_{m+n,0},B)}_{\simeq\KO_{m+n}(B)},
\]
which turns \( \KO_*(B)\defeq\bigoplus_{n}\KO_n(B) \) into a graded
commutative module over \( \KO_*(\R) \).

\subsubsection*{Index pairing}
Let \( A_1=\Cl_{n,0} \), \( A_2=\Cl_{0,d} \), \( B_1=B_2=\R \), then
the Kasparov product gives a group homomorphism
\[ 
\times_D\colon\underbrace{\KK(\Cl_{n,0},D)}_{\simeq\KO_n(D)}\times\KK(D\otimes\Cl_{0,d},\R)\to\underbrace{\KK(\Cl_{n,d},\R)}_{\simeq\KO_{n-d}(\R)}.
\]
Thus every element \( \alpha\in\KK(D\otimes\Cl_{0,d},\R) \) induces a group
homomorphism
\begin{equation}\label{eq:index_pairing}
  \times_D\alpha\colon\KO_n(D)\to\KO_{n-d}(\R).
\end{equation}
Moreover, since the Kasparov product is functorial, \( \alpha \) actually
induces an
\( \KO_*(\R) \)-module homomorphism 
\( \KO_*(D)\to\KO_{*-d}(\R) \).

The group homomorphism \eqref{eq:index_pairing}
induced by an element \( \alpha\in\KK(D\otimes\Cl_{0,d},\R) \)
is called an \emph{index pairing}. Such an element \( \alpha \) 
can be represented by a real spectral triple. 

\begin{definition}
Let \( A \) be a \( \Z/2 \)-graded real \Cst-algebra. A real spectral
triple \( (\mathcal{A},\mathcal{H},D) \) over \( A \)  consists of: 
\begin{itemize}
\item A \( \Z/2 \)-graded real Hilbert space \( \mathcal{H} \), together
with a grading-preserving \st-representation 
\( \varphi\colon A\to\Bdd(\mathcal{H}) \); 
\item A dense \st-subalgebra \( \mathcal{A}\subseteq A \);
\item An unbounded, self-adjoint odd operator
\( D\colon\dom D\subseteq \mathcal{H}\to\mathcal{H} \);
\end{itemize}
such that:
\begin{itemize}
\item \( \varphi(a)(1+D^2)^{-1} \) is compact for all \( a\in\mathcal{A} \);
\item For every \( a\in\mathcal{A} \), \( \varphi(a) \) 
maps \( \dom D \) into \( \dom D \); and the graded commutator
\( [D,\varphi(a)] \) extends to a bounded operator on \( \mathcal{H} \).
\end{itemize}
\end{definition}

Let \( \xi\defeq (\mathcal{B},\mathcal{H},D) \) be a 
real spectral triple over a real
\Cst-algebra \( B \), where \( B \) is represented on \( \mathcal{H} \) via
\( \varphi\colon B\to\Bdd(\mathcal{H}) \). 
Then \( \xi \) represents a class \( [\xi]\in\KK(B,\R) \).  

Let \( f\colon A\to B \) be a
\st-homomorphism. Then the \st-algebra \( \mathcal{A}\defeq
f^{-1}(\mathcal{B}) \) is dense in \( A \), and   
\( f^*\xi\defeq (\mathcal{A},\mathcal{H},D) \) is a real spectral triple
over \( A \), where \( A \) is
represented on \( \mathcal{H} \) via \( \varphi\circ f\colon
A\to\Bdd(\mathcal{H}) \).

We call \( f^*\xi\defeq (\mathcal{A},\mathcal{H},D) \) 
in the above example, the
\emph{pullback} spectral triple of \( (\mathcal{B},\mathcal{H},D) \) along
\( f \). Functoriality of Kasparov product states the following: 

\begin{lemma}\label{lem:Kasparov_product_functoriality}
Let \( f\colon A\to B \) be a \st-homomorphism between real 
\Cst-algebras. Let \( \xi \) be a real spectral
triple which represents a class \( [\xi]\in\KK(B\otimes\Cl_{0,d},\R) \).
Then the pullback spectral triple \( f^*\xi \) represents the Kasparov
product \( [f]\times_B[\xi]\in\KK(A\otimes\Cl_{0,d},\R) \). Moreover, the
following diagram commutes\textup{:}
\[ \begin{tikzcd}
\KK(\Cl_{n,0},A) \arrow[rr, "{\times_A[f^*\xi]}"] \arrow[rd, "{\times_A[f]}"'] && \KK(\Cl_{n,d},\R) \\
& \KK(\Cl_{n,0},B) \arrow[ru, "{\times_B[\xi]}"'] 
\end{tikzcd} \]
where all arrows are given by taking Kasparov products.
\end{lemma} 

\subsection{Position spectral triples}
\label{sec:position_spectral_triple}
In the following, we focus on a class of spectral triples, which we
call \emph{position spectral triples} 
as they are built from the position
operators on the corresponding Hilbert space of a 
discrete set \( \Lambda \).
 
\begin{definition}\label{def:position_spectral_triple}
Let \( \Lambda\subseteq\R^d \) be a countable discrete subset.
A \emph{position spectral triple} associated to \( \Lambda \) 
is a real spectral triple of the form
\begin{equation}\label{eq:position_spectral_triple}
\xi_\Lambda\defeq\left(\mathcal{A}\otimes\Cl_{0,d},\quad\ell^2(\Lambda)\otimes\mathcal{K}\otimes\bigwedge\nolimits^*\R^{d},\quad\pos\defeq
\sum_{j=1}^d\pos_j\otimes\id_\mathcal{K}\otimes\gamma^j\right),
\end{equation}
where:
\begin{itemize}
\item \( A \) is a real \Cst-algebra, 
which carries a \st-representation \( \varphi\colon
A\to\Bdd(\ell^2(\Lambda)\otimes\mathcal{K}) \) for some separable real
Hilbert space \( \mathcal{K} \); \( \mathcal{A}\subseteq A \) is a dense
\st-subalgebra; \item \( A\otimes\Cl_{0,d} \) is represented on \(
\ell^2(\Lambda)\otimes\mathcal{K}\otimes\bigwedge\nolimits^*\R^{d} \)
through the tensor product of \( \varphi \) and the standard representation
of \( \Cl_{0,d} \);   
\item \( \pos_j \) is the \( j \)-th position operator on \(
\ell^2(\Lambda) \), given by
\[ 
    (\pos_j\phi)(x)\defeq x_j\phi(x),\qquad \phi\in\Cc(\Lambda),\;
    x=(x_1,\dots,x_d)\in\Lambda\subseteq\R^d;
\]
\item \( \gamma_1,\dots,\gamma_d \) are the generators of \( \Cl_{d,0} \),
represented on \( \bigwedge\nolimits^*\R^{d} \) via the standard
representation. 
\end{itemize}
The condition \( \xi_\Lambda \) being a real spectral triple says that:
\begin{itemize}
\item \(
\varphi(a)(1+\pos^2)^{-1}\in\Cpt(\ell^2(\Lambda)\otimes\mathcal{K}\otimes\bigwedge\nolimits^*\R^{d})
\) for all \( a\in\mathcal{A} \);
\item For every \( a\in\mathcal{A} \), \( \varphi(a) \) maps \( \dom\pos \)
into \( \dom\pos \), and \( [\pos,\varphi(a)] \) extends to a bounded
operator on \(
\ell^2(\Lambda)\otimes\mathcal{K}\otimes\bigwedge\nolimits^*\R^{d} \).  
\end{itemize}
\end{definition}

We note the occurence of an 
auxiliary separable Hilbert space \( \mathcal{K} \). 
The physical meaning of \( \mathcal{K} \) is given in
\cref{rmk:fundamental_domain} for \( \Lambda \)   
a Delone set (cf.~\cref{def:Delone_set}).

The author thanks the anonymous referee for pointing out the following
result:

\begin{lemma}\label{lem:automatic_local_compactness}
Every position spectral triple is \emph{locally compact}. That is, the
matrix elements
\[ 
  \varphi(a)_{x,y}\defeq \braketvert{x}{\varphi(a)}{y}
\]
are compact for all \( a\in \mathcal{A} \) and \( x,y\in\Lambda \).   
\end{lemma}

\begin{proof}
By assumption of a real spectral triple, we have that
\[ 
  \varphi(a)\left(1+\left(\sum_{j=1}^d\pos_j\otimes\id_\mathcal{K}\otimes\gamma^j\right)^2\right)^{-1}\in\Cpt\left(\ell^2(\Lambda)\otimes\mathcal{K}\otimes\bigwedge\nolimits^*\R^{d}\right)
\]
for all \( a\in \mathcal{A} \). Therefore,
\[ 
 \braketvert*{x}{
 \varphi(a)\left(1+\left(\sum_{j=1}^d\pos_j\otimes\id_\mathcal{K}\otimes\gamma^j\right)^2\right)^{-1}}{y}=\left(1+\sum_{j=1}^dy_j^2\right)^{-1}\varphi(a)_{x,y}\otimes\id_{\bigwedge\nolimits^*\R^{d}}
\]
is a compact operator on \( \mathcal{K}\otimes\bigwedge\nolimits^*\R^{d} \). This forces \(
\varphi(a)_{x,y}\in\Cpt(\mathcal{K}) \). 
\end{proof}

\subsubsection{The fundamental class of \( \Cst(\Z^d)_\R\otimes\Cl_{0,d} \)}
\label{sec:fundamental_class}
The last goal of this section is to construct a position spectral triple \(
\xi_{\Z^d,N} \) over \( \Cst(\Z^d)_\R\otimes\Mat_N(\R)\otimes\Cl_{0,d} \),
where \( \Cst(\Z^d)_\R \) is the
real group \Cst-algebra of \( \Z^d \). Moreover, we shall show that taking
the Kasparov product with the
class \( [\xi_{\Z^d,N}]\in\KK(\Cst(\Z^d)_\R\otimes\Cl_{0,d},\R) \)
gives a surjective homomorphism
\[ 
  \KK(\Cl_{d,0},\Cst(\Z^d)_\R)\to\KK(\Cl_{d,d},\R)\simeq\Z.
\]

Let \( \Cc(\Z^d)_\R \) be the real convolution 
algebra of \( \Z^d \), whose elements are
real-valued, finitely supported functions \( f\colon\Z^d\to\R
\), and carries the following convolution product and involution:
\[ 
    (f*g)(x)\defeq\sum_{x_1+x_2=x}f(x_1)g(x_2),\quad
    f^*(x)\defeq f(-x).
\]

The left multiplication of \( \Cc(\Z^d)_\R \) on itself 
extends to a injective \st-representation
\[
  \lambda\colon\Cc(\Z^d)_\R\to\Bdd(\ell^2(\Z^d,\R)).
\]
The closure of \( \lambda(\Cc(\Z^d)_\R) \) in the real 
\Cst-algebra \( \Bdd(\ell^2(\Z^d,\R)) \) is the real group \Cst-algebra 
\( \Cst(\Z^d)_\R \).

Fix a natural number \( N \) and extend \( \lambda \) entrywise to   
\( \Cst(\Z^d)_\R\otimes\Mat_N(\R) \). This gives a representation
\[
  \lambda\otimes\id_N\colon\Cst(\Z^d)_\R\otimes\Mat_N(\R)\to\Bdd(\ell^2(\Z^d,\R)\otimes\R^N)\simeq\Bdd(\ell^2(\Z^d,\R^N)).
\]
To get a position spectral triple over \( \Mat_N(\Cst(\Z^d)_\R) \), we need
another separable real Hilbert space \( \mathcal{K} \). 
Let \( \R^N\hookrightarrow \mathcal{K} \) be any isometric
embedding. Then it induces a corner embedding 
\( e_N\colon \Mat_N(\R)\hookrightarrow
\Cpt(\mathcal{K})\) and gives an invertible element in \(
\KK(\Mat_N(\R),\R) \). We represent \(
\Cst(\Z^d)_\R\otimes\Mat_N(\R) \) on \(
\ell^2(\Z^d)\otimes\mathcal{K} \) via
\( \lambda\otimes e_N \). This gives a position spectral triple 
\begin{equation}\label{eq:spectral_triple_group_Z^d}
    \xi_{\Z^d,N}\defeq\left(\Mat_N(\Cc(\Z^d)_\R)\otimes\Cl_{0,d},\quad
    \ell^2(\Z^d)\otimes\mathcal{K}\otimes\bigwedge\nolimits^*\R^{d},\quad
    \sum_{j=1}^d\pos_j\otimes \id_\mathcal{K}\otimes\gamma^j\right).
\end{equation}
which represents a class \(
[\xi_{\Z^d,N}]\in\KK(\Mat_N(\Cst(\Z^d)_\R)\otimes\Cl_{0,d},\R) \). 
This is the same class for each \( N \) (hence also for \( N=1 \)) because
\( [e_N]\in\KK(\Mat_N(\R),\R)) \) is invertible. 

\begin{theorem}\label{thm:fundamental_class_Zd}
Taking the Kasparov product with \( [\xi_{\Z^d,N}] \), 
\[ 
  \times_{\Mat_N(\Cst(\Z^d)_\R)}[\xi_{\Z^d,N}]\colon
  \KK(\Cl_{d,0},\Mat_N(\Cst(\Z^d)_\R))\to\KK(\Cl_{d,d},\R)\simeq\Z
\]
is a surjective group homomorphism.
\end{theorem}

\begin{proof}
The result is well-known, cf.~\cite{Kasparov:K-functor_extension}*{Section
5, Theorem 7}, whilst we need to appeal to a variant of KK-theory for
``real'' \Cst-algebras called KKR-theory (also defined by Kasparov in
\cite{Kasparov:K-functor_extension}).

Let \( (\T^d,\tau\colon z\mapsto\overline{z}) \) be the
``real'' \( d \)-torus. Then Kasparov showed that the  ``real'' spectral
triple
\begin{equation}\label{eq:HdR_spectral_triple}
  \left(\Cont(\T^d,\CCl(\T^d)),\;
  L^2(\bigwedge\nolimits^*\mathrm{T}^*\T^d),\;
  \mathrm{d}+\mathrm{d}^*\right)
\end{equation}
represents the KKR-thereotic
fundamental class in \( \KKR(\Cont(\T^d,\CCl(\T^d)),\C) \). That is, there
exists a class \( \beta\in\KKR(\C,\Cont(\T^d,\CCl(\T^d))) \) (``the Bott
class'') such that
\[ 
  \beta\times_{\Cont(\T^d,\CCl(\T^d))}\alpha=1\in\KKR(\C,\C).
\]
Now we pass from KKR to KK-theory of real \Cst-algebras. By
\cref{lem:real_Fourier_transform}, the Fourier transform
\eqref{eq:real_Fourier_transform} maps 
\( \Cont(\T^d,\CCl(\T^d)) \) to \( \Cst(\Z^d)\otimes\CCl_{0,d} \), where \(
\Cst(\Z^d) \) carries the real structure \(
\mathfrak{r}(f)(x)=\overline{f(x)} \); and the Hodge--de Rham operator \(
\mathrm{d}+\mathrm{d}^* \) to the position operator \(
\pos=\sum_{j=1}^d\pos_j\otimes\gamma^j \), which is a ``real'' operator. 

There is a natural
isomorphism \( \KKR(A,B)\simeq\KK(A^{\mathfrak{r}_A},B^{\mathfrak{r}_B})
\) (cf.~\cite{Bourne-Kellendonk-Rennie:Cayley_transform}*{Section 2}). 
Under this isomorphism, we find that the Hodge--de Rham spectral triple
\eqref{eq:HdR_spectral_triple}, after composing with a rank-one corner 
embedding \( \R\hookrightarrow\Cpt(\mathcal{K}) \),
is mapped to the position spectral triple \(
\xi_{\Z^d,1}\). Hence its Kasparov product with the Bott class in \(
\KK(\R,\Cst(\Z^d)_\R\otimes\Cl_{0,d}) \) equals \(
1\in\Z\simeq\KK(\Cl_{d,d},\R)
\).
\end{proof}

\begin{remark}
The KKR-class of the Hodge--de Rham spectral triple
\eqref{eq:HdR_spectral_triple} is also called the Dirac element
(cf.~\cite{Kasparov:Equivariant_KK-theory}*{Definition--Lemma
4.2}). It is the KK-theoretic analogue of the fundamental class in
cohomology. We refer to
\cite{Lord-Rennie-Varilly:Riemannian_manifolds_in_NCG}*{Section 4.3} for
the notion of fundamental classes in Kasparov theory and noncommutative
geometry.
\end{remark}

\section{Models of aperiodic topological insulators}

\addtocontents{toc}{\SkipTocEntry}
\subsection*{Convention}
From now on, all Hilbert spaces and \Cst-algebras are assumed to be
real. All \Cst-algebras excepted for the Clifford algebras (and their
tensor products with other \Cst-algebras) are assumed to be ungraded.

In this section, 
we shall describe the groupoid model (cf.~\cref{sec:groupoid_model}) and 
Roe \Cst-algebra model (cf.~\cref{sec:Roe_Cst-algebra_model}) 
of topological insulators on aperiodic lattices. 
A short discussion on the \emph{uniform} Roe \Cst-algebras is given in 
\cref{sec:uniform_Roe_Cst-algebra}.

We assume that the aperiodic
lattice is described by a Delone set \( \Lambda\subseteq\R^d \), i.e.~a
uniformly discrete and relatively dense subset of \( \R^d \), defined
as follows.

\begin{definition}\label{def:Delone_set}
Let \( \Lambda\subseteq\R^d \) be a discrete infinite set. Fix \( 0<r<R \). 
Then \( \Lambda \) is called
\begin{itemize}
\item \emph{\( r \)-uniformly discrete}, if 
\( \#(\Ball(x,r)\cap\Lambda)\leq 1 \) for all \( x\in\R^d \);
\item \emph{\( R \)-relatively dense}, if 
\( \#(\Ball(x,R)\cap\Lambda)\geq 1 \) for all \( x\in\R^d \);
\item \emph{\( (r,R) \)-Delone}, if \( \Lambda \) is both \( r \)-uniformly
discrete and \( R \)-relatively dense. 
\end{itemize}
Denote the collection of \( (r,R) \)-Delone sets in \( \R^d \) by
\( \Del_{(r,R)}(\R^d) \). 
\end{definition}

\begin{remark}\label{rmk:fundamental_domain}
Given a Delone set \( \Lambda \), equipped with the subspace metric \(
d(x,y)\defeq\abs*{x-y} \). The Voronoi tiling
(cf.~\cref{fig:Voronoi_tiling}) decomposes \( \R^d \) into
tiles lablled by \( x\in\Lambda \), and thus decomposes the Hilbert
space \( L^2(\R^d) \) into a direct sum:
\begin{equation}\label{eq:decompose_L^2(R^d)}
L^2(\R^d)\simeq\bigoplus_{x\in\Lambda}L^2(T_x),
\end{equation}
where \( T_x \) is (interior of) the tile associated to the point \(
x\in\Lambda \). If \( \Lambda \) is a periodic lattice, then all \( T_x
\)'s are translated copies of the fundamental domain of \( \Lambda \),
where \( \Lambda \) is identified with the discrete, cocompact subgroup of
\( \R^d \) generated by translations by vectors \( x\in\Lambda \).
In this sense, we may regard \( T_x \)'s as a generalised version 
of ``fundamental domains'' of the
dynamics on the aperiodic lattice \( \Lambda \). 

We may further identify all \( L^2(T_x)
\)'s with a fixed, separable Hilbert space \(
\mathcal{K} \). Thus an operator 
\( T\in\Bdd(L^2(\R^d))\simeq\Bdd(\ell^2(\Lambda)\otimes\mathcal{K}) \) 
can be described by its matrix elements \( (T_{x,y})_{x,y\in\Lambda} \),
where
\[ 
T_{x,y}\defeq\braketvert*{x}{T}{y}\in\Bdd(\mathcal{K}).
\]
\end{remark}

\begin{figure}[ht]
\centering
\begin{minipage}[b]{0.4\linewidth}
\centering
\includegraphics[width=\textwidth]{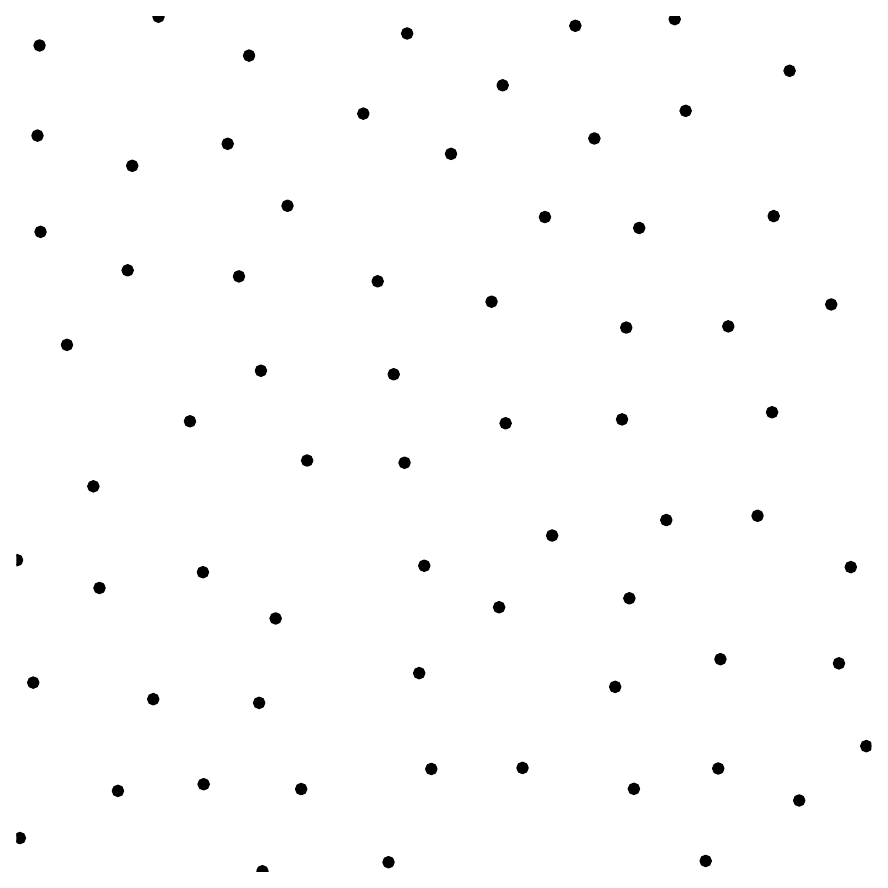}
\end{minipage}
\hspace{1cm}
\begin{minipage}[b]{0.4\linewidth}
\centering
\includegraphics[width=\textwidth]{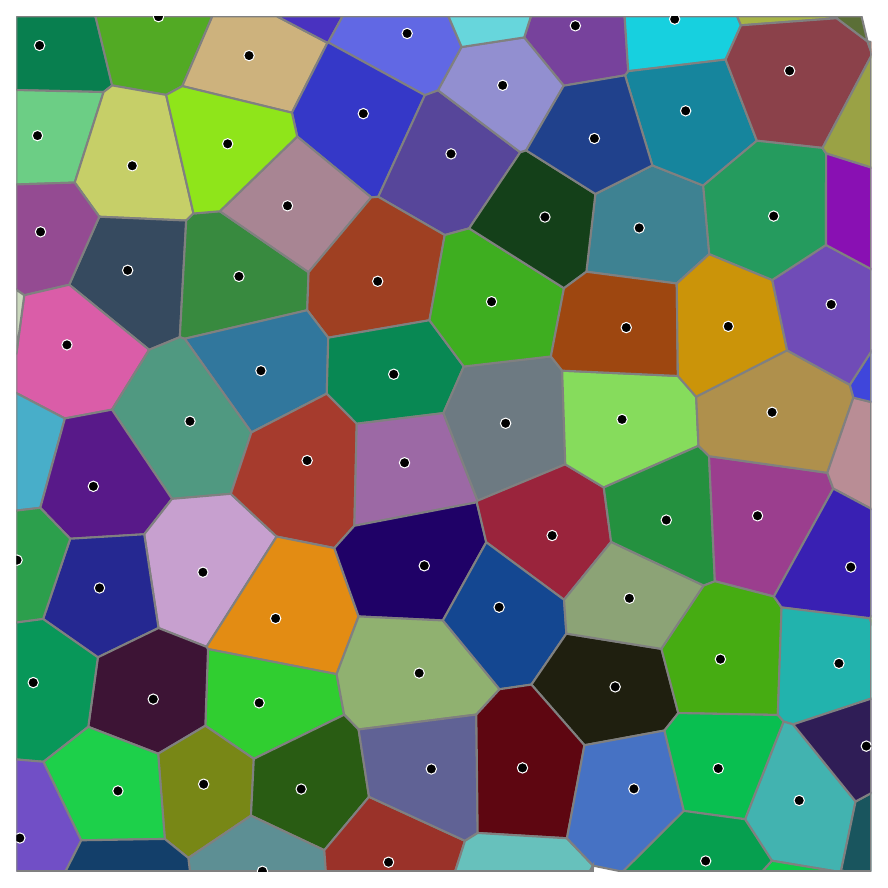}
\end{minipage}
\caption{Voronoi tiling associated to a Delone set}
\label{fig:Voronoi_tiling}
\end{figure}

\subsection{Groupoid \Cst-algebra of a Delone set}
\label{sec:groupoid_model}
In the groupoid approach, a Delone set \( \Lambda \) is considered as
a point in the space of infinite, discrete subsets in \( \R^d \). Its
dynamics is given by translation of the point set by vectors in \( \R^d \).   
More precisely, we identify a Delone set \( \Lambda \) with its
corresponding pure point
measure \( \sum_{x\in\Lambda}\delta_x \) on \( \R^d \), thus identify the
space of all \( (r,R) \)-Delone sets as a subspace of \(
\mathcal{M}(\R^d)=\Cc(\R^d)' \), the space of all measures on \( \R^d \),
equipped with the weak\st-topology. Following
\cite{Bellissard-Herrmann-Zarrouati:Hulls_aperiodic_solids}, we write:
\begin{itemize}
\item \( \mathcal{M}(\R^d) \) for the space of all measures on \( \R^d \);
\item \( \operatorname{QD}(\R^d) \) for the set of all pure point measures
\( \nu \) on \( \R^d \) such that \( \nu(\{x\})\in\N \) for all \( x\in\R^d \);
\item \( \operatorname{UD}_r(\R^d) \) for the subset of \( \operatorname{QD}(\R^d) \)
such that \( \nu(\Ball(x;r))\leq 1 \) for all \( x\in\R^d \).
\end{itemize}

In the context below, we shall not distinguish between a discrete set and
its corresponding pure point measure.

\begin{proposition}[\cite{Bellissard-Herrmann-Zarrouati:Hulls_aperiodic_solids}*{Theorem
1.5, Section 2.1}]\label{prop:spaces_of_measures}
Using the notation above and fix \( 0<r<R \), we have the following\textup{:}
\begin{enumerate}
\item there are inclusions of \emph{closed} sets
\[
    \Del_{(r,R)}(\R^d)\subsetneqq\operatorname{UD}_r(\R^d)\subsetneqq\operatorname{QD}(\R^d)\subsetneqq\mathcal{M}(\R^d)\textup{;}
\]
\item the space \( \operatorname{QD}(\R^d) \) is complete and metrisable\textup{;}
\item the space \( \operatorname{UD}_r(\R^d) \) is compact.
\end{enumerate}
As a corollary of \textup{(1)--(3)}, \( \Del_{(r,R)}(\R^d) \) is a compact
metrisable space.
\end{proposition}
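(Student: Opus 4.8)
The plan is to handle all three items at once inside the cone \( \mathcal{M}_+(\R^d)\subseteq\mathcal{M}(\R^d) \) of positive Radon measures, equipped with the vague (weak-\st) topology, and to deduce everything from three soft inputs. First, \( \mathcal{M}_+(\R^d) \) is closed in \( \mathcal{M}(\R^d) \) (a vague limit of positive functionals is positive), and, with the vague topology, it is a Polish space; hence its vaguely closed subspaces are completely metrisable. (This last fact can also be quoted wholesale from \cite{Bellissard-Herrmann-Zarrouati:Hulls_aperiodic_solids}*{Theorem 1.5}.) Second, a set \( \mathcal{F}\subseteq\mathcal{M}_+(\R^d) \) is vaguely relatively compact if and only if \( \sup_{\mu\in\mathcal{F}}\mu(K)<\infty \) for every compact \( K\subseteq\R^d \). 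Third, I will use the portmanteau inequalities for the vague topology: if \( \mu_n\to\mu \) vaguely, then \( \mu(U)\le\liminf_n\mu_n(U) \) for open, relatively compact \( U \), \( \limsup_n\mu_n(K)\le\mu(K) \) for compact \( K \), and \( \mu_n(A)\to\mu(A) \) whenever \( A \) is relatively compact Borel with \( \mu(\partial A)=0 \).

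First I would prove the closedness assertions of item (1). That \( \operatorname{UD}_r(\R^d) \) is vaguely closed is immediate: the defining inequality \( \nu(\Ball(x,r))\le 1 \) passes to vague limits since \( \mu(\Ball(x,r))\le\liminf_n\nu_n(\Ball(x,r))\le 1 \) for every \( x \). For \( \operatorname{QD}(\R^d) \) I would argue that a vague limit \( \mu \) of purely atomic measures with masses in \( \N \) is again of that form: fixing \( x \), the set of radii \( \rho \) with \( \mu(\partial\Ball(x,\rho))\neq 0 \) is countable, so along the radii \( \rho \) in the co-countable complement one has \( \nu_n(\Ball(x,\rho))\to\mu(\Ball(x,\rho)) \); the left-hand side lies in \( \N\cup\{\infty\} \), so the limit lies in \( \N \) whenever finite, and letting such \( \rho\downarrow 0 \) forces \( \mu(\{x\})\in\N \) together with \( \mu(\Ball(x,\rho))=\mu(\{x\}) \) for all small such \( \rho \); this makes \( \supp\mu \) discrete, so \( \mu \) has no continuous part and all its atoms lie in \( \N \), i.e.\ \( \mu\in\operatorname{QD}(\R^d) \). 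For \( \Del_{(r,R)}(\R^d) \) I would use the closed-ball form of the covering condition, \( \nu(\overline{\Ball(x,R)})\ge 1 \) for all \( x \) (which differs from \cref{def:Delone_set} only by a harmless adjustment of \( R \)), and pass to limits via \( \mu(\overline{\Ball(x,R)})\ge\limsup_n\nu_n(\overline{\Ball(x,R)})\ge 1 \). The strictness of all three inclusions is witnessed by easy examples: \( \tfrac12\delta_0\in\mathcal{M}_+(\R^d)\setminus\operatorname{QD}(\R^d) \), \( 2\delta_0\in\operatorname{QD}(\R^d)\setminus\operatorname{UD}_r(\R^d) \), and any uniformly discrete set with unbounded gaps lies in \( \operatorname{UD}_r(\R^d)\setminus\Del_{(r,R)}(\R^d) \).

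Finally I would assemble (2), (3) and the corollary. For (2): \( \operatorname{QD}(\R^d) \) is a vaguely closed subspace of the Polish space \( \mathcal{M}_+(\R^d) \), hence complete and metrisable. For (3): besides being vaguely closed, \( \operatorname{UD}_r(\R^d) \) is vaguely bounded — covering a compact \( K \) by finitely many balls \( \Ball(x_i,r/2)\subseteq\Ball(x_i,r) \) of \( \nu \)-mass at most \( 1 \) gives \( \nu(K)\le\#\{x_i\} \) uniformly in \( \nu\in\operatorname{UD}_r(\R^d) \) — so by the compactness criterion it is vaguely relatively compact, hence compact, and it is metrisable as a subspace of a Polish space. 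The corollary is then immediate, since \( \Del_{(r,R)}(\R^d) \) is a closed subset of the compact metrisable space \( \operatorname{UD}_r(\R^d) \). I expect the one genuinely delicate point to be the vague closedness of \( \operatorname{QD}(\R^d) \) — i.e.\ ruling out that integer atoms ``dissolve'' into a continuous part — which is exactly where the argument over radii with \( \mu(\partial\Ball(x,\rho))=0 \) does the work; everything else is formal.
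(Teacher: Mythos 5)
Your argument is correct, and it is worth noting that the paper itself offers no proof of this proposition --- it is imported wholesale from Bellissard--Herrmann--Zarrouati --- so what you have written is a genuine reconstruction rather than a variant of an argument in the text. Your three ``soft inputs'' (Polishness of \( \mathcal{M}_+(\R^d) \) in the vague topology, the relative-compactness criterion via uniform local boundedness, and the portmanteau inequalities) are all standard and are used correctly; in particular, working inside the metrisable cone \( \mathcal{M}_+ \) legitimises your use of sequences, and the closedness of \( \mathcal{M}_+ \) in \( \mathcal{M}(\R^d)=\Cc(\R^d)' \) upgrades sequential closedness in \( \mathcal{M}_+ \) to weak\st-closedness in \( \mathcal{M}(\R^d) \). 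The step you correctly single out as delicate --- that integer atoms cannot dissolve into a continuous part in the vague limit --- is handled properly: restricting to radii \( \rho \) with \( \mu(\partial\Ball(x,\rho))=0 \) gives \( \mu(\Ball(x,\rho))\in\N\cup\{0\} \) on a co-countable, hence dense, set of radii, and monotonicity then forces \( \mu(\Ball(x,\rho)\setminus\{x\})=0 \) for small \( \rho \), from which pure atomicity with integer masses and discrete support follows by a finite covering of any compact set.

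One point deserves emphasis because it touches the paper's own \cref{def:Delone_set}: your switch to the closed-ball form \( \nu(\overline{\Ball(x,R)})\geq 1 \) of relative denseness is not merely cosmetic but necessary. With the open-ball convention as literally stated in \cref{def:Delone_set}, the set \( \Del_{(r,R)}(\R^d) \) is \emph{not} vaguely closed: in \( d=1 \) the lattices \( (2R-\tfrac1n)\Z \) are \( (r,R) \)-Delone for the open-ball condition and converge vaguely to \( 2R\Z \), for which the open interval \( (0,2R)=\Ball(R,R) \) contains no point. The closed-ball version is closed under vague limits via \( \mu(\overline{\Ball(x,R)})\geq\limsup_n\nu_n(\overline{\Ball(x,R)})\geq 1 \), exactly as you argue, and differs from the open-ball version only by passing to any \( R'>R \); this is the convention under which the cited result of Bellissard--Herrmann--Zarrouati, and hence the corollary that the hull \( \Omega_\Lambda \) is compact metrisable, should be read. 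Everything else in your write-up --- the uniform bound \( \nu(K)\leq\#\{x_i\} \) giving vague relative compactness of \( \operatorname{UD}_r(\R^d) \), the Polishness of the closed subspace \( \operatorname{QD}(\R^d) \), and the witnesses for strictness of the inclusions --- is correct as stated.
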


The space \( \Del_{(r,R)}(\R^d) \) carries an action of \( \R^d \)  by
translation. That is, 
given \( \Lambda\in\Del_{(r,R)}(\R^d) \), viewed as a discrete point set in
\( \R^d \); and given \( a\in\R^d \), then \( \Lambda+a \) is the discrete
point set consisting of \( x+a \) for all \( x\in\Lambda \). This coincides
with the \( \R^d \)-action on the space of measures. 

From now on we shall fix a Delone set \( \Lambda \). 
Let \( \Omega_\Lambda \) be the \emph{closure} of the orbit of \( \Lambda
\) under the \( \R^d \)-action. Then \( \Omega_\Lambda \) is a disjoint
union of orbits, hence closed under the \( \R^d \)-action. This allows us
to restrict the topological dynamical system 
\( \Del_{(r,R)}(\R^d)\curvearrowleft\R^d \) to the
smaller space \( \Omega_\Lambda \) and construct the \emph{action groupoid} 
\( \Omega_\Lambda\rtimes\R^d \). The elements are of the form
\( (\omega,a)\in\Omega_{\Lambda}\times\R^d \) and the structure
maps are given by
\begin{alignat*}{3}
r(\omega,a)&=\omega,&&\quad
s(\omega,a)&&=\omega-a,\\
(\omega,a)\cdot(\omega-a,b)&=(\omega,a+b),&&\quad
(\omega,a)^{-1}&&=(\omega-a,-a).
\end{alignat*}
An \emph{abstract transversal} of a topological groupoid 
\( \mathcal{G} \) is a closed subset
\( X\subseteq\mathcal{G}^{0} \) such that \( X \) meets every \( \mathcal{G} \)-orbit
of \( \mathcal{G}^{0} \); and the restrictions of the range and source maps 
to \( \mathcal{G}_X\defeq s^{-1}(X) \) are open,
cf.~\citelist{\cite{Bourne-Mesland:Topological_phases}*{Definition 2.11},
\cite{Muhly-Renault-Williams:Equivalence_groupoids}*{Example 2.7}}. 
Retricting a topological
groupoid \( \mathcal{G} \) to an abstract transversal \( X \) yields a
Morita equivalent groupoid \( \mathcal{G}_X^X\defeq s^{-1}(X)\cap r^{-1}(X) \). 
The Morita equivalence is
implemented by the space \( \mathcal{G}_X \).

\begin{deflem}[\cite{Bourne-Mesland:Topological_phases}*{Proposition 2.14}]
The action groupoid \( \Omega_\Lambda\rtimes\R^d \) admits the following
abstract transversal
\[ \Omega_0\defeq\set{\omega\in\Omega_{\Lambda}\mid 0\in\omega}. \]
Define the groupoid \( \mathcal{G}_\Lambda \) as the restriction of \(
\Omega_{\Lambda}\rtimes\R^d \) to the transversal \( \Omega_0 \).
Then \( \mathcal{G}_\Lambda \) is an \'etale groupoid,
which is Morita equivalent to \( \Omega_{\Lambda}\rtimes\R^d \).
\end{deflem}

\subsubsection{The regular representation}
\label{sec:regular_representation}
The reduced real 
\Cst-algebra of an \'etale groupoid \( \mathcal{G} \) is the
completion of the real groupoid convolution algebra 
\( \Cc(\mathcal{G}) \) under
the regular representation
(cf.~\cite{Khoshkam-Skandalis:Regular_representation}), which we recall
here. The real groupoid convolution algebra \( \Cc(\mathcal{G}) \) consists
of real-valued, compactly supported continuous functions \(
f\colon\mathcal{G}\to\R \), equipped with the convolution product and
involution
\[ 
    f*g(\gamma)\defeq\sum_{\gamma_1\gamma_2=\gamma}f(\gamma_1)g(\gamma_2),\quad
    f^*(\gamma)\defeq f(\gamma^{-1}).
\]
The space \( \Cc(\mathcal{G}) \) is a pre-Hilbert module over \(
\Co(\mathcal{G}^0) \), with right \( \Co(\mathcal{G}^0) \)-module structure 
and inner product (cf.~\cite{Bourne-Mesland:Topological_phases}*{Section 1.3})
\begin{align*} 
(f\cdot \varphi)(\gamma)&\defeq f(\gamma)\varphi(s(\gamma)),\\
\braket*{f,g}(x)&\defeq (f^**g)\big|_{\mathcal{G}^0}(x)=\sum_{\gamma\in
r^{-1}(x)}f(\gamma)g(\gamma),
\end{align*}
for \( \gamma\in\mathcal{G} \), \( x\in\mathcal{G}^0 \), \(
\varphi\in\Co(\mathcal{G}^0) \) and \( f,g\in\Cc(\mathcal{G}) \). Write \(
L^2(\mathcal{G}) \) for the Hilbert \Cst-module completion of \(
\Cc(\mathcal{G}) \). Then the left multiplication
\[ 
    \pi(f)\colon \Cc(\mathcal{G})\to\Cc(\mathcal{G}),\quad g\mapsto f*g
\]
extends to a \st-representation
\[
    \pi\colon\Cc(\mathcal{G})\to\Bdd_{\Co(\mathcal{G}^0)}(L^2(\mathcal{G})),
    \quad f\mapsto\pi(f),
\]
representing \( \Cc(\mathcal{G}) \) by adjointable operators on Hilbert \(
\Co(\mathcal{G}^0) \)-module \( L^2(\mathcal{G}) \).

\begin{definition}
The reduced groupoid \Cst-algebra \( \Cst(\mathcal{G}) \) of an \'etale
groupoid \( \mathcal{G} \) is the completion of \( \Cc(\mathcal{G}) \)
in the norm \( f\mapsto\norm*{\pi(f)} \).
\end{definition}

Let \( x\in\mathcal{G}^0 \). Then the evaluation \st-homomorphism
\[ 
    \ev_x\colon\Co(\mathcal{G}^0)\to\R,\quad \ev_x(\varphi)\defeq\varphi(x)
\]
induces a \st-homomorphism
\[ 
    (\ev_x)_*\colon\Bdd_{\Co(\mathcal{G}^0)}(L^2(\mathcal{G}))\to
    \Bdd(L^2(\mathcal{G})\otimes_{\ev_x}\R),\quad T\mapsto T\otimes \id.
\]
Denote by \( \mathcal{G}_x \) the source fibre of \( \mathcal{G} \) at \( x
\). The Hilbert space
\[
    \mathcal{H}_x\defeq
    L^2(\mathcal{G})\otimes_{\ev_x}\R
\]
is isomorphic to \( \ell^2(\mathcal{G}_x) \), sending \( f\otimes t \)
to the restriction of \( t\cdot f \) to \( \mathcal{G}_x \).
Thus \( \pi_x\defeq(\ev_x)_*\circ\pi \) gives a representation of \(
\Cst(\mathcal{G}) \) on \( \mathcal{H}_x \). We call it the
\emph{localised} representation (of the regular representation \( \pi \))
at \( x\in\mathcal{G}^0 \); and call \( \mathcal{H}_x \) the
\emph{localised} Hilbert space at \( x\in\mathcal{G}^0 \). 

Now we let \( \mathcal{G} \) be 
the groupoid of Delone sets \( \mathcal{G}_\Lambda
\). By definition, its regular representation is defined on the Hilbert
\( \Cont(\Omega_0) \)-module 
\( L^2(\mathcal{G}_\Lambda)_{\Cont(\Omega_0)} \). 
This is a Hilbert module over a unital, commutative \Cst-algebra, and hence
equivalent to a continuous field of Hilbert spaces over \( \Omega_0 \). 
The source fibre of \( \mathcal{G}_\Lambda \) at \( \omega\in\Omega_0 \)   
is given by
\[ 
    \set*{(\omega-x,-x)\;\middle|\;0\in\omega-x}
    =\set*{(\omega-x,-x)\;\middle|\;x\in\omega},
\]
which is in bijection with the Delone set \( \omega \) via \(
(\omega-x,-x)\mapsto x\). Thus the localised Hilbert space \(
\mathcal{H}_\omega \) is unitarily isomorphic to \( \ell^2(\omega) \) via
(cf.~\cite{Bourne-Mesland:Topological_phases}*{Section 4.1}):
\begin{equation}\label{eq:rho_omega}
\begin{aligned}
\rho_\omega\colon\mathcal{H}_\omega\xrightarrow{\sim}\ell^2(s^{-1}(\omega)),&\qquad
\rho_\omega(f\otimes t)(x)\defeq tf(\omega-x,-x).\\
\rho_\omega^{-1}\colon\ell^2(s^{-1}(\omega))\xrightarrow{\sim}\mathcal{H}_\omega,&\qquad\rho_\omega^{-1}(\ket*{x})\defeq\ket*{\omega-x,-x},
\end{aligned}
\end{equation}
where \( \ket*{\omega-x,-x} \) is the equivalence class in \(
\mathcal{H}_\omega \) of any continuous function \( f \)  on \( \mathcal{G} \),
such that \( \supp f\cap s^{-1}(\omega)=\{(\omega-x,-x)\} \) and \(
f(\omega-x,-x)=1 \). 

\begin{lemma}[\cite{Bourne-Mesland:Topological_phases}*{Section 4.1}]
The localised representation \(
\pi_\omega\colon\Cst(\mathcal{G}_\Lambda)\to\Bdd(\ell^2(\omega)) \) is given by the formula
\begin{equation}\label{eq:pi_omega}
(\pi_\omega(f)\psi)(x)\defeq\sum_{y\in\omega}f(\omega-x,y-x)\psi(y)
\end{equation}
for \( f\in\Cst(\mathcal{G}_\Lambda) \), \( \psi\in\ell^2(\omega) \) and \(
x\in\omega \). Therefore,
\[ 
    \braketvert{x}{\pi_\omega(f)}{y}=f(\omega-x,y-x),\quad x,y\in\omega.
\]
\end{lemma}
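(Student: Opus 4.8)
The plan is to prove the formula first for \( f\in\Cc(\mathcal{G}_\Lambda) \) by a direct computation in the convolution algebra, and then extend it to all of \( \Cst(\mathcal{G}_\Lambda) \) by continuity. Recall that \( \pi_\omega=(\ev_\omega)_*\circ\pi \), and that under the unitary \( \rho_\omega \) of \eqref{eq:rho_omega} together with the bijection \( s^{-1}(\omega)\cong\omega \), the operator \( (\ev_\omega)_*\pi(f) \) is computed as follows: a basis vector \( \ket*{y} \), \( y\in\omega \), corresponds to the class \( \ket*{\omega-y,-y}\in\mathcal{H}_\omega \) of any \( g\in\Cc(\mathcal{G}_\Lambda) \) whose restriction to \( s^{-1}(\omega) \) is the point mass at \( (\omega-y,-y) \); one then applies the left convolution \( g\mapsto f*g \) and restricts to \( s^{-1}(\omega) \), that is, one reads off the values \( (f*g)(\omega-x,-x) \) for \( x\in\omega \). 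So everything reduces to evaluating this convolution.

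The combinatorial core is the description of the factorisations of \( (\omega-x,-x) \) inside the reduced action groupoid \( \mathcal{G}_\Lambda \). If \( (\omega-x,-x)=\gamma_1\gamma_2 \), then \( r(\gamma_1)=r(\omega-x,-x)=\omega-x \), so \( \gamma_1=(\omega-x,a) \) for some \( a\in\R^d \); membership \( \gamma_1\in\mathcal{G}_\Lambda \) forces \( s(\gamma_1)=\omega-x-a\in\Omega_0 \), i.e.\ \( x+a\in\omega \). Writing \( y\defeq x+a\in\omega \) and using the composition and inversion rules of \( \Omega_\Lambda\rtimes\R^d \), one finds \( \gamma_2=\gamma_1^{-1}(\omega-x,-x)=(\omega-y,-y) \). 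Hence the factorisations are exactly
\[
    (\omega-x,-x)=(\omega-x,\,y-x)\cdot(\omega-y,\,-y),\qquad y\in\omega,
\]
and --- the point that makes the whole construction well defined --- each second factor again lies in \( s^{-1}(\omega) \), so only the restriction \( g|_{s^{-1}(\omega)} \) enters. Since \( \supp f \) is compact and \( \mathcal{G}_\Lambda \) is \'etale (so \( s^{-1}(\omega) \) is discrete and closed, and \( \supp f\cap r^{-1}(\omega-x) \) is finite), the convolution is a finite sum
\[
    (f*g)(\omega-x,-x)=\sum_{y\in\omega}f(\omega-x,\,y-x)\,g(\omega-y,-y).
\]
Taking \( g \) to be the chosen representative of \( \ket*{y_0} \) collapses this to \( f(\omega-x,\,y_0-x) \), which is precisely \( \braketvert*{x}{\pi_\omega(f)}{y_0} \); expanding an arbitrary \( \psi\in\ell^2(\omega) \) over the orthonormal basis and using boundedness of \( \pi_\omega(f) \) yields \eqref{eq:pi_omega} for \( f\in\Cc(\mathcal{G}_\Lambda) \).

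To pass to \( f\in\Cst(\mathcal{G}_\Lambda) \), I would observe that both sides of \( \braketvert*{x}{\pi_\omega(f)}{y}=f(\omega-x,y-x) \) are norm-continuous in \( f \): the left side visibly, as a matrix coefficient of the \st-representation \( \pi_\omega \), and the right side because this very identity realises evaluation at \( (\omega-x,y-x) \) as such a matrix coefficient, hence it extends from the dense subalgebra \( \Cc(\mathcal{G}_\Lambda) \) to a bounded functional on \( \Cst(\mathcal{G}_\Lambda) \). A standard approximation argument then gives the matrix-element formula, and therefore \eqref{eq:pi_omega}, for all \( f\in\Cst(\mathcal{G}_\Lambda) \), the right-hand sum now being the \( \ell^2 \)-convergent series defining \( (\pi_\omega(f)\psi)(x) \).

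I expect the main obstacle to be purely organisational: carrying the structure maps of the reduction \( (\Omega_\Lambda\rtimes\R^d)^{\Omega_0}_{\Omega_0} \) through the factorisation step without sign or index errors, and in particular checking that \( \gamma_2=(\omega-y,-y) \) and that every factor remains in \( s^{-1}(\omega) \) --- this last point being exactly what guarantees that the class in \( \mathcal{H}_\omega=L^2(\mathcal{G}_\Lambda)\otimes_{\ev_\omega}\R \), and hence the resulting formula, does not depend on the choice of bump function \( g \).
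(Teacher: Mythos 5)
Your proof is correct: the factorisation $(\omega-x,-x)=(\omega-x,y-x)\cdot(\omega-y,-y)$ with $y$ ranging over $\omega$ is exactly right, the observation that every second factor lies in $s^{-1}(\omega)$ is indeed the point that makes the action on $\mathcal{H}_\omega=L^2(\mathcal{G}_\Lambda)\otimes_{\ev_\omega}\R$ independent of the chosen bump function, and the continuity argument for passing from $\Cc(\mathcal{G}_\Lambda)$ to the reduced completion is the standard one. The paper itself gives no proof, deferring entirely to the cited reference, and your computation is precisely the argument that reference (and any direct verification) would supply.
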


When we consider systems with a finite number of
internal degrees of freedom inside every
lattice site, e.g.~spins of electrons, 
then we must replace the observable \Cst-algebra
by its matrix algebra \(
\Mat_N\Cst(\mathcal{G}_{\Lambda})
=\Cst(\mathcal{G}_\Lambda)\otimes\Mat_N(\R) \).
In the same way as \cref{sec:position_spectral_triple},
we extend the regular 
representation of \( \Cst(\mathcal{G}_\Lambda) \) to a representation
\[ 
\pi_\omega^N\defeq\pi_\omega\otimes e_N,\quad
\Cst(\mathcal{G}_\Lambda)\otimes\Mat_N(\R)\to\Bdd(\ell^2(\omega)\otimes\mathcal{K}),
\]
where \( \mathcal{K} \) is a separable Hilbert space, and 
\(  e_N\colon\Mat_N(\R)\hookrightarrow\Cpt(\mathcal{K}) \) is any
rank-\( N \) corner embedding induced by an isometry \( \R^N\hookrightarrow
\mathcal{K}\). We may describe the representation 
\( \pi_\omega^N \)
by an infinite matrix indexed by \( x,y\in\omega \), with matrix elements
\begin{equation}\label{eq:pi_omega^N}
\pi^N_\omega(f\otimes S)_{x,y}\defeq\braketvert*{x}{\pi_\omega(f)}{y}\cdot
S\in\Mat_N(\R).
\end{equation}
for all \( f\in\Cst(\mathcal{G}_\Lambda) \) and \( S\in\Mat_N(\R) \). 

\subsubsection{Position spectral triple over the groupoid \Cst-algebra}
In order to construct a spectral triple over
\( \Cst(\mathcal{G}_\Lambda) \) or \(
\Mat_N\Cst(\mathcal{G}_\Lambda) \),
one way is to first construct an unbounded
Kasparov \( \Cst(\mathcal{G}_\Lambda) \)-\( \Cont(\Omega_0) \)--module, then
take its (unbounded) Kasparov product with a \st-homomorphism \(
\Cont(\Omega_0)\to\R \). Such an unbounded Kasparov module was given in
\cite{Bourne-Mesland:Topological_phases}*{Section 2.3.1} using the ``position''
operators on the Hilbert \( \Cont(\Omega_0) \)-module \(
L^2(\mathcal{G}_\Lambda) \):
\begin{equation}\label{eq:bulk_cycle}
\bulkcycle\defeq\left(\Cc(\mathcal{G}_\Lambda)\otimes\Cl_{0,d},\quad
L^2(\mathcal{G}_\Lambda)_{\Cont(\Omega_0)}\otimes\bigwedge\nolimits^*\R^{d},\quad\sum_{i=1}^dX_j\otimes\gamma^j\right),
\end{equation}
where
\[
c_j\colon\mathcal{G}_\Lambda\to\R,\quad 
c_j(\omega,x)\defeq x_j; \qquad
X_jf(\omega,x)\defeq c_j(\omega,x)f(\omega,x).
\]
The map \( c_j\colon\mathcal{G}_\Lambda\to\R \) is 
an \emph{exact} groupoid cocycle in the
sense of \cite{Mesland:Groupoid_cocycles}*{Definition 4.1.2}. It
follows from the general construction in
\cite{Mesland:Groupoid_cocycles}*{Theorem 3.2.2} that \( \bulkcycle \) is
an unbounded Kasparov module. Following \cite{Bourne-Mesland:Topological_phases}, we
call \( \bulkcycle \) the \emph{bulk cycle}.  

Now we construct the position spectral triple by localising \( \bulkcycle
\) at any \( \omega\in\Omega_0 \). As explained in the paragraph after
\cref{def:Delone_set}, the physically relevant Hilbert space is \(
\ell^2(\omega)\otimes\mathcal{K} \) instead of \( \ell^2(\omega) \) or \(
\ell^2(\omega)\otimes\R^N \).  
This is fixed by taking any isometry \( \R^N\hookrightarrow \mathcal{K} \)
and its induced corner embedding \( e_N\colon \Mat_N(\R)\hookrightarrow
\Cpt(\mathcal{K}) \). 

\begin{theorem}\label{thm:position_spectral_triple_groupoid}
Let \( \omega\in\Omega_0 \) and \( \mathcal{K} \) be a Hilbert space. 
The following data
\begin{equation}\label{eq:groupoid_position_spectral_triple}
\xi_{\omega,N}^\Gpd\defeq\left(\Mat_N(\Cc(\mathcal{G}_\Lambda))\otimes\Cl_{0,d},\quad\ell^2(\omega)\otimes\mathcal{K}\otimes\bigwedge\nolimits^*\R^{d},\quad\sum_{j=1}^d\pos_j\otimes
\id_\mathcal{K}\otimes\gamma^j\right)
\end{equation}
is a position spectral triple. It represents a class in
\[
    \KK(\Mat_N\Cst(\mathcal{G}_\Lambda)\otimes\Cl_{0,d},\R)
    \simeq\KK(\Cst(\mathcal{G}_\Lambda)\otimes\Cl_{0,d},\R),
\]
which is the
Kasparov product of the following (unbounded) Kasparov modules and
\st-homomorphisms:
\begin{enumerate}
\item the bulk cycle \( \bulkcycle \), which gives a class in \(
\KK(\Cst(\mathcal{G}_\Lambda)\otimes\Cl_{0,d},\Cont(\Omega_0)) \)\textup{;}
\item the evaluation \st-homomorphism \(
\ev_\omega\colon\Cont(\Omega_0)\to\R \), which gives a class in \(
\KK(\Cont(\Omega_0),\R) \)\textup{;}
\item the corner embedding \( e_N\colon \Mat_N(\R)\hookrightarrow
\Cpt(\mathcal{K}) \), which gives an invertible element in \(
\KK(\Mat_N(\R),\R) \).  
\end{enumerate}
\end{theorem}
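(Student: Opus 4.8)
The plan is to establish two things: that $\xi^\Gpd_{\omega,N}$ satisfies the axioms of \cref{def:position_spectral_triple} and is locally compact, and that its $\KK$-class is the iterated Kasparov product in \textup{(1)--(3)}; the asserted group identification $\KK(\Mat_N\Cst(\mathcal{G}_\Lambda)\otimes\Cl_{0,d},\R)\simeq\KK(\Cst(\mathcal{G}_\Lambda)\otimes\Cl_{0,d},\R)$ is then Morita invariance of $\KK$. For the axioms, note first that $\omega\in\Omega_0\subseteq\Del_{(r,R)}(\R^d)$ is a Delone set, so $\ell^2(\omega,\mathcal{K})$ is precisely the Hilbert space attached to $\omega$ in \cref{def:position_spectral_triple}, with $\Mat_N\Cst(\mathcal{G}_\Lambda)$ acting via $\pi^N_\omega$, the factor $\Cl_{0,d}$ via its standard representation on $\WR d$, and $\Mat_N(\Cc(\mathcal{G}_\Lambda))\otimes\Cl_{0,d}$ dense. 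Since the $\pos_j$ commute while the $\gamma^j$ anticommute and square to $1$, one has $D^2=\sum_j\pos_j^2\otimes 1$, which is diagonal with $\ket*{x}\mapsto\abs*{x}^2\ket*{x}$, so $D$ is self-adjoint with the finitely supported functions as a core. Local compactness is immediate from \eqref{eq:pi_omega^N}, since each matrix element $\pi^N_\omega(f\otimes S)_{x,y}=f(\omega-x,y-x)\,S$ has finite rank. From \eqref{eq:pi_omega}, $[\pos_j,\pi_\omega(f)]$ has matrix element $(x_j-y_j)f(\omega-x,y-x)=-(c_jf)(\omega-x,y-x)$ with $c_j$ the cocycle of \eqref{eq:bulk_cycle}, so $[D,\pi^N_\omega(f\otimes S)]=-\sum_j\bigl(\pi_\omega(c_jf)\otimes e_N(S)\bigr)\otimes\gamma^j$ is bounded, while the graded commutators of $D$ with the $\Cl_{0,d}$-generators vanish by the Clifford relations. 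Finally $\pi^N_\omega(a)(1+D^2)^{-1}$ is compact: compact support of $f$ forces $\pi^N_\omega(a)_{x,y}=0$ for $\abs*{x-y}>C$, the Delone property bounds $\#\{y\in\omega:\abs*{x-y}\le C\}$ uniformly, and $(1+\abs*{x}^2)^{-1}\to 0$ while $\#\{x\in\omega:\abs*{x}\le R\}<\infty$, so truncating in $\abs*{x}$ writes the product as a norm limit of finite-rank operators. (Equivalently, it is the image under the localisation $(\,\cdot\,)\otimes_{\ev_\omega}\R$ of the operator $\pi(f)(1+X^2)^{-1}$ on $L^2(\mathcal{G}_\Lambda)_{\Cont(\Omega_0)}$, which is compact because $\bulkcycle$ is a Kasparov module.)

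To identify the class I would argue in two steps. By \cite{Mesland:Groupoid_cocycles}*{Theorem 3.2.2}, $\bulkcycle$ of \eqref{eq:bulk_cycle} is an unbounded Kasparov $\bigl(\Cst(\mathcal{G}_\Lambda)\otimes\Cl_{0,d}\bigr)$-$\Cont(\Omega_0)$--module. Since $\ev_\omega$ is represented by the Kasparov module $(\R,\ev_\omega,0)$ with zero operator, the internal product $\bulkcycle\otimes_{\Cont(\Omega_0)}[\ev_\omega]$ is represented by the localisation $\bulkcycle\otimes_{\ev_\omega}\R$: the operators $X_j$, being multiplication by continuous functions on $\mathcal{G}_\Lambda$, commute with the right $\Cont(\Omega_0)$-action, so $\sum_jX_j\otimes\gamma^j$ descends, and the descended pair is an unbounded Kasparov module by the verification above. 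Under $\rho_\omega$ of \eqref{eq:rho_omega} the localised Hilbert space becomes $\ell^2(\omega)$, the image of $\Cc(\mathcal{G}_\Lambda)$ becomes the finitely supported functions (so the core descends), and $X_j\otimes 1$ becomes $\pm\pos_j$; the sign comes from the parametrisation of the source fibre and does not change the class, since it may be absorbed in the grading automorphism $\gamma^j\mapsto-\gamma^j$ of $\Cl_{d,0}$. Hence $\bulkcycle\otimes_{\ev_\omega}\R$ is the position spectral triple $\bigl(\Cc(\mathcal{G}_\Lambda)\otimes\Cl_{0,d},\ \ell^2(\omega)\otimes\WR d,\ \sum_j\pos_j\otimes\gamma^j\bigr)$ over $\Cst(\mathcal{G}_\Lambda)\otimes\Cl_{0,d}$.

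Next, $(\Mat_N(\R),\mathcal{K},0)$ is a Kasparov $\Mat_N(\R)$-$\R$--module, the corner embedding $e_N$ landing in $\Cpt(\mathcal{K})$; it decomposes as the defining rank-$N$ Kasparov module of $\Mat_N(\R)$ together with a degenerate summand, hence corresponds to $1\in\KK(\R,\R)$ under the Morita equivalence $\Mat_N(\R)\sim\R$. Its operator being zero, the exterior product of $\bulkcycle\otimes_{\ev_\omega}\R$ with it is represented by
\[
\Bigl(\Mat_N(\Cc(\mathcal{G}_\Lambda))\otimes\Cl_{0,d},\quad\ell^2(\omega)\otimes\mathcal{K}\otimes\WR d,\quad\textstyle\sum_j\pos_j\otimes\gamma^j\Bigr),
\]
and the identification $\ell^2(\omega)\otimes\mathcal{K}\cong\ell^2(\omega,\mathcal{K})$ turns the representation into $\pi^N_\omega$ and this triple into $\xi^\Gpd_{\omega,N}$ of \eqref{eq:groupoid_position_spectral_triple}. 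Since $(\Mat_N(\R),\mathcal{K},0)$ implements a Morita equivalence, exterior multiplication by it is exactly the isomorphism of the statement, so composing the two steps yields the asserted description of the class of $\xi^\Gpd_{\omega,N}$.

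I expect the main obstacle to be the first step of the identification: justifying rigorously that localisation along $\ev_\omega$ computes the internal Kasparov product. The point is that, with the $*$-homomorphism module $(\R,\ev_\omega,0)$ carrying zero operator, Kucerovsky's criteria for the product reduce to verifying that $\sum_jX_j\otimes 1$ is self-adjoint and regular on the localised module, the connection and curvature conditions being automatic. Over $\ell^2(\omega)$ this operator is $\pm\sum_j\pos_j\otimes\gamma^j$, a multiplication operator, so self-adjointness and regularity are clear once one confirms that the localisation of each $X_j$ is indeed the position operator; the care lies in this identification and in the axiom verification above, both of which fit the unbounded groupoid-cocycle formalism of \cite{Mesland:Groupoid_cocycles}. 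The exterior product with the zero-operator module and the Morita bookkeeping are then routine.
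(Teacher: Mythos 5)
Your proposal is correct and follows essentially the same route as the paper: localise the bulk cycle along \( \ev_\omega \), identify the localised module with \( \ell^2(\omega,\mathcal{K})\otimes\WR{d} \) via \( \rho_\omega \), check that \( X_j \) becomes the position operator, and observe that the products with the zero-operator modules in (2) and (3) are immediate. The only differences are that you verify the spectral-triple axioms directly where the paper simply cites \cite{Bourne-Mesland:Aperiodic}*{Proposition 4.1}, and that you flag the sign \( c_j(\omega-x,-x)=-x_j \) in the localisation of \( X_j \), a point the paper's proof passes over silently.
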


\begin{proof}
The composition of (1)--(3) is easy to compute because either (2) or (3)
can be described by a Hilbert module with zero operator. The tensor product
of these Hilbert modules
\[
    \left(L^2(\mathcal{G}_\Lambda)\otimes_{\ev_\omega}\R
    \otimes\mathcal{K}\right)\otimes\bigwedge\nolimits^*\R^{d}
\]
is isomorphic to \(
\ell^2(\omega)\otimes\mathcal{K}\otimes\bigwedge\nolimits^*\R^{d} \) via
the localised representation at \( \omega \).  Thus it suffices to check
that \( c_j \) is mapped to \( \pos_j \) under the composition and that
\eqref{eq:groupoid_position_spectral_triple} is a spectral triple. By
\eqref{eq:rho_omega}, the operator \( X_j\otimes \id_{\mathcal{K}}\otimes
\id_{\bigwedge\nolimits^*\R^{d}} \) acts on \(
\ell^2(\omega)\otimes\mathcal{K}\otimes\bigwedge\nolimits^*\R^{d} \) sends
\( \ket*{x}\otimes v\otimes w \) to \( x_j\ket*{x}\otimes v\otimes w \),
where \( x\in\omega \) is considered an element in \( \R^d \). It is a
spectral triple by \cite{Bourne-Mesland:Topological_phases}*{Proposition
4.1}. Thus \eqref{eq:groupoid_position_spectral_triple} is a position
spectral triple.
\end{proof}

\subsection{Roe \Cst-algebra of a Delone set}
\label{sec:Roe_Cst-algebra_model}
The coarse-geometric approach uses the Roe \Cst-algebra \( \CstRoe(\Lambda)
\) of the Delone set \( \Lambda \).  
It consists of operators on \( \ell^2(\Lambda,\mathcal{K}) \)
which are locally compact and
can be approximated by controlled operators. In this setting, we treat a
Delone set \( \Lambda\subseteq\R^d \) as discrete metric space, equipped with the
subspace topology from \( \R^d \).

\begin{definition}
An \emph{ample \( X \)-module} or \emph{ample module over \( X \)} is given
by a pair \( (X,\mathcal{H}_X) \), where \( X \) is a proper
metric space, and \( \mathcal{H}_X \) is a Hilbert space that carries a 
non-degenerate representation
\( \varrho\colon\Co(X)\to\Bdd(\mathcal{H}_X) \),
which is \emph{ample} in the sense that
\begin{center}
\( \varrho(f)\in\Cpt(\mathcal{H}_X) \)\quad if and only if\quad \( f=0 \).
\end{center}
\end{definition} 

\begin{example}[\cite{Willett-Yu:Higher_index_theory}*{Example
  4.1.2--4.1.4}]\label{ex:standard_ample_module}
Let \( X \) be a Riemannian manifold with volume form \( \mu \). Assume that
every connected component of \( X \) has dimension greater equal than 1, e.g.
\( X=\R^d \) for \( d\geq 1 \) with the Lebesgue measure. Then pointwise
multiplication gives an ample \( X \)-module \( \mathcal{H}_X\defeq L^2(X,\mu) \).

If \( X \) is a discrete metric space with the counting measure, then the
representation \( \Co(X)\to\Bdd(\ell^2(X)) \) by multiplication is no
longer ample. In such cases, we replace \( \ell^2(X) \) by 
by \( \mathcal{H}_X\defeq\ell^2(X,\mathcal{K})=\ell^2(X)\otimes\mathcal{K} \), 
where \( \mathcal{K} \) is any separable Hilbert space, and the
representation maps \( f\in\Co(X) \) to the tensor product of pointwise
multiplication with \( f \) with the identity operator on \( \mathcal{K}
\). Then \( \mathcal{H}_X \) is an ample \( X \)-module. 

We call \( \mathcal{H}_X \) in the above cases, the \emph{standard} ample
\( X \)-module.  
\end{example}

\begin{definition}[\cite{Willett-Yu:Higher_index_theory}*{Definition 4.1.7
and 5.1.1}]\label{def:support_controlled_locally_compact}
Let \( (X,\mathcal{H}_X) \)
and \( (Y,\mathcal{H}_Y) \) be ample modules, carrying 
ample representations \( \varrho_X\colon\Co(X)\to\Bdd(\mathcal{H}_X)
\) and \( \varrho_Y\colon\Co(Y)\to\Bdd(\mathcal{H}_Y) \). 
\begin{itemize}
\item The \emph{support} of an operator 
\( T\in\Bdd(\mathcal{H}_X,\mathcal{H}_Y) \), 
denoted by \( \supp(T) \), is the collection of all points \( (y,x)\in
Y\times X \), such that \( \varrho_Y(\chi_V)T\varrho_X(\chi_U)\neq 0 \)
holds for all open neighbourhood \( U \) of \( x \) and \( V \) of
\( y \).
\item A subset \( \mathcal{E}\subseteq X\times X \) is called
\emph{controlled}, if
\[ 
\sup\set*{\mathrm{d}_X(x_1,x_2)\;\middle|\;(x_1,x_2)\in\mathcal{E}}<+\infty.
\] 
\item An operator \( T\in\Bdd(\mathcal{H}_X) \) is called \emph{locally
compact}, if for any \( f\in\Cc(X) \),
\( T\varrho(f) \) and \( \varrho(f)T \) are compact operators. 
\item An operator \( T\in\Bdd(\mathcal{H}_X) \) is called
\emph{controlled}, if \( \supp(T) \) is controlled.  
\end{itemize}
\end{definition}

\begin{definition}[\cite{Willett-Yu:Higher_index_theory}*{Definition 5.1.4}]
Let \( (X,\mathcal{H}_X )\) be an ample module. 
The Roe \Cst-algebra \( \CstRoe(X,\mathcal{H}_X) \)
is the \Cst-algebra generated by all 
\emph{locally compact}, \emph{controlled} operators on \( \mathcal{H}_X \). 
\end{definition}

\begin{example}[\cite{Ewert-Meyer:Coarse_geometry}*{Example
1}]\label{ex:discrete_space_locally_compact_controlled}
Let \( \Lambda \) be a discrete metric space and 
\( \mathcal{H}_\Lambda=\ell^2(\Lambda,\mathcal{K}) \) be the
standard ample \( \Lambda \)-module. Describe an operator \(
T\in\Bdd(\mathcal{H}_\Lambda) \) 
by an infinite matrix \( (T_{x,y})_{x,y\in \Lambda} \)
where
\[ 
    T_{x,y}\defeq\braketvert*{x}{T}{y}\in\Bdd(\mathcal{K}).
\]
Then the support of \( T \) is given by
\[ 
    \supp(T)=\set*{(x,y)\in X\times X\;\middle|\;T_{x,y}\neq 0}.
\]
The operator \( T \) is locally compact, if and only if \( T_{x,y}\in\Cpt(\mathcal{K}) \) 
for all \( x,y\in \Lambda \); controlled, if and only if there exists \( R>0 \), such that \(
T_{x,y}=0 \) whenever \( \mathrm{d}(x,y)>R \).
\end{example}

A \st-homomorphism between 
Roe \Cst-algebras can be induced by a certain (not necessarily continuous)
map between their underlying spaces. Such maps are called \emph{coarse}. 
Their induced \st-homomorphism are constructed using \emph{covering isometries}.

\begin{definition}[\cite{Willett-Yu:Higher_index_theory}*{Definition
5.1.10}]\label{def:coarse_maps_close}
Let \( (X,\mathrm{d}_X) \) and \( (Y,\mathrm{d}_Y) \) 
be proper metric spaces.
A map \( f\colon X\to Y \)
is called \emph{coarse}, if the following holds:
\begin{enumerate}
\item The \emph{expansion function} \( \omega_f\colon
[0,\infty)\to[0,\infty) \), given by the formula
\[ 
   \omega_f(r)\defeq\sup\set*{\mathrm{d}_Y(f(x_1),f(x_2))
   \;\middle|\;\mathrm{d}_X(x_1,x_2)\leq r}
\]
satisfies \( \omega_f(r)<+\infty \) for all \( r\geq 0 \). 
\item The map \( f \) is proper, i.e.~\( f^{-1}(K)\subseteq X \) is pre-compact for
any compact set \( K\subseteq Y \).  
\end{enumerate}

Two coarse maps \( f,g\colon X\rightrightarrows Y \) are \emph{close}, if
there exists \( c\geq 0 \) such that for all \( x\in X \), \(
d_Y(f(x),g(x))\leq c \).
\end{definition}

\begin{deflem}[\cite{Willett-Yu:Higher_index_theory}*{Definition
5.1.11, Lemma 5.1.12}]\label{deflem:covering_isometry}
Let \( (X,\mathcal{H}_X) \) and \( (Y,\mathcal{H}_Y) \) be ample modules
and let \( f\colon X\to Y \) be a coarse map. A covering isometry for \( f
\) is an isometry \( V\colon\mathcal{H}_X\to\mathcal{H}_Y \) such that
there exists \( t\geq 0 \) such that \( \mathrm{d}(y,f(x))<t \) whenever \(
(y,x)\in\supp(V) \).

The \st-homomorphism
\[
    \Ad_V\colon\Bdd(\mathcal{H}_X)\to\Bdd(\mathcal{H}_Y),\qquad T\mapsto
    VTV^*
\]
restricts to a \st-homomorphism \(
\CstRoe(X,\mathcal{H}_X)\to\CstRoe(Y,\mathcal{H}_Y) \). Its induced maps
\[ 
    f_*\colon\KK(\Cl_{n,0},\CstRoe(X,\mathcal{H}_X))\to\KK(\Cl_{n,0},\CstRoe(Y,\mathcal{H}_Y))
\]
depends only on \( f \) and not on the choice of \( V \).  
\end{deflem}

The \emph{coarse category} consists of proper metric spaces as objects, and
equivalence classes of coarse maps as arrows, where two coarse maps are
equivalent if and only if they are close. 
A \emph{coarse equivalence} is an isomorphism in the
coarse category. If \( X\subseteq Y \) is a metric subspace that is
coarsely equivalent to \( Y \), then we also say that \( X \) is
\emph{coarsely dense} in \( Y \).
In particular, any Delone set \( \Lambda \) of \( \R^d \)
is coarsely dense in \( \R^d \). 

\begin{proposition}[\cite{Willett-Yu:Higher_index_theory}*{Proposition
4.3.5}]\label{prop:Roe_Cst-algebra_independent_module}
Let \( (X,\mathcal{H}_X) \) and \( (Y,\mathcal{H}_Y) \) be ample modules
and \( f\colon X\to Y \) be a coarse equivalence, then there exists a
covering isometry \( V\colon \mathcal{H}_X\to\mathcal{H}_Y\) that is
a unitary equivalence.
\end{proposition}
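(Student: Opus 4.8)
The plan is to build the desired covering unitary out of two covering isometries --- one for $f$ and one for a coarse inverse of $f$ --- and then to upgrade the resulting partial isometry to a unitary by an absorption (Eilenberg--swindle) argument in which ampleness plays the decisive role of keeping everything controlled.

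Since $f$ is a coarse equivalence, it admits a coarse inverse $g\colon Y\to X$, so that $g\circ f$ is close to $\id_X$ and $f\circ g$ is close to $\id_Y$ in the sense of \cref{def:coarse_maps_close}. By \cref{deflem:covering_isometry} there are covering isometries $V\colon\mathcal{H}_X\to\mathcal{H}_Y$ for $f$ and $W\colon\mathcal{H}_Y\to\mathcal{H}_X$ for $g$; since a closeness estimate only enlarges the control parameter $t$, the isometry $WV$ covers $\id_X$ and $VW$ covers $\id_Y$. Moreover the support of $VV^{*}$ lies in a $2t$-neighbourhood of the diagonal of $Y\times Y$, so $VV^{*}$ and $1-VV^{*}$ are \emph{controlled} operators on $\mathcal{H}_Y$ in the sense of \cref{def:support_controlled_locally_compact}, and symmetrically $1-WW^{*}$ is controlled on $\mathcal{H}_X$. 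The task is therefore to modify $V$, without moving it outside a controlled neighbourhood of the graph of $f$, so that it becomes surjective.

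The absorption input, valid for any ample module $(X,\mathcal{H}_X)$, is that the amplification $\mathcal{H}_X\otimes\ell^{2}(\N)$ with representation $\varrho_X\otimes 1$ is again an ample $X$-module, and that there is a covering \emph{unitary} $u_X\colon\mathcal{H}_X\xrightarrow{\sim}\mathcal{H}_X\otimes\ell^{2}(\N)$ for $\id_X$: the inclusion of $\mathcal{H}_X$ as the first summand is a covering isometry supported on the diagonal, a covering isometry in the reverse direction exists by \cref{deflem:covering_isometry}, reindexing $\ell^{2}(\N)$ gives a diagonally supported unitary $\mathcal{H}_X\otimes\ell^{2}(\N)\otimes\ell^{2}(\N)\cong\mathcal{H}_X\otimes\ell^{2}(\N)$, and a swindle assembles these into $u_X$ --- which is controlled because every constituent move is supported near a diagonal. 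The same yields $u_Y$. Conjugating $V$ and $W$ by the $u$'s, we may assume $\mathcal{H}_X$ and $\mathcal{H}_Y$ each come equipped with a diagonally supported unitary identifying them with their own countable amplifications. Then the Hilbert-space Cantor--Schr\"oder--Bernstein construction, applied to $V$, $W$ and the controlled complementary projections $1-VV^{*}$, $1-WW^{*}$, produces a unitary $U\colon\mathcal{H}_X\to\mathcal{H}_Y$; here the amplification is used to perform each step of the construction by moving into a fresh $\ell^{2}(\N)$-coordinate rather than by iterating $WV$, so that the support of $U$ stays inside a single controlled neighbourhood of the graph of $f$ (together with those of $g$ and of diagonals). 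Thus $U$ is a covering isometry for $f$ that is a unitary, which is the assertion; independence of the induced K-theory map on this choice is the last clause of \cref{deflem:covering_isometry}.

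The main obstacle is precisely keeping the control \emph{uniform} through an infinite construction. A naive Cantor--Schr\"oder--Bernstein unitary is built along $WV$-orbits, and applying $(WV)^{n}$ displaces supports by order $n$, so it is not controlled; ampleness --- via the diagonally supported identifications $\mathcal{H}_X\cong\mathcal{H}_X\otimes\ell^{2}(\N)$ --- is exactly what lets one trade these unbounded iterations for bounded moves in the auxiliary $\ell^{2}(\N)$-directions. Verifying that the swindle can be arranged with one control parameter is the substantive point; the rest is bookkeeping with \cref{def:support_controlled_locally_compact} and the standard Schr\"oder--Bernstein combinatorics. (The tacit use of infinite-dimensionality of $\mathcal{H}_X,\mathcal{H}_Y$ is harmless: an ample module over a non-compact proper metric space is automatically infinite-dimensional.)
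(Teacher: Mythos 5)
This proposition is quoted from Willett--Yu (Proposition 4.3.5) and the paper does not reprove it, so there is no internal proof to compare against; but the textbook argument is quite different from yours and far more direct. Since \( f \) is a coarse equivalence, one chooses uniformly bounded Borel partitions \( \{E_i\} \) of \( X \) and \( \{F_i\} \) of \( Y \), each piece with non-empty interior, matched so that \( F_i \) lies within a fixed distance of \( f(E_i) \). Ampleness forces each \( \chi_{E_i}\mathcal{H}_X \) and \( \chi_{F_i}\mathcal{H}_Y \) to be an infinite-dimensional separable Hilbert space, so there are unitaries \( V_i\colon\chi_{E_i}\mathcal{H}_X\to\chi_{F_i}\mathcal{H}_Y \), and \( V=\bigoplus_i V_i \) is a covering unitary because every block is supported within a fixed distance of the graph of \( f \). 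No Schr\"oder--Bernstein argument and no swindle are needed; the single control parameter comes for free from the uniform boundedness of the partitions.

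Your route --- two covering isometries plus a controlled absorption --- can be pushed through, but as written it has a genuine gap precisely at the step you yourself flag as ``the substantive point''. Two concrete issues. First, your absorption input, a covering \emph{unitary} \( u_X\colon\mathcal{H}_X\to\mathcal{H}_X\otimes\ell^2(\N) \) for \( \id_X \), is itself an instance of the proposition being proved (take \( f=\id_X \) and note both modules are ample), and the justification offered for it --- ``a swindle assembles these into \( u_X \)'' --- is exactly the unproved step. It can be done: writing \( \mathcal{H}_X=(1-TT^*)\mathcal{H}_X\oplus T(\mathcal{H}_X\otimes\ell^2(\N)) \) for a covering isometry \( T\colon\mathcal{H}_X\otimes\ell^2(\N)\to\mathcal{H}_X \) of control \( t \), the shift among the countably many copies of \( \mathcal{H}_X \) inside the range of \( T \) is implemented by \( T(\id\otimes s)T^* \), which has control \( 2t \) uniformly in the index; but this is the whole content of the lemma and must be written out. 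Second, the phrase ``the Cantor--Schr\"oder--Bernstein construction applied to \( V \), \( W \) and the controlled complementary projections \( 1-VV^* \), \( 1-WW^* \)'' conceals the fact that \( (1-VV^*)\mathcal{H}_Y \) is \emph{not} a \( \Co(Y) \)-submodule: \( VV^* \) is controlled but does not commute with the multiplication representation, so the complement cannot be fed back into the covering-isometry machinery as a geometric module, and the supports of all operators built from it must be tracked by hand against \cref{def:support_controlled_locally_compact}. Until those two points are carried out, the argument is a plausible strategy rather than a proof; given that the direct partition argument of Willett--Yu avoids both difficulties entirely, I would recommend either citing the result (as the paper does) or switching to that argument.
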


As a special case, we have the following:
\begin{proposition}\label{prop:Roe_Cst-algebra_coarsely_invariant}
Let \( Y \) be a discrete metric space, and \( X\subseteq Y \) be a
coarsely dense subset. Let \( \mathcal{H}_X\defeq\ell^2(X,\mathcal{K}) \)
and \( \mathcal{H}_Y\defeq\ell^2(Y,\mathcal{K}) \) be their standard ample
modules. Then the operator \( V\colon \mathcal{H}_X\to\mathcal{H}_Y \),
\( V\ket*{x}\defeq\ket*{x} \) is a covering isometry for the isometric
embedding \( \iota\colon X\hookrightarrow Y \) and induce isomorphisms
\[
    \iota_*\colon\KK(\Cl_{n,0},\CstRoe(X,\mathcal{H}_X))
    \to\KK(\Cl_{n,0},\CstRoe(Y,\mathcal{H}_Y)).
\]
\end{proposition}

\begin{proof}
The support of \( V \) is
\[ 
    \supp(V)=\set*{(y,x)\in Y\times X\;\middle|\;\braketvert*{y}{V}{x}\neq
    0}=\set*{(\iota(x),x)\;\middle|\;x\in X}.
\]
So \( \mathrm{d}(y,\iota(x))=0 \) for all \( (y,x)\in\supp(V) \).
Therefore \( V \) is an covering isometry for \( \iota \), and \( \Ad_V \)
induces a \st-homomorphism \(
\iota_*\colon\CstRoe(X,\mathcal{H}_X)\to\CstRoe(Y,\mathcal{H}_Y) \). Since
\( \iota \) is a coarse equivalence, there exists a covering isometry \(
V'\colon\mathcal{H}_X\to\mathcal{H}_Y \) that is a unitary equivalence. 
Hence, \( \Ad_{V'} \) induces an isomorphism
\[
    \KK(\Cl_{n,0},\CstRoe(X,\mathcal{H}_X))\to\KK(\Cl_{n,0},\CstRoe(Y,\mathcal{H}_Y)).
\]
By \cref{deflem:covering_isometry}, the induced maps of \( \Ad_V \)
coincides with that of \( \Ad_{V'} \), hence an isomorphism as well. 
\end{proof}

\begin{remark}
By
\cref{prop:Roe_Cst-algebra_independent_module,prop:Roe_Cst-algebra_coarsely_invariant},
given two
Delone set \( \Lambda_1,\Lambda_2\subseteq \R^d \) and ample modules \(
\mathcal{H}_{\Lambda_1},\mathcal{H}_{\Lambda_2} \), then there exists an
isomorphism \(
\CstRoe(\Lambda_1,\mathcal{H}_{\Lambda_1})\simeq\CstRoe(\Lambda_2,\mathcal{H}_{\Lambda_2})
\). More generally, all of them are isomorphic to the Roe \Cst-algebra of
\( \R^d \) defined by its standard ample module,
cf.~\cite{Ewert-Meyer:Coarse_geometry}*{Theorem 2}. 
However, we note that the
inclusion \( \iota\colon\Lambda\hookrightarrow\R^d \)
does \emph{not} induce an isomorphism (via a covering
isometry) between the corresponding Roe
\Cst-algebras \( \CstRoe(\Lambda,\mathcal{H}_\Lambda) \) and \(
\CstRoe(\R^d,\mathcal{H}_{\R^d}) \) defined using their standard ample
modules. An isomorphism between the Roe \Cst-algebras of 
coarsely equivalent spaces \( X \) and \( Y \) was constructed in
\cite{Ewert-Meyer:Coarse_geometry}*{Theorem 3}. However, these spaces
carry different ample modules, involving an inverse of the coarsely dense
embedding \( \iota\colon X\to X\coprod Y \) in the coarse category.
\end{remark}

As opposed to the groupoid \Cst-algebra \( \Cst(\mathcal{G}_\Lambda) \),
the Roe \Cst-algebra \( \CstRoe(\Lambda) \) has simple
K-theory groups. It follows from coarse Mayer--Vietoris sequence
(cf.~\cites{Higson-Roe-Yu:Coarse_Mayer-Vietoris}) that
the K-theory of \( \CstRoe(\R^d) \) coincides with the K-theory of a point
by a degree shift of \( -d \). This holds for any Delone
set \( \Lambda\subseteq\R^d \) as well by
\cref{prop:Roe_Cst-algebra_coarsely_invariant}. Therefore,
\K-theory of the real or complex Roe \Cst-algebras of a Delone set \(
\Lambda\subseteq\R^d \) is given by
\begin{equation}\label{eq:K-theory_Roe_Cst-algebras}
\begin{aligned}
\K_i(\CstRoe(\Lambda)_\C)&\simeq \begin{cases}
\Z{\hphantom{/2}} & \text{if } i-d\equiv 0{\hphantom{,5,6,7}}\mkern-10mu\mod 2; \\
0 & \text{if } i-d\equiv 1{\hphantom{,5,6,7}}\mkern-10mu\mod 2. 
\end{cases} \\
\K_i(\CstRoe(\Lambda)_\R)&\simeq \begin{cases}
\Z & \text{if } i-d\equiv 0,4{\hphantom{,6,7}}\mkern-10mu\mod 8; \\
\Z/2 & \text{if } i-d\equiv 1,2{\hphantom{,6,7}}\mkern-10mu\mod 8; \\
0 & \text{if } i-d\equiv 3,5,6,7\mkern-10mu\mod 8.
\end{cases}
\end{aligned}
\end{equation}
We note that the isomorphisms of abelian groups in 
\eqref{eq:K-theory_Roe_Cst-algebras} are actually given by
\( \K_*(\C) \)-module or \( \KO_*(\R) \)-module isomorphisms
\[
  \K_*(\CstRoe(\Lambda)_\C)\simeq\K_{*-d}(\C),\quad
  \KO_*(\CstRoe(\Lambda)_\R)\simeq\KO_{*-d}(\R),
\]
cf.~\cite{Ewert-Meyer:Coarse_geometry}*{Section 3 and Corollary 5}.

While using the Roe \Cst-algebra as the observable \Cst-algebra, there is
no need to pass to its matrix algebras, as opposed to the case of 
groupoids.  It was proven in 
\cite{Ewert-Meyer:Coarse_geometry}*{Corollary 1} that an
operator \( T \) on \( \mathcal{H}_\Lambda^{\oplus N} \) is locally compact
and controlled if and only if its matrix elements in \( \Bdd(\mathcal{H}_\Lambda) \)
are locally compact and controlled, when \( T \) is viewed as an \( N\times
N \)-matrix with entries in \( \Bdd(\mathcal{H}_\Lambda) \). Therefore
\[
\Mat_N\CstRoe(\Lambda,\mathcal{H}_\Lambda)\simeq\CstRoe(\Lambda,\mathcal{H}_\Lambda^{\oplus
N}) 
\] 
for all \( N \), 
where \( \mathcal{H}_\Lambda^{\oplus N} \) carries the direct sum of the
standard ample representation \(
\Co(\Lambda)\to\Bdd(\mathcal{H}_\Lambda) \).
We may further identify \(
\mathcal{H}_\Lambda^{\oplus}\simeq\ell^2(\Lambda,\mathcal{K})\otimes\R^N\simeq\ell^2(\Lambda,\mathcal{K}\otimes\R^N)\simeq\ell^2(\Lambda,\mathcal{K})
\) using any unitary isomorphism \( \mathcal{K}\simeq\mathcal{K}\otimes\R^N
\). Such an isomorphism preserves controlled or locally compact operators
using the characterisation in
\cref{ex:discrete_space_locally_compact_controlled}.   

\subsubsection{Position spectral triple over the Roe \Cst-algebra}
Fix a closed subset \( X\subseteq\R^d \) equipped with the subspace metric,
e.g.~a Delone set in \( \R^d \). A relation
between the Roe \Cst-algebra on \( X \) and the ``position'' operators has
been considered in \cite{Ewert-Meyer:Coarse_geometry}*{Section 2.2} as
follows. For \( t=(t_1,\dots,t_d)\in\R^d \), 
let \( \mathrm{e}_t \) be the bounded continuous function on \(
X\subseteq\R^d \) given by
\[ 
    \mathrm{e}_t(x)\defeq\mathrm{e}^{\mathrm{i}t\cdot
            x}=\mathrm{e}^{\mathrm{i}\sum_{j=1}^dt_jx_j},\qquad
            x=(x_1,\dots,x_d)\in X\subseteq\R^d.
\]
Then the map \( \R^d\to\Cb(X) \) given by \( t\mapsto \mathrm{e}_t \) is
continuous in the strict topology of \( \Cb(X) \).

Let \( \mathcal{H}_X \) be an ample \( X \)-module.
The representation \( \varrho\colon\Co(X)\to\Bdd(\mathcal{H}_X) \)
extends to a strictly continuous, \st-representation
\( \overline{\varrho}\colon\Cb(X)\to\Bdd(\mathcal{H}_X) \). Thus
the map
\[ 
    \sigma\colon\R^d\to\Cb(X)\to\Bdd(\mathcal{H}_X),\qquad t\mapsto
    \mathrm{e}_t\mapsto\varrho(\mathrm{e}_t)
\]
is continuous for the norm topology on \( \Bdd(\mathcal{H}_X) \). If \( X
\) does not contain discrete components, 
and \( \mathcal{H}_X \) is the standard \( X \)-module 
as in \cref{ex:standard_ample_module}, then the restriction of \( \sigma \) to
the \( j \)-th coordinate component of \( \R^d \) gives the flow 
\( \mathrm{e}^{\mathrm{i}t\pos_j} \) generated by \( \pos_j \).

By conjugation, \( \sigma\colon\R^d\to\Bdd(\mathcal{H}_X) \) induces a
group homomorphism \( \Ad\sigma\colon\R^d\to\Aut(\Bdd(\mathcal{H}_X)) \).
We say that an operator \( T\in\Bdd(\mathcal{H}_X) \) is continuous with
respect to \( \Ad\sigma \) if and only if the map \( t\mapsto\Ad\sigma_t(T) \) is
continuous for the norm topology on \( \Bdd(\mathcal{H}) \). The property
of being a norm limit of controlled operators can be described as a
continuity property for the action \( \Ad\sigma \): 

\begin{lemma}[\cite{Ewert-Meyer:Coarse_geometry}*{Theorem
4}]\label{lem:controlled_as_continuity}
An operator \( T\in\Bdd(\mathcal{H}_X) \) is a norm limit of controlled
operators if and only if it is continuous with respect to \( \Ad\sigma \). Thus \(
T\in\CstRoe(X,\mathcal{H}_X) \) if and only if it is locally compact and continuous
with respect to \( \Ad\sigma \). 
\end{lemma}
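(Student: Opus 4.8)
The plan is to prove the first equivalence — that $T\in\Bdd(\mathcal{H}_X)$ is a norm limit of controlled operators precisely when $t\mapsto\Ad\sigma_t(T)$ is norm continuous — and then to read off the description of $\CstRoe(X,\mathcal{H}_X)$ by intersecting with the (automatically norm-closed) class of locally compact operators. The engine is mollification against the $\R^d$-action $\Ad\sigma$, combined with a geometric bookkeeping argument whose scale is dictated by the propagation of the operator.

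First I would treat \emph{controlled $\Rightarrow$ continuous}. Suppose $\supp(T)$ lies in the $R$-neighbourhood of the diagonal (we may take $R\geq1$). Partition $X$ into Borel pieces $V_i$ of diameter $\leq1$ by intersecting $X$ with a unit grid, so that the projections $\varrho(\chi_{V_i})$ (using the extension of $\varrho$ to bounded Borel functions, as in the definition of support) are pairwise orthogonal and sum to $1$; colour the grid cells with finitely many colours $1,\dots,M=M(R,d)$ so that cells of equal colour are more than $4R$ apart, and set $P_c=\sum_{i\text{ of colour }c}\varrho(\chi_{V_i})$. The propagation bound forces $\varrho(\chi_{V_i})T\varrho(\chi_{V_j})=0$ unless $\mathrm{d}(V_i,V_j)\leq R$, so for each $j$ at most one cell $i$ of a given colour contributes and $j\mapsto i$ is injective; hence every compression $P_cTP_{c'}$ is a genuine block operator with pairwise orthogonal domains and ranges, of norm $\leq\lVert T\rVert$. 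Now $\sigma_t\varrho(\chi_{V_i})=\varrho(\mathrm{e}_t\chi_{V_i})$ and $\mathrm{e}_t$ oscillates by at most $\lvert t\rvert$ over each $V_i$, so picking $x_i\in V_i$ one gets
\[
\sigma_t\varrho(\chi_{V_i})T\varrho(\chi_{V_j})\sigma_t^*
=\mathrm{e}^{\mathrm{i}t\cdot(x_i-x_j)}\varrho(\chi_{V_i})T\varrho(\chi_{V_j})+E_{ij}(t),
\qquad\lVert E_{ij}(t)\rVert=O(\lvert t\rvert)\lVert T\rVert,
\]
uniformly in $i,j$, while $\lvert\mathrm{e}^{\mathrm{i}t\cdot(x_i-x_j)}-1\rvert\leq\lvert t\rvert(R+2)$ on the surviving pairs. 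Regrouping by the finitely many colour pairs and taking suprema over the orthogonal blocks then yields $\lVert\Ad\sigma_t(T)-T\rVert=O_{R,d}(\lvert t\rvert)\lVert T\rVert\to0$. Since each $\Ad\sigma_t$ is isometric, the $\Ad\sigma$-continuous operators form a norm-closed subspace, so the whole norm closure of the controlled operators is $\Ad\sigma$-continuous.

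For the converse I would mollify: for $\phi\in L^1(\R^d)$ put $\alpha_\phi(T)\defeq\int_{\R^d}\phi(t)\,\Ad\sigma_t(T)\,\mathrm{d}t$, a Bochner integral that exists because $t\mapsto\Ad\sigma_t(T)$ is norm continuous and bounded. Two facts are needed. (i) If $\hat\phi$ is supported in $\overline{\Ball(0,R)}$ and $\phi$ has finite first moment, then $\alpha_\phi(T)$ has propagation $\leq R$: for open $U,V$ shrinking to points $x_0,y_0$, one localises the integrand by $\varrho(\chi_U),\varrho(\chi_V)$ and uses that $\mathrm{e}_{-t}$ is nearly constant on small sets to see that $\varrho(\chi_V)\alpha_\phi(T)\varrho(\chi_U)$ tends to $\hat\phi(y_0-x_0)\,\varrho(\chi_V)T\varrho(\chi_U)$, which vanishes once $\lvert y_0-x_0\rvert>R$, the first-moment hypothesis bounding the error. (ii) Taking $\phi_\lambda$ a $d$-fold product of one-dimensional Fej\'er kernels gives $\hat\phi_\lambda$ supported in $[-\lambda,\lambda]^d$, $\phi_\lambda\geq0$, $\int\phi_\lambda=1$, finite first moment, and $\phi_\lambda\to\delta_0$; then the approximate-identity estimate $\lVert\alpha_{\phi_\lambda}(T)-T\rVert\leq\int\phi_\lambda(t)\lVert\Ad\sigma_t(T)-T\rVert\,\mathrm{d}t\to0$ uses precisely the norm continuity. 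So each $\alpha_{\phi_\lambda}(T)$ is controlled and $\alpha_{\phi_\lambda}(T)\to T$. Moreover, if $T$ is also locally compact, then $\alpha_{\phi_\lambda}(T)\varrho(f)=\int\phi_\lambda(t)\,\sigma_t\bigl(T\varrho(\mathrm{e}_{-t}f)\bigr)\,\mathrm{d}t$ is a norm limit of compact operators for every $f\in\Cc(X)$ (and symmetrically on the other side), so $\alpha_{\phi_\lambda}(T)$ is locally compact and controlled. Since the locally compact operators and the $\Ad\sigma$-continuous operators each form norm-closed classes, and $\CstRoe(X,\mathcal{H}_X)$ is by definition the norm closure of the \st-algebra of locally compact controlled operators, this identifies $\CstRoe(X,\mathcal{H}_X)$ with the locally compact, $\Ad\sigma$-continuous operators.

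The main obstacle is the uniform estimate in the second step: $\Ad\sigma_t(T)-T$ is a priori an infinite sum of small corrections, and the real content is to arrange the geometry — a finite colouring whose scale is governed by the propagation of $T$ — so that these corrections reassemble into boundedly many honestly block-diagonal operators, each of norm $O(\lvert t\rvert)$. A secondary point needing care is the bookkeeping in the converse direction that makes the mollified operators \emph{exactly} controlled rather than merely close to controlled; this is why one wants a mollifier whose Fourier transform has genuinely compact support (hence the Fej\'er kernel) while still forming an approximate identity.
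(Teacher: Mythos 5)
This lemma is quoted from Ewert--Meyer (their Theorem 4) and the paper gives no proof of it, so there is nothing internal to compare against; your reconstruction follows what is essentially the standard (and, as far as I can tell, Ewert--Meyer's own) strategy: a colouring/block-decomposition argument with scale set by the propagation for the direction ``controlled \(\Rightarrow\) \(\Ad\sigma\)-continuous'', and Fourier mollification against the \(\R^d\)-action for the converse. The first direction is sound as written: the compressions \(P_cTP_{c'}\) really are block operators with orthogonal domains and ranges, the error terms \(E_{ij}(t)\) reassemble into finitely many such block operators, and norm-closedness of the \(\Ad\sigma\)-continuous operators finishes it.

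Two points in the converse need repair. First, a concrete inconsistency: the Fej\'er kernel does \emph{not} have a finite first moment (it decays only like \(\lvert t\rvert^{-2}\), so \(\int\lvert t\rvert\phi_\lambda(t)\,\mathrm{d}t\) diverges logarithmically), so you cannot simultaneously demand \(\hat\phi\) compactly supported, approximate identity, \emph{and} finite first moment from that choice; replace it by \(\phi=\lvert\check\psi\rvert^2\) for \(\psi\in\Cinfc(\R^d)\), which is Schwartz with \(\hat\phi=\psi*\tilde\psi\) compactly supported. Second, and more substantively, your argument for claim (i) only shows that \(\varrho(\chi_V)\alpha_\phi(T)\varrho(\chi_U)\) tends to \(0\) in norm as \(U,V\) shrink to \(x_0,y_0\) with \(\lvert y_0-x_0\rvert>R\); but the definition of \(\supp\) used here requires some pair of neighbourhoods on which the compression vanishes \emph{exactly}, and ``arbitrarily small'' does not give ``eventually zero''. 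The conclusion is nevertheless true: writing \(\braket{\eta,\alpha_\phi(T)\xi}=\int\hat\phi(y-x)\,\mathrm{d}\mu_{\eta,\xi}(y,x)\), where \(\mu_{\eta,\xi}\) is the complex measure on \(\overline{V}\times\overline{U}\) obtained from the spectral measure of the position operators, shows the compression vanishes identically once \(\hat\phi\) vanishes on \(\overline{V}-\overline{U}\), with no first-moment hypothesis. In the discrete setting actually used in this paper (\(X\) a Delone set, \(\mathcal{H}_X=\ell^2(X,\mathcal{K})\)) the issue evaporates, since one may take \(U=\{x_0\}\), \(V=\{y_0\}\) and the identity \(\alpha_\phi(T)_{y,x}=\hat\phi(y-x)T_{y,x}\) is exact. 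The rest of the argument --- the approximate-identity estimate using norm continuity, and the observation that local compactness survives the mollification --- is fine.
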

 
We take a slight different viewpoint, interpreting the previous lemma as
follows: the Roe \Cst-algebra is the largest \Cst-algebra, over which one
can have a position spectral triple:

\begin{theorem}\label{thm:largest_position_spectral_triple}
Let \( \Lambda\subseteq\R^d \) be a discrete subset. 
If 
\[  
    \xi\defeq \left(\mathcal{A}\otimes\Cl_{0,d},\quad
    \ell^2(\Lambda)\otimes\mathcal{K}\otimes\bigwedge\nolimits^*\R^{d},\quad
    \sum_{j=1}^d\pos_j\otimes\id_{\mathcal{K}}\otimes\gamma^j\right)
\]
is a position spectral triple
\textup{(}cf.~\cref{def:position_spectral_triple}\textup{)}, where \(
\mathcal{A} \) is a dense \st-subalgebra of a \Cst-algebra \( A \)  
represented on \( \ell^2(\Lambda)\otimes\mathcal{K} \) by
\( \varphi\colon A\to\Bdd(\ell^2(\Lambda)\otimes\mathcal{K}) \).
Then \( \varphi(\mathcal{A}) \) is contained in the
Roe \Cst-algebra \( \CstRoe(\Lambda,\mathcal{H}_\Lambda) \) 
defined by the standard \( \Lambda \)-module 
\( \mathcal{H}_\Lambda\defeq \ell^2(\Lambda)\otimes\mathcal{K} \)
as in \cref{ex:standard_ample_module}.
\end{theorem}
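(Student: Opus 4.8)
The plan is to verify directly that each $\varphi(a)$, $a\in\mathcal{A}$, belongs to $\CstRoe(\Lambda,\mathcal{H}_\Lambda)$ by checking the two properties singled out in the criterion recalled just above (\cite{Ewert-Meyer:Coarse_geometry}*{Theorem 4}): an operator on the standard module lies in the Roe \Cst-algebra iff it is locally compact and continuous for the conjugation action $\Ad\sigma$ of $\R^{d}$, where $\sigma_t=\varrho(\mathrm{e}_t)$. Local compactness needs no work: the position spectral triple is assumed \emph{locally compact}, so by definition the matrix elements $\varphi(a)_{x,y}\in\Bdd(\mathcal{K})$ are compact for all $x,y\in\Lambda$, which by \cref{ex:discrete_space_locally_compact_controlled} is exactly local compactness of $\varphi(a)$ on $\mathcal{H}_\Lambda=\ell^{2}(\Lambda,\mathcal{K})$. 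Note that the remaining spectral-triple axiom — compactness of $\varphi(a)(1+D^{2})^{-1}$ — is not needed; only the boundedness of the commutators will be used.

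The first substantive step is to extract, from the commutator axiom for $D$, the boundedness of each individual commutator $[\pos_j,\varphi(a)]$. Since $A$ is trivially graded, the graded commutator is the ordinary one, so the axiom reads: $[D,\varphi(a)\otimes 1]=\sum_{j=1}^{d}[\pos_j,\varphi(a)]\otimes\gamma^{j}$ extends to a bounded operator on $\ell^{2}(\Lambda,\mathcal{K})\otimes\WR{d}$. Let $1\in\WR{d}$ denote the generator of $\bigwedge\nolimits^{0}\R^{d}$, so that $\gamma^{k}(1)=e_k$. Compressing $[D,\varphi(a)\otimes 1]$ by the isometry $\xi\mapsto\xi\otimes 1$ on the right and the contraction $\xi\otimes v\mapsto\langle e_j,v\rangle\,\xi$ on the left then picks out precisely $[\pos_j,\varphi(a)]$; hence $[\pos_j,\varphi(a)]$ extends to a bounded operator with $\norm{[\pos_j,\varphi(a)]}\leq\norm{[D,\varphi(a)\otimes 1]}$ for every $j$.

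The second step is $\Ad\sigma$-continuity of $\varphi(a)$. On the standard module the representation $\varrho(\mathrm{e}_t)$ is pointwise multiplication by $\mathrm{e}^{\mathrm{i}t\cdot x}$ on $\Lambda$, that is, $\varrho(\mathrm{e}_t)=\exp\!\big(\mathrm{i}\sum_{j=1}^{d}t_j\pos_j\big)$ for the mutually commuting position operators $\pos_1,\dots,\pos_d$ of the spectral triple. Put $U_s\defeq\exp\!\big(\mathrm{i}s\sum_{j}t_j\pos_j\big)$ and $b\defeq\varphi(a)$. On the common core $\dom(\sum_j t_j\pos_j)$ the map $s\mapsto U_s b U_s^{*}$ is differentiable with derivative $\mathrm{i}\,U_s\big(\sum_j t_j[\pos_j,b]\big)U_s^{*}$, whose norm is at most $\sum_j\abs{t_j}\,\norm{[\pos_j,b]}$ uniformly in $s$; integrating over $s\in[0,1]$ and using density yields
\[
    \norm{\Ad\sigma_t(\varphi(a))-\varphi(a)}\ \leq\ \sum_{j=1}^{d}\abs{t_j}\,\norm{[\pos_j,\varphi(a)]}.
\]
Thus $t\mapsto\Ad\sigma_t(\varphi(a))$ is norm-continuous at the origin, hence everywhere. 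By the cited criterion $\varphi(a)\in\CstRoe(\Lambda,\mathcal{H}_\Lambda)$ for every $a\in\mathcal{A}$; since $\CstRoe(\Lambda,\mathcal{H}_\Lambda)$ is norm-closed and $\mathcal{A}$ is dense in $A$, we even get $\varphi(A)\subseteq\CstRoe(\Lambda,\mathcal{H}_\Lambda)$.

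The main obstacle is the differentiation–integration argument in the second step. Because the $\pos_j$ are unbounded, one has to work on a common core, justify the product rule $\frac{\mathrm{d}}{\mathrm{d}s}\bigl(U_sbU_s^{*}\xi\bigr)=\mathrm{i}\,U_s\bigl[\sum_j t_j\pos_j,b\bigr]U_s^{*}\xi$ for $\xi$ in that core — using that $U_s$ preserves it and that $\varphi(a)$ maps $\dom D$ into $\dom D$ — and then propagate the uniform bound on the derivative to the displayed norm estimate by density. This is the standard fact that a derivation into $\Bdd(\mathcal{H})$ which is bounded on a core integrates to a Lipschitz conjugation flow; it requires care but no new ideas, and the remaining steps are direct invocations of the results quoted above.
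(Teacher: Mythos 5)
Your proof is correct and follows essentially the same route as the paper's: local compactness is read off from the definition of a locally compact position spectral triple, boundedness of the individual commutators \( [\pos_j,\varphi(a)] \) is extracted from the spectral-triple axiom, and a Duhamel-type integration yields the Lipschitz estimate for \( t\mapsto\Ad\sigma_t(\varphi(a)) \), after which the Ewert--Meyer characterisation of \( \CstRoe(\Lambda,\mathcal{H}_\Lambda) \) finishes the argument. The only (immaterial) differences are that you compress \( [D,\varphi(a)\otimes 1] \) against the vacuum vector to isolate each \( [\pos_j,\varphi(a)] \) where the paper uses commutators with \( a\otimes\rho^i \), and you treat the joint flow \( \exp(\mathrm{i}s\sum_j t_j\pos_j) \) at once rather than coordinate by coordinate.
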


\begin{proof}
Identifying \( A \) and its dense subalgebra
\( \mathcal{A} \) with their images under 
\( \varphi \), we may assume that \( \varphi\colon
A\to\Bdd(\ell^2(\Lambda)\otimes\mathcal{K}) \) 
is injective. Then we must show that every 
\( a\in\mathcal{A} \) is locally compact and
can be approximated by controlled operators. 
Being locally compact amounts to saying that \(
\braketvert{x}{\varphi(a)}{y}\in\Cpt(\mathcal{K}) \) for all \(
x,y\in\Lambda \), which always holds by 
\cref{lem:automatic_local_compactness} (providing that \( \xi \) \emph{is}
a position spectral triple.)

Now we show that every \( a\in\mathcal{A} \) is a norm limit of controlled
operators. For convenience, below we write \( \hat{\pos}_j \) for the
operator \(
\pos_j\otimes\id_\mathcal{K}\) on \( \ell^2(\Lambda)\otimes\mathcal{K} \).  

Since \( [a\otimes\rho^i,\hat{\pos}_j\otimes\gamma^j] \) is bounded 
for all \( a\in\mathcal{A} \)  and
\( i,j\in\{1,\dots,d\} \), we have \( [a,\hat{\pos}_j] \) 
is a bounded operator
for all \( j\in\{1,\dots,d\} \), i.e.~there exists \( C>0 \) such that \(
\norm*{[a,\hat{\pos}_j]}\leq C \).

By \cref{lem:controlled_as_continuity}, it suffices to show that
for all \( a\in\mathcal{A} \) and \( j=1,\dots,d \), the map
\[ 
    \R\to\Bdd(\ell^2(\Lambda,\mathcal{K})),\qquad t\mapsto
    \mathrm{e}^{\mathrm{i}t\hat{\pos}_j}a
    \mathrm{e}^{-\mathrm{i}t\hat{\pos}_j}
\]
is continuous in the norm topology on \(
\Bdd(\ell^2(\Lambda,\mathcal{K})) \). It follows from
\[ 
[\mathrm{e}^{it\hat{\pos}_j},a]
=\int_0^1\frac{\mathrm{d}}{\mathrm{d}s}\left(\mathrm{e}^{\mathrm{i}st\hat{\pos}_j}a\mathrm{e}^{\mathrm{i}(1-s)t\hat{\pos}_j}\right)\mathrm{d}s
=\mathrm{i}t\int_0^1
\mathrm{e}^{\mathrm{i}st\hat{\pos}_j}[\hat{\pos}_j,a]\mathrm{e}^{\mathrm{i}(1-s)t\hat{\pos}_j}\mathrm{d}s
\]
that
\[ 
    \norm*{[\mathrm{e}^{\mathrm{i}t\hat{\pos}_j},a]}\leq
    t\int_0^1\norm*{[\hat{\pos}_j,a]}\mathrm{d}s\leq Ct.
\]
Therefore,
\begin{align*}
\norm*{\mathrm{e}^{\mathrm{i}s\hat{\pos}_j}a\mathrm{e}^{-\mathrm{i}s\hat{\pos}_j}
-\mathrm{e}^{\mathrm{i}t\hat{\pos}_j}a\mathrm{e}^{-\mathrm{i}t\hat{\pos}_j}}
&=\norm*{\mathrm{e}^{\mathrm{i}t\hat{\pos}_j}\left(\mathrm{e}^{\mathrm{i}(s-t)\hat{\pos}_j}a\mathrm{e}^{-\mathrm{i}(s-t)\hat{\pos}_j}-a\right)\mathrm{e}^{-\mathrm{i}t\hat{\pos}_j}}
\\
&=\norm*{\mathrm{e}^{\mathrm{i}t\hat{\pos}_j}[\mathrm{e}^{\mathrm{i}(s-t)\hat{\pos}_j},a]\mathrm{e}^{-\mathrm{i}(s-t)\hat{\pos}_j}\mathrm{e}^{-\mathrm{i}t\hat{\pos}_j}}
\\
&\leq \norm*{[\mathrm{e}^{\mathrm{i}(s-t)\hat{\pos}_j},a]} \\
&\leq C(s-t).
\end{align*}
So the map \( t\mapsto \Ad{\sigma_t}(a) \) is continuous for all \( a\in
\mathcal{A} \), and thus \(
a\in\mathcal{A} \) is a norm limit of controlled operators.
\end{proof}

Thus if the \st-algebra \( \mathcal{A} \) in the position spectral triple
is dense in \( \CstRoe(\Lambda) \), then it defines a spectral triple over
\( \CstRoe(\Lambda) \). This amounts to choosing a suitable dense
\st-subalgebra inside \( \CstRoe(\Lambda) \). The spectral triple below 
has been implicitly used by Ewert and Meyer in the proof of
\cite{Ewert-Meyer:Coarse_geometry}*{Theorem 7}.

\begin{deflem}
Let \( \Lambda\subseteq\R^d \) be a discrete countable set. 
Let \( \CRoe(\Lambda) \) be the collection of operators
\( T\in\Bdd(\ell^2(\Lambda,\mathcal{K})) \) satisfying:
\begin{itemize}
\item \( T \) is controlled and locally compact\textup{;}
\item \( \sup_{y\in \Lambda}\sum_{x\in \Lambda}\norm*{T_{x,y}}<+\infty \) and
\( \sup_{x\in \Lambda}\sum_{y\in \Lambda}\norm*{T_{x,y}}<+\infty \).
\end{itemize}
Then
\begin{equation}\label{eq:Roe_position_spectral_triple}
  \xi^\Roe_\Lambda\defeq\left(\CRoe(\Lambda)\otimes\Cl_{0,d},\quad\ell^2(
  \Lambda)\otimes\mathcal{K}\otimes\bigwedge\nolimits^*\R^{d},\quad\sum_{j=1}^d\pos_j\otimes\id_\mathcal{K}\otimes\gamma^j\right)
\end{equation}
is a position spectral triple over \(
\CstRoe(\Lambda) \). 
\end{deflem}

\begin{proof}
Let \( T\in\CRoe(\Lambda) \), \( x,y\in \Lambda \). Then
\[
\braketvert{x}{[T,\pos_j\otimes\id_\mathcal{K}]}{y}
=\braketvert{x}{T(\pos_j\otimes\id_\mathcal{K})}{y}-\braketvert{x}{(\pos_j\otimes\id_\mathcal{K})T}{y}
=(y_j-x_j)T_{x,y}.
\]
Since \( T \) is controlled, there exists \( R>0 \) such that \(
\braketvert{x}{T}{y}=0 \) whenever \( \mathrm{d}(x,y)>R \). Thus     
\[ 
    \norm*{[T,\pos_j\otimes\id_\mathcal{K}]}\leq\sup_{y\in \Lambda}
    \sum_{x\in \Lambda}R\cdot\norm*{T_{x,y}}<+\infty.
\]
So \( T \) preserves the 
domain of \( \pos_j\otimes\id_\mathcal{K} \)
and boundedly commutes with it for
every \( j=1,\dots,d \).

Therefore, for any \( S\in\Cl_{0,d} \), \( T\otimes S \) preserves the domain of
\( \pos\defeq \sum_{j=1}^n\pos_j\otimes\id_\mathcal{K}\otimes\gamma^j \),
and boundedly commutes with \( \pos \). 
The local compactness of \( T \) implies that the operator
\[
T\left(1+\left(\sum_{j=1}^n\pos_j\otimes\id_\mathcal{K}\otimes\gamma^j\right)^2\right)^{-1}=T\left(1+\sum_{j=1}^n\pos_j^2\right)^{-1}\otimes\id_\mathcal{K}\otimes\id_{\bigwedge\nolimits^*\R^{d}}
\]
is compact. Therefore, \( \xi^\Roe_\Lambda \) is a position spectral triple.
\end{proof}

\subsubsection{Position spectral triple generates \( \KO_*(\R) \)}
Towards the end of this section, we shall prove that the position spectral
triple \( \xi_\Lambda^\Roe \) induces isomorphisms
\[ 
    \KK(\Cl_{n,0},\CstRoe(\Lambda))\xrightarrow{\sim}\KK(\Cl_{n,d},\R)
\]
for all \( n \). The proof is based on these observations.
\begin{enumerate}
\item If \( \Lambda=\Z^d \), then 
the KK-class of the position spectral triple
\[ 
[\xi^\Roe_{\Z^d}]\in\KK(\CstRoe(\Z^d)_\R\otimes\Cl_{0,d},\R)
\]
pulls back to the fundamental class of \(
\KK(\Cst(\Z^d)_\R\otimes\Cl_{0,d},\R) \) as in
\cref{thm:fundamental_class_Zd}. 
Therefore, for each \( n \), it induces an isomorphism
\[
  \KK(\Cl_{n,0},\CstRoe(\Z^d))\xrightarrow{\sim}\KK(\Cl_{n,d},\R).
\]
\item Any other Delone set \(
\Lambda\subseteq\R^d \) is coarsely equivalent to
\( \Z^d \). Therefore, \( \xi^\Roe_{\Lambda} \) induces the same map as \(
\xi^\Roe_{\Z^d} \) up to the isomorphism given by
\[
    \KK(\Cl_{n,0},\CstRoe(\Z^d))\simeq\KK(\Cl_{n,0},\CstRoe(\Lambda)).
\]
\end{enumerate}

\begin{lemma}\label{lem:Roe_Dirac_element_Z^d}
The position spectral triple for \( \Lambda=\Z^d \)
\begin{equation}\label{eq:spectral_triple_Roe_Z^d}
\xi^\Roe_{\Z^d}\defeq
\left(\CRoe(\Z^d)\otimes\Cl_{0,d},\quad\ell^2(\Z^d)\otimes\mathcal{K}\otimes\bigwedge\nolimits^*\R^d,\quad\sum_{j=1}^d\pos_j\otimes\id_\mathcal{K}\otimes\gamma^j\right),
\end{equation}
induces, for each \( n \), an isomorphism  
\[ 
  \KK(\Cl_{n,0},\CstRoe(\Z^d))\xrightarrow{\sim}\KK(\Cl_{n,d},\R).
\]
\end{lemma}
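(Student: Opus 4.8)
The plan is to reduce the claim to the well-understood periodic case by using the Fourier/position identifications already assembled in the excerpt, together with the stability of $\CstRoe$ under corner embeddings. First I would unpack the spectral triple $\xi^\Roe_{\Z^d}$ to identify its unbounded operator $\sum_{j=1}^d \pos_j\otimes\gamma^j$ on $\ell^2(\Z^d,\mathcal{K})\otimes\WR{d}$. Using $\mathcal{K}\simeq\mathcal{K}\otimes\R^N$ (and the discussion after \eqref{eq:K-theory_Roe_Cst-algebras} that $\Mat_N\CstRoe(\Z^d)\simeq\CstRoe(\Z^d)$ via corner embeddings which preserve the matrix-element description of controlled, locally compact operators), I would observe that $\CstRoe(\Z^d)$ contains the image of $\Mat_N(\R[\Z^d])$ under $e_N\circ\lambda^N$ for every $N$; in fact the union of these, over all $N$, sits densely inside a subalgebra of $\CRoe(\Z^d)$. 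The key point is that the position operator on the $\mathcal{K}$-valued $\ell^2$-space is, on the subspace $\ell^2(\Z^d)\otimes(\text{rank-}N\text{ corner of }\mathcal{K})$, unitarily the operator $\sum_j \pos_j\otimes 1$, i.e.\ precisely the operator appearing in the ``Dirac element'' spectral triple of the Example after \cref{def:position_spectral_triple}.

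Second, I would invoke \cref{lem:Dirac_element_position_operator} and \cref{lem:real_Fourier_transform}: the Fourier transform $U$ carries $\sum_j\pos_j\otimes\gamma^j$ on $\ell^2(\Z^d)\otimes\WR{d}$ to the Hodge--de Rham operator $\mathrm{d}+\mathrm{d}^*$ on $L^2(\bigwedge^*\mathrm{T}^*\T^d)$, and carries the regular representation of $\Cst(\Z^d)$ (hence of $\R[\Z^d]$) to the Clifford/pointwise-multiplication representation of $\Cont(\T^d)\otimes\CCl_{0,d}$. Thus the K-homology class of $\xi^\Roe_{\Z^d}$, restricted along the composite
\[
\Cont(\T^d)_\R\otimes\Cl_{0,d}\simeq\Mat_?(\R[\Z^d])\otimes\Cl_{0,d}\hookrightarrow\CstRoe(\Z^d)\otimes\Cl_{0,d}\to\Bdd\bigl(\ell^2(\Z^d,\mathcal{K})\otimes\WR{d}\bigr),
\]
is exactly Kasparov's Dirac element of the ``real'' manifold $\T^d$ in the sense of \cite{Kasparov:Equivariant_KK-theory}*{Definition--Lemma 4.2} (as already flagged in the Example). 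That Dirac element, paired against K-theory, realises the Poincar\'e duality isomorphism $\KK(\Cl_{d,0},\Cont(\T^d))\xrightarrow{\sim}\KO_{*-d}(\R)$, and the ``degree-$d$'' component of $\K_*(\Cont(\T^d))$ that it detects is exactly the generator surviving in $\CstRoe(\Z^d)$, because by \eqref{eq:K-theory_Roe_Cst-algebras} the target group $\KK(\Cl_{d,0},\CstRoe(\Z^d))$ is $\Z$ concentrated in a single degree.

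Third, I would assemble the diagram: the inclusion of the dense subalgebra into $\CstRoe(\Z^d)$ induces $\KK(\Cl_{d,0},\Cont(\T^d))\to\KK(\Cl_{d,0},\CstRoe(\Z^d))$, and pairing with $\xi^\Roe_{\Z^d}$ factors, via \cref{lem:Kasparov_product_functoriality}, through the Dirac-element pairing on $\Cont(\T^d)$. Since the latter pairing is an isomorphism onto $\Z$ on the relevant summand, and the map $\K_*(\Cst(\Z^d))\simeq\K_*(\Cont(\T^d))\to\K_*(\CstRoe(\Z^d))$ is known (by the coarse Mayer--Vietoris computation, or equivalently by Ewert--Meyer) to be the projection onto that single $\Z$-summand, the composite is an isomorphism; hence the pairing with $\xi^\Roe_{\Z^d}$, which is defined on all of $\KK(\Cl_{d,0},\CstRoe(\Z^d))\simeq\Z$, must itself be an isomorphism $\Z\to\Z$, i.e.\ $\pm1$ on a generator.

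I expect the main obstacle to be the third step: namely, verifying cleanly that the ``single surviving $\Z$-summand'' detected by the Dirac-element pairing on $\Cont(\T^d)$ is the \emph{same} copy of $\Z$ as the one identified in \eqref{eq:K-theory_Roe_Cst-algebras} under the inclusion $\Cst(\Z^d)\hookrightarrow\CstRoe(\Z^d)$. One way to make this precise without circularity is to compute the pairing on an explicit generator: take the class of the trivial line bundle twisted appropriately (the Bott class on $\T^d$), push it into $\CstRoe(\Z^d)$, and compute its index against $\xi^\Roe_{\Z^d}$ directly using $d$-summability and the local index formula, or more elementarily observe that $\mathrm{d}+\mathrm{d}^*$ on $\T^d$ has index $1$ against the fundamental class and that this index is stable under the corner embedding. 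Once this single index computation yields $\pm1$, surjectivity and injectivity onto $\Z$ follow formally, since a homomorphism $\Z\to\Z$ hitting a generator is an isomorphism.
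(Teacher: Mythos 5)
Your overall strategy is the paper's: identify the pullback of \( \xi^\Roe_{\Z^d} \) along \( \Cst(\Z^d)\hookrightarrow\CstRoe(\Z^d) \) with Kasparov's Dirac element of the ``real'' manifold \( \T^d \) via \cref{lem:real_Fourier_transform} and \cref{lem:Dirac_element_position_operator}, and then conclude by surjectivity plus the computation \( \KK(\Cl_{d,0},\CstRoe(\Z^d))\simeq\Z \) from \eqref{eq:K-theory_Roe_Cst-algebras}. The one place where your main line of argument is not yet a proof is exactly the ``obstacle'' you flag yourself: the assertion that \( \K_*(\Cst(\Z^d))\to\K_*(\CstRoe(\Z^d)) \) is ``the projection onto that single \( \Z \)-summand'' detected by the Dirac pairing is not something coarse Mayer--Vietoris gives you --- it only computes the target abstractly --- and assuming it would be circular, since matching up those two copies of \( \Z \) is the actual content of the lemma. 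Your proposed repair is the correct one and is precisely what the paper does: exhibit one explicit class pairing to \( \pm1 \), namely the image of the Bott generator of \( \KKR(\C,\Co(\R^{0,d})\otimes\CCl_{0,d}) \) under the open inclusion \( \R^{0,d}\hookrightarrow\T^d \); by \cite{Ewert-Meyer:Coarse_geometry}*{Proposition 11} this inclusion intertwines the Dirac elements of \( \R^{0,d} \) and \( \T^d \), and the pairing of the former with the Bott generator is the Bott periodicity isomorphism. That gives surjectivity of \( \xi^\Roe_{\Z^d}\colon\KK(\Cl_{d,0},\CstRoe(\Z^d))\to\Z \), and a surjective endomorphism of \( \Z \) is an isomorphism, as you say. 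One small correction to your fallback computation: the class that pairs to \( \pm1 \) is the Bott class, not the fundamental class; the index of \( \mathrm{d}+\mathrm{d}^* \) against the fundamental class of \( \T^d \) is the Euler characteristic \( \chi(\T^d)=0 \), so that version of the computation would not detect anything.
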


\begin{proof}
The proof comes from \cite{Ewert-Meyer:Coarse_geometry}*{Theorem 7,
Corollary 5}, which we sketch here.

First let \( n=d \) and \( \iota\colon \Cst(\Z^d)\hookrightarrow
\CstRoe(\Z^d) \) be the inclusion map induced by the obvious isometry
\( \ell^2(\Z^d)\hookrightarrow \ell^2(\Z^d)\otimes\mathcal{K} \). 
Then
\[ 
  \xi_{\Z^d,1}=\iota^*\xi_{\Z^d}^\Roe,
\]
that is, \( \xi_{\Z^d,1} \) is the pullback of the position spectral triple
over \( \CstRoe(\Z^d)_\R \). 
Thus the following diagram commutes by functorality of the Kasparov
product (cf.~\cref{lem:Kasparov_product_functoriality}):
\[ \begin{tikzcd}
\KK(\Cl_{d,0},\Cst(\Z^d)_\R) \arrow[rr,
"{\times_{\Cst(\Z^d)_\R}[\xi_{\Z^d,1}]}"] \arrow[rd, "{\times_A[\iota]}"']
&& \KK(\Cl_{d,d},\R)\simeq\Z \\
& \KK(\Cl_{d,0},\CstRoe(\Z^d)_\R) \arrow[ru, "{\times_{\CstRoe(\Z^d)_\R}[\xi^\Roe_{\Z^d}]}"'] 
\end{tikzcd} \]
and the horizontal arrow is a surjective group homomorphism by
\cref{thm:fundamental_class_Zd}. Thus the
bottom right arrow is a surjective group homomorphism \( \Z\to\Z \), hence
an isomorphism.

The isomorphisms of abelian groups \(
\KO_n(\CstRoe(\Z^d)_\R)\simeq\KO_{n-d}(\R) \) give a  
give an isomorphism of \(
\KO_*(\R) \)-modules \( \KO_*(\CstRoe(\Z^d)_\R)\simeq\KO_{*-d}(\R) \)
(cf.~the discussion after 
\cite{Ewert-Meyer:Coarse_geometry}*{Proposition 8}).
Functorality of the Kasparov product implies that \( \xi_{\Z^d}^\Roe \)
induces a \( \KO_*(\R) \)-module homomorphism \(
\KO_*(\CstRoe(\Z^d)_\R)\to\KO_{*-d}(\R) \), both sides being isomorphic to
free \( \KO_*(\R) \)-modules with a generator in degree \( -d \). The
commutative diagram above shows that \( [\xi_{\Z^d}^\Roe] \) maps
such a generator to a generator, hence a \( \KO_*(\R) \)-module
isomorphism. In particular, it follows that the induced maps 
\[ 
  \xi_{\Z^d}^\Roe\colon\underbrace{\KK(\Cl_{n,0},\CstRoe(\Z^d)_\R)}_{\simeq\KO_n(\CstRoe(\Z^d)_\R)}\to\underbrace{\KK(\Cl_{n,d},\R)}_{\simeq\KO_{n-d}(\R)}
\]
are isomorphisms of abelian groups for all \( n \).
\end{proof}

\begin{lemma}\label{lem:adjoints_induce_KK-equivalences}
Let \( L_1,L_2\subseteq\R^d \) be Delone sets such that \( L_1\cap
L_2=\emptyset \). Let \( L\defeq L_1\sqcup L_2 \). 
For \( i\in\{1,2\} \), we write\textup{:}
\begin{itemize}
\item \( \mathcal{H}_i\defeq \ell^2(L_i,\mathcal{K}) \) for the standard
ample \( L_i \)-module, and \(
\mathcal{H}\defeq \ell^2(L_1\sqcup L_2,\mathcal{K}) \) for the standard
ample \( L \)-module\textup{;}
\item \( \xi^\Roe_{L_i} \) and \( \xi^\Roe_L \)  for the position spectral
triples over \( \CstRoe(L_i) \) and \( \CstRoe(L) \) as in
\eqref{eq:Roe_position_spectral_triple}\textup{;}
\item \( \iota_i\colon L_i\hookrightarrow L \) for the isometric
embeddings\textup{;}
\item \( V_i\colon \mathcal{H}_i\hookrightarrow \mathcal{H}_0 \) for the
isometries induced by \( \iota_i \), that is, \(
V_i\ket{x}\defeq\ket{\iota_i(x)} \) for all \( x\in L_i \).
\end{itemize}
Then the following hold\textup{:}
\begin{enumerate}
\item \( V_i's \) are covering isometries for \( \iota_i \). Hence
\( \Ad_{V_i}\colon T\mapsto V_iTV_i^* \) maps \( \CstRoe(L_i) \) into
\( \CstRoe(L) \), and induces an isomorphism
\[ 
    \iota_{i*}\colon\KK(\Cl_{n,0},\CstRoe(L_i))\to\KK(\Cl_{n,0},\CstRoe(L)).
\]
\item The following diagram commutes (all arrows are given by taking the
Kasparov product)\textup{:}
\begin{equation}\label{eq:commutative_diagram_Z^d_Lambda}
\begin{tikzcd}
\KK(\Cl_{n,0},\CstRoe(L_1)) \arrow[r,"\iota_{1*}","\sim"']
\arrow[dr,"\xi^\Roe_{L_1}"'] &
\KK(\Cl_{n,0},\CstRoe(L))
\arrow[d,"\xi^\Roe_L"] &
\KK(\Cl_{n,0},\CstRoe(L_2)) \arrow[l,"\iota_{2*}"',"\sim"]
\arrow[dl,"\xi^\Roe_{L_2}"] \\
& \KK(\Cl_{n,d},\R) &
\end{tikzcd} 
\end{equation}
\end{enumerate}
\end{lemma}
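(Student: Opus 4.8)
The plan is to read part~(1) off \cref{prop:Roe_Cst-algebras_coarsely_invariant}, and to prove part~(2) by identifying the pullback of the K-homology class \( [\xi_0^\Roe] \) along \( \Ad_{V_i} \) with an orthogonal direct sum of the Fredholm module of \( \xi_i^\Roe \) and a degenerate module. For part~(1): since \( L_i \) is \( (r_i,R_i) \)-Delone, every point of \( \R^d \), and in particular every point of \( L_0 \), lies within distance \( R_i \) of a point of \( L_i \); as \( \iota_i \) is an isometric embedding, it is therefore coarsely surjective, hence a coarse equivalence, so \( L_i \) is coarsely dense in \( L_0 \). The isometry \( V_i \) with \( V_i\ket{x}=\ket{\iota_i(x)} \) is precisely the covering isometry of \cref{prop:Roe_Cst-algebras_coarsely_invariant} for this coarsely dense inclusion of discrete metric spaces with their standard ample modules: its support is \( \set{(\iota_i(x),x)\mid x\in L_i} \), on which \( \mathrm{d}(\iota_i(x),x)=0 \). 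Hence \( \Ad_{V_i} \) carries \( \CstRoe(L_i) \) into \( \CstRoe(L_0) \), and \cref{prop:Roe_Cst-algebras_coarsely_invariant} yields that \( \iota_{i*}=(\Ad_{V_i})_* \) is an isomorphism on \( \KK(\Cl_{k,0},-) \).

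For part~(2), apply \cref{lem:Kasparov_product_functoriality} with \( f=\Ad_{V_i}\colon\CstRoe(L_i)\to\CstRoe(L_0) \) and \( x=[\xi_0^\Roe]\in\KK(\CstRoe(L_0)\otimes\Cl_{0,d},\R) \): the triangle in \eqref{eq:commutative_diagram_Z^d_Lambda} corresponding to the index \( i \) commutes as soon as the pullback \( f^*x \) equals \( [\xi_i^\Roe] \) in \( \KK(\CstRoe(L_i)\otimes\Cl_{0,d},\R) \), and, since \( \iota_{1*} \) and \( \iota_{2*} \) are by construction \( (\Ad_{V_1})_* \) and \( (\Ad_{V_2})_* \), this gives the whole diagram. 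By the description of pullbacks following \cref{lem:Kasparov_product_functoriality}, \( f^*x \) is represented by the spectral triple on \( \ell^2(L_0,\mathcal{K})\otimes\WR{d} \) with operator \( D_0\defeq\sum_{j=1}^d\pos_j\otimes\gamma^j \) and with \( \CstRoe(L_i) \) acting through \( \Ad_{V_i} \). Let \( P_i\defeq V_iV_i^* \) be the orthogonal projection of \( \mathcal{H}_0 \) onto the closed span of \( \set{\ket{x}\mid x\in\iota_i(L_i)} \). It is diagonal in the position basis, hence commutes with each \( \pos_j \) and preserves the dense domain of rapidly decreasing functions, and \( \Ad_{V_i}(T)=P_i\,\Ad_{V_i}(T)\,P_i \) for all \( T \).

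Consequently the Fredholm module representing \( f^*x \) splits as an orthogonal direct sum along \( \mathcal{H}_0\otimes\WR{d}=(P_i\mathcal{H}_0\otimes\WR{d})\oplus(P_i^\perp\mathcal{H}_0\otimes\WR{d}) \), on each summand of which \( D_0 \) restricts to a self-adjoint operator. On \( P_i^\perp\mathcal{H}_0\otimes\WR{d} \) the algebra acts by zero, so this summand is a degenerate Kasparov module representing \( 0 \). On \( P_i\mathcal{H}_0\otimes\WR{d} \), conjugation by the real, grading-preserving unitary \( V_i\otimes\id\colon\ell^2(L_i,\mathcal{K})\otimes\WR{d}\xrightarrow{\sim}P_i\mathcal{H}_0\otimes\WR{d} \) carries \( \Ad_{V_i} \) back to the standard representation of \( \CstRoe(L_i) \) and \( D_0 \) to \( \sum_{j=1}^d\pos_j\otimes\gamma^j \) on \( \ell^2(L_i,\mathcal{K})\otimes\WR{d} \), because \( V_i^*\pos_jV_i\ket{x}=x_j\ket{x} \); this is exactly the Fredholm module of \( \xi_i^\Roe \) — the choice of dense \st-subalgebra is immaterial to the resulting K-homology class, and in any case \( \Ad_{V_i} \) maps \( \CRoe(L_i) \) into \( \CRoe(L_0) \) since it preserves the control radius, local compactness, and the row- and column-sum bounds. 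Hence \( f^*x=[\xi_i^\Roe] \), as required.

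The only step that is not purely formal is the direct-sum decomposition of the pulled-back module: one must check that the \emph{unbounded} operator \( D_0 \) respects the splitting \( \mathcal{H}_0=P_i\mathcal{H}_0\oplus P_i^\perp\mathcal{H}_0 \) at the level of domains, that each restriction is again a self-adjoint position operator, and that the complementary summand is genuinely degenerate so as to vanish in \( \KK \). Each of these is immediate once one observes that \( P_i \) is a coordinate projection commuting with the diagonal operators \( \pos_j \); everything else is bookkeeping with \cref{prop:Roe_Cst-algebras_coarsely_invariant} and \cref{lem:Kasparov_product_functoriality}.
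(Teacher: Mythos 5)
Your proposal is correct and takes essentially the same route as the paper: part (1) is read off \cref{prop:Roe_Cst-algebras_coarsely_invariant}, and part (2) rests on the observation that the position operators on \( \mathcal{H}_0 \) are diagonal with respect to the image of \( V_i \) and restrict there to the position operators of \( L_i \), so that \( \Ad_{V_i} \) pulls \( [\xi_0^\Roe] \) back to \( [\xi_i^\Roe] \) and the triangles commute by \cref{lem:Kasparov_product_functoriality}. Your explicit splitting \( \mathcal{H}_0=P_i\mathcal{H}_0\oplus P_i^\perp\mathcal{H}_0 \) with a degenerate complementary summand is, if anything, slightly more careful than the paper's decomposition \( \mathcal{H}_0=\mathcal{H}_1\oplus\mathcal{H}_2 \), which tacitly treats \( L_1 \) and \( L_2 \) as disjoint.
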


\begin{proof}
(1) follows from \cref{prop:Roe_Cst-algebra_coarsely_invariant} as both \(
L_1 \) and \( L_2 \)  are coarsely dense in \( L \). 
Since \( L_1\cap L_2=\emptyset \) and \( L=L_1\sqcup L_2 \), 
it follows that
\[ 
  \mathcal{H}_1\oplus\mathcal{H}_2\xrightarrow{\sim}\mathcal{H},\quad
  (\phi_1,\phi_2)\mapsto V_1\phi_1+V_2\phi_2.
\]
is a unitary isomorphism of Hilbert spaces.

Let \( \pos^i_j \) be the \( j \)-th position operator on \( \ell^2(L_i)
\), that is,
\[ 
  \pos_j^if(x)\defeq x_jf(x),\quad \phi\in\Cc(L_i),\;x\in L_i.
\]
Also write \( \pos_j \) for the \( j \)-th position operator on \(
\ell^2(L) \). Then \( \pos_j \) restricts to the \( j \)-th position
operator on \( \ell^2(L_i) \). That is, we have  
\[ 
    \pos_j=V_{1}\pos_j^{1}V_{1}^*\oplus
    V_{2}\pos_j^{2}V_{2}^*=\Ad_{V_1}(\pos_j^1)\oplus\Ad_{V_2}(\pos_j^2),
\]
and hence
\[ 
    \sum_{j=1}^d\pos_j\otimes\id_\mathcal{K}\otimes\gamma^j
    =\Ad_{V_1}\left(\sum_{j=1}^d\pos_j^1\otimes\id_\mathcal{K}\otimes\gamma^j\right)
    \oplus
    \Ad_{V_2}\left(\sum_{j=1}^d\pos_j^2\otimes\id_\mathcal{K}\otimes\gamma^j\right).
\]
Therefore, for \( i\in\{1,2\} \), 
the \st-homomorphism \( \Ad_{V_{j}} \)  pulls back the
class \( [\xi_L^\Roe]\in\KK(\CstRoe(L)\otimes\Cl_{0,d},\R) \) to the
class \( [\xi_{L_i}^\Roe]\in\KK(\CstRoe(L_i)\otimes\Cl_{0,d},\R) \).  
So both the left and the right triangle commutes 
by \cref{lem:Kasparov_product_functoriality}.
\end{proof}

\begin{theorem}\label{thm:position_spectral_triple_Roe_isomorphism}
For any Delone set \( \Lambda \), 
the position spectral triple \( \xi^\Roe_\Lambda \) 
induces, for each \( n \), an isomorphism
\[
    \KK(\Cl_{n,0},\CstRoe(\Lambda))\xrightarrow{\sim}\KK(\Cl_{n,d},\R).
\]
\end{theorem}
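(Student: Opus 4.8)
The plan is to reduce the statement to the case $\Lambda=\Z^d$, which is \cref{lem:Roe_Dirac_element_Z^d}, by a short diagram chase; the substantive work has already been carried out in \cref{lem:adjoints_induce_KK-equivalences} and \cref{lem:Roe_Dirac_element_Z^d}. The key point is that \cref{lem:adjoints_induce_KK-equivalences} is precisely the comparison device we need: for any two Delone sets in $\R^d$, the inclusions into their union induce isomorphisms on $\KK(\Cl_{d,0},\CstRoe(-))$ that intertwine the corresponding position spectral triples. So the whole argument amounts to gluing $\Lambda$ to a copy of $\Z^d$ and transporting the known isomorphism.

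First I would pick a translate of the standard lattice disjoint from $\Lambda$. Since $\{x-n\mid x\in\Lambda,\ n\in\Z^d\}$ is countable, we may choose $t\in\R^d\setminus\Z^d$ with $(\Z^d+t)\cap\Lambda=\emptyset$. The set $\Z^d+t$ is again Delone (a translate of a Delone set is Delone, \cref{def:Delone_set}), and $\xi^\Roe_{\Z^d+t}$ still induces an isomorphism onto $\Z$: the unitary $\ket*{x}\mapsto\ket*{x-t}$ is a covering isometry for the isometry $\Z^d+t\to\Z^d$, $x\mapsto x-t$, it carries $\CstRoe(\Z^d+t)$ isomorphically onto $\CstRoe(\Z^d)$, and it conjugates $\sum_{j}\pos_j\otimes\gamma^j$ on $\ell^2(\Z^d+t,\mathcal{K})\otimes\WR{d}$ to $\sum_{j}\pos_j\otimes\gamma^j$ plus the bounded self-adjoint operator $\sum_j t_j\,(1\otimes\gamma^j)$; a bounded perturbation of the Dirac operator leaves the K-homology class unchanged, so the claim for $\Z^d+t$ follows from \cref{lem:Roe_Dirac_element_Z^d}. (Alternatively one could deduce this from a further application of \cref{lem:adjoints_induce_KK-equivalences} to the pair $(\Z^d,\Z^d+t)$.)

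Next I would apply \cref{lem:adjoints_induce_KK-equivalences} to $L_1\defeq\Lambda$ and $L_2\defeq\Z^d+t$, which are disjoint by the choice of $t$, with $L_0\defeq L_1\cup L_2$. This yields isomorphisms $\iota_{1*}\colon\KK(\Cl_{d,0},\CstRoe(\Lambda))\to\KK(\Cl_{d,0},\CstRoe(L_0))$ and $\iota_{2*}\colon\KK(\Cl_{d,0},\CstRoe(\Z^d+t))\to\KK(\Cl_{d,0},\CstRoe(L_0))$, together with the commutative diagram \eqref{eq:commutative_diagram_Z^d_Lambda}. By the previous step, the map induced by $\xi^\Roe_{2}=\xi^\Roe_{\Z^d+t}$ is an isomorphism onto $\Z$; since it equals the map induced by $\xi^\Roe_0=\xi^\Roe_{L_0}$ precomposed with the isomorphism $\iota_{2*}$, the map induced by $\xi^\Roe_0$ is an isomorphism; and since the map induced by $\xi^\Roe_\Lambda=\xi^\Roe_1$ equals the map induced by $\xi^\Roe_0$ precomposed with the isomorphism $\iota_{1*}$, it too is an isomorphism onto $\KK(\Cl_{d,d},\R)\simeq\Z$, which is the assertion.

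I do not expect a genuine obstacle; the content is entirely packaged in \cref{lem:Roe_Dirac_element_Z^d} and \cref{lem:adjoints_induce_KK-equivalences}, and the only matters needing care are bookkeeping: arranging $\Lambda\cap(\Z^d+t)=\emptyset$ so that the internal Hilbert-space direct-sum decomposition in the proof of \cref{lem:adjoints_induce_KK-equivalences} is literally valid, and checking (as above) that passing to a translate of $\Z^d$ does not disturb the isomorphism. If one prefers to avoid translates entirely, one may instead invoke \cref{prop:Roe_Cst-algebras_coarsely_invariant} to replace the set-theoretic union by a disjoint-union model, at the expense of a slightly less standard ample module; the diagram chase is unaffected.
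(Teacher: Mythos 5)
Your proof is correct and follows essentially the same route as the paper: the paper simply takes \( L_1=\Z^d \) and \( L_2=\Lambda \) in \cref{lem:adjoints_induce_KK-equivalences} and reads off the result from the commutative diagram \eqref{eq:commutative_diagram_Z^d_Lambda}, without passing to a disjoint translate of \( \Z^d \). Your extra step is harmless and in fact tidies up a small imprecision in the proof of \cref{lem:adjoints_induce_KK-equivalences} (the decomposition \( \mathcal{H}_0=\mathcal{H}_1\oplus\mathcal{H}_2 \) is only literal when \( L_1\cap L_2=\emptyset \)), although each triangle of that diagram commutes regardless since the range of each \( V_i \) is an invariant subspace for the position operators.
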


\begin{proof}
The first step is to replace \( \Lambda \) by its translated copy 
that is disjoint from \( \Z^d \). To be precise, we note that the set
\[ 
  B\defeq \set{x-y\mid x\in\Lambda,y\in\Z^d}
\]
is countable, so the set \( \R^d\setminus B \) is non-empty. Choose any \(
v\in\R^d\setminus B \), then the Delone
set \( \Lambda'\defeq \Lambda-v \) is disjoint from \( \Z^d \). The
translation map
\[ 
  f\colon \Lambda\to\Lambda',\quad x\mapsto x-v
\]
induces a unitary isomorphism of Hilbert spaces 
\( V\colon \ell^2(\Lambda)\xrightarrow{\sim}\ell^2(\Lambda')
\) that covers \( f \). Hence \( V \) gives a \st-isomorphism 
\( \Ad_V\colon
\CstRoe(\Lambda)\xrightarrow{\sim}\CstRoe(\Lambda') \), which pulls back
the position spectral triple \( \xi_{\Lambda'}^\Roe \) to the position
spectral triple \( \xi_{\Lambda}^\Roe \). 

As a consequence, we may assume without loss of generality that \(
\Z^d\cap\Lambda=\emptyset \). Set \( L_1\defeq \Z^d \) and \( L_2\defeq
\Lambda \).     
Then the position spectral triple \( \xi^\Roe_{L_1}=\xi^\Roe_{\Z^d} \) 
induces an isomorphism 
\[ 
    \KK(\Cl_{n,0},\CstRoe(\Z^d))\xrightarrow{\sim}\KK(\Cl_{n,d},\R)
\]
by \cref{lem:Roe_Dirac_element_Z^d}. The isometry \( V_{1} \) induces an
isomorphism between the corresponding Kasparov groups by
\cref{lem:adjoints_induce_KK-equivalences}. Thus the commutativity of the
left triangle in \eqref{eq:commutative_diagram_Z^d_Lambda} implies that the
map
\[ 
    \KK(\Cl_{n,0},\CstRoe(\Z^d\sqcup\Lambda))\to\KK(\Cl_{n,d},\R)
\]
given by the position spectral triple \( \xi^\Roe_{\Z^d\sqcup\Lambda} \),
must be an isomorphism. Thus the position spectral triple \(
\xi_{L_2}^\Roe=\xi_\Lambda^\Roe \) must be an isomorphism as well, as it is 
a composition of isomorphisms
\[ 
    \KK(\Cl_{n,0},\CstRoe(\Lambda))
    \underset{\sim}{\xrightarrow{\iota_{1*}}}\KK(\Cl_{n,0},\CstRoe(\Z^d\sqcup\Lambda))
    \underset{\sim}{\xrightarrow{\xi_{L}^\Roe}}\KK(\Cl_{n,d},\R).
    \qedhere
\]
\end{proof}

\subsubsection{Remarks on uniform Roe \Cst-algebras}
\label{sec:uniform_Roe_Cst-algebra}

Let \( \Lambda \) be a discrete metric space. An operator \(
T\in\Bdd(\ell^2(\Lambda)) \) is called \emph{controlled}, if there exists
\( R>0 \) such that \( T_{x,y}\defeq \braketvert{x}{T}{y}=0 \) whenever \(
x,y\in\Lambda \) satisfy \(
\mathrm{d}(x,y)>R \).  The \emph{uniform Roe \Cst-algebra} of \( \Lambda
\), denoted by \( \CstuRoe(\Lambda) \), is the \Cst-algebra generated by
all controlled operators on \( \ell^2(\Lambda) \). 

Starting from Kubota \cite{Kubota:Controlled_topological_phases}, uniform
Roe \Cst-algebras have also been employed as the observable \Cst-algebras of
aperiodic topological insulators. As opposed to Roe \Cst-algebras, uniform
Roe \Cst-algebras have much more involved K-theory. For example, it is
known that \( \K_0(\CstuRoe(\Z)) \) is uncountable for \( d\geq 1 \)
(cf.~\cite{Spakula:K-theory_uniform_Roe_algebras}*{Example II.3.4}).  

We briefly remark here that the uniform Roe \Cst-algebra is also quite
``universal'' as it contains all ``tight-binding'' observable \Cst-algebras,
in the following sense:

\begin{proposition}
Assume that there is a real spectral triple of the form
\begin{equation}
\zeta_\Lambda\defeq\left(\mathcal{A}\otimes\Cl_{0,d},\quad\ell^2(\Lambda)\otimes\bigwedge\nolimits^*\R^{d},\quad\pos\defeq
\sum_{j=1}^d\pos_j\otimes\gamma^j\right),
\end{equation}
where:
\begin{itemize}
\item \( A \) is a real \Cst-algebra, 
which carries a \st-representation \(
\varphi\colon A\to\Bdd(\ell^2(\Lambda)) \); 
\( \mathcal{A}\subseteq A \) is a dense \st-subalgebra; 
\item \( \pos_j \) is the \( j \)-th position operator on
\(
\ell^2(\Lambda) \), given by
\[ 
    (\pos_j\phi)(x)\defeq x_j\phi(x),\qquad \phi\in\Cc(\Lambda),\;x=(x_1,\dots,x_d)\in\Lambda\subseteq\R^d;
\]
\item \( \gamma_1,\dots,\gamma_d \) are the generators of \( \Cl_{d,0} \),
represented on \( \bigwedge\nolimits^*\R^{d} \) via the standard representation. 
\end{itemize}
Then \( \varphi(A) \) is contained in \( \CstuRoe(\Lambda) \).  
\end{proposition}

We note that the auxiliary Hilbert space \( \mathcal{K} \) is now absent,
as opposed to the definition of a position spectral triple
(cf.~\cref{def:position_spectral_triple}). Thus \( A \) is merely
represented on the ``tight-binding'' Hilbert space \( \ell^2(\Lambda) \)
instead of \( \ell^2(\Lambda)\otimes\mathcal{K} \). 
Replacing \( A \) by \( A\otimes\Mat_N(\R) \) replaces \( \ell^2(\Lambda)
\) by \( \ell^2(\Lambda,\R^N)=\ell^2(\Lambda)\otimes\R^N \).

\begin{proof}
It follows from \cref{lem:controlled_as_continuity} and the proof of
\cref{thm:largest_position_spectral_triple} that given the spectral triple
\( \zeta_\Lambda \), then every \( a\in\mathcal{A} \) is represented as a
controlled operator on \( \ell^2(\Lambda) \). Thus \(
\varphi(\mathcal{A})\subseteq \CstuRoe(\Lambda) \) and hence \(
\varphi(A)\subseteq \CstuRoe(\Lambda) \).  
\end{proof}

Thus tight-binding models of topological insulators, e.g.~the
groupoid model \( \Cst(\mathcal{G}_\Lambda) \), 
is already
contained in the uniform Roe \Cst-algebra \( \CstuRoe(\Lambda) \), and
we have inclusions of \Cst-algebras and isometric embeddings of Hilbert
spaces which they act on:
\[ \begin{tikzcd}[column sep=small, row sep=0.1em]
\Cst(\mathcal{G}_\Lambda)\otimes\Mat_N(\R) \arrow[r, hook]
& \CstuRoe(\Lambda)\otimes\Mat_N(\R) \arrow[r, hook]
& \CstuRoe(\Lambda)\otimes\Cpt(\mathcal{K}) \arrow[r, hook]
& \CstRoe(\Lambda) \\
\curvearrowright & \curvearrowright & \curvearrowright & \curvearrowright
\\
\ell^2(\Lambda)\otimes\R^N \arrow[r, hook]
& \ell^2(\Lambda)\otimes\R^N \arrow[r, hook]
& \ell^2(\Lambda)\otimes\mathcal{K} \arrow[r, hook]
& \ell^2(\Lambda)\otimes\mathcal{K}.
\end{tikzcd} \]

As argued in \cite{Ewert-Meyer:Coarse_geometry}*{Section 1}, the occurrence
of uniform Roe \Cst-algebras might be considered as an artefact of the
tight-binding approximation.

\section{Robustness of topological phases}
Now we use the results from previous sections to compare the groupoid model
\( \Cst(\mathcal{G}_\Lambda) \) with the coarse-geometric model \(
\CstRoe(\Lambda) \) of topological phases on an aperiodic lattice \(
\Lambda \). Topological phases described by the K-theory of \(
\CstRoe(\Lambda) \) are called \emph{strong} in
\cite{Ewert-Meyer:Coarse_geometry}. We follow this terminology. 
Such phases are
stable under perturbations which perserve the conjugate-linear and/or
chiral symmtries of
the system, are locally compact and controlled, and do
not close the gap of the Hamiltonian. This is because such perturbations
lift to homotopies in the Roe \Cst-algebra, and hence preserve K-theory.

We will explain which phases, described by the K-theory of 
\( \Cst(\mathcal{G}_\Lambda) \), have the same robustness. This is done by 
mapping \( \Cst(\mathcal{G}_\Lambda) \) into a Roe \Cst-algebra 
\( \CstRoe(\omega) \) using the localised regular representations \(
\pi_\omega \) depending on a choice of \( \omega\in\Omega \),
whereas all of these
\( \omega \) yield isomorphic Roe \Cst-algebras as they are Delone sets in \( \R^d \). 
We also explain
that ``stacked'' topological phases, coming from lower-dimensional Delone
sets, are always weak. Both results can be viewed as generalisations of
\cite{Ewert-Meyer:Coarse_geometry}*{Section 4}, in which the Delone set \(
\Lambda \) is the periodic square lattice \( \Z^d \).  

\subsection{Position spectral triples detect strong topological phases}
The results of \cref{thm:position_spectral_triple_groupoid} and
\cref{thm:position_spectral_triple_Roe_isomorphism} allows us to compare
the groupoid model and the coarse-geometric model on the level of both
\Cst-algebras and K-theory (index pairing). The main theorem is the
following:

\begin{theorem}[Position spectral triples detect strong topological phases]
\label{thm:position_detects_strong_phases}
For every \( \omega\in\Omega_0 \), The following diagram commutes:
\begin{equation}\label{eq:position_detects_strong_phases}
\begin{tikzcd}[column sep=large, row sep=large]
\KK(\Cl_{n,0},\Cst(\mathcal{G}_\Lambda)) \arrow[r, "\bulkcycle"] 
\arrow[d,"\pi^N_\omega"'] \arrow[dr, "\xi^\Gpd_{\omega,N}"]
& \KK(\Cl_{n,d},\Cont(\Omega_0)) \arrow[d,
"\ev_\omega"] \\
\KK(\Cl_{n,0},\CstRoe(\omega)) \arrow[r, "\xi^\Roe_{\omega}", "\sim"'] &
\KK(\Cl_{n,d},\R).
\end{tikzcd} 
\end{equation}
where:
\begin{itemize}
\item
\( \pi_\omega^N\defeq \pi_\omega\otimes
e_N\colon\Cst(\mathcal{G}_\Lambda)\otimes\Mat_N(\R)\to\Bdd(\ell^2(\omega)\otimes\mathcal{K}) \)
is the entrywise extension of the localised regular representation at \(
\omega\in\Omega_0 \) as in \eqref{eq:pi_omega^N}\textup{;}
\item \( \bulkcycle \) is the bulk cycle as in 
\eqref{eq:bulk_cycle}\textup{;}
\item \( \xi^\Gpd_{\omega,N} \) is the spectral triple over \(
\Cst(\mathcal{G}_\Lambda)\otimes\Mat_N(\R) \) defined in
\eqref{eq:groupoid_position_spectral_triple}\textup{;} 
\( \xi^\Roe_\omega \) is the
spectral triple over \( \CstRoe(\omega) \) defined in
\eqref{eq:Roe_position_spectral_triple}\textup{;}
\item all arrows are given by taking Kasparov products.
\end{itemize}
\end{theorem}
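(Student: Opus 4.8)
The plan is to check separately the two triangles making up the square \eqref{eq:position_detects_strong_phases}. The upper-right triangle is essentially a reformulation of \cref{thm:position_spectral_triple_groupoid}: there the class of $\xi^\Gpd_{\omega,N}$ is identified with the triple Kasparov product of the bulk cycle $\bulkcycle\in\KK(\Cst(\mathcal{G}_\Lambda)\otimes\Cl_{0,d},\Cont(\Omega_0))$, the evaluation $\ev_\omega\in\KK(\Cont(\Omega_0),\R)$ and the identity bimodule $(\Mat_N(\R),\mathcal{K},0)$. By associativity of the Kasparov product, pairing a class $\alpha\in\KK(\Cl_{d,0},\Cst(\mathcal{G}_\Lambda))$ with $\xi^\Gpd_{\omega,N}$ therefore computes $\ev_\omega\bigl(\bulkcycle(\alpha)\bigr)$, once the Morita identification $\KK(\Cl_{d,0},\Mat_N\Cst(\mathcal{G}_\Lambda))\simeq\KK(\Cl_{d,0},\Cst(\mathcal{G}_\Lambda))$ supplied by the third factor is absorbed. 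So only the lower-left triangle requires work.

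For the lower-left triangle I would first observe that $\pi^N_\omega$ indeed lands in the Roe \Cst-algebra, so that $\xi^\Roe_\omega\circ\pi^N_\omega$ is meaningful: since $\xi^\Gpd_{\omega,N}$ is a locally compact position spectral triple by \cref{thm:position_spectral_triple_groupoid}, \cref{thm:largest_position_spectral_triple} gives $\pi^N_\omega(\Mat_N\Cc(\mathcal{G}_\Lambda))\subseteq\CstRoe(\omega,\ell^2(\omega,\mathcal{K}))$, hence $\pi^N_\omega(\Mat_N\Cst(\mathcal{G}_\Lambda))\subseteq\CstRoe(\omega)$ by continuity. Next I would apply the functoriality of the Kasparov product, \cref{lem:Kasparov_product_functoriality}, to the \st-homomorphism $\pi^N_\omega\colon\Mat_N\Cst(\mathcal{G}_\Lambda)\to\CstRoe(\omega)$ and the class $[\xi^\Roe_\omega]\in\KK(\CstRoe(\omega)\otimes\Cl_{0,d},\R)$ of the position spectral triple \eqref{eq:Roe_position_spectral_triple}. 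The pullback $(\pi^N_\omega)^*[\xi^\Roe_\omega]$ is then represented by the spectral triple with Hilbert space $\ell^2(\omega,\mathcal{K})\otimes\WR{d}$, operator $\sum_{j=1}^d\pos_j\otimes\gamma^j$, dense subalgebra $(\pi^N_\omega)^{-1}(\CRoe(\omega))\otimes\Cl_{0,d}$, and $\Mat_N\Cst(\mathcal{G}_\Lambda)\otimes\Cl_{0,d}$ acting through $\pi^N_\omega$ tensored with the standard representation of $\Cl_{0,d}$. Comparing with the definition \eqref{eq:groupoid_position_spectral_triple} of $\xi^\Gpd_{\omega,N}$, the two spectral triples carry the same Hilbert space, operator and representation, and differ only in the dense \st-subalgebra --- $\Mat_N(\Cc(\mathcal{G}_\Lambda))\otimes\Cl_{0,d}$ for $\xi^\Gpd_{\omega,N}$ versus $(\pi^N_\omega)^{-1}(\CRoe(\omega))\otimes\Cl_{0,d}$ for the pullback. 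Since the \K-homology class of a spectral triple depends only on its bounded transform together with the representation, the two define the same class as soon as $\Mat_N(\Cc(\mathcal{G}_\Lambda))\subseteq(\pi^N_\omega)^{-1}(\CRoe(\omega))$, i.e.\ as soon as $\pi_\omega(f)\in\CRoe(\omega)$ for every $f\in\Cc(\mathcal{G}_\Lambda)$.

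The remaining point is to verify this inclusion. By \eqref{eq:pi_omega} the matrix elements of $\pi_\omega(f)$ are $\braketvert{x}{\pi_\omega(f)}{y}=f(\omega-x,y-x)$ for $x,y\in\omega$; since $f$ has compact support there is $R_0>0$ with $f(\eta,a)=0$ whenever $\abs*{a}>R_0$, so $\pi_\omega(f)$ is controlled, and after tensoring with the corner embedding $e_N$ its matrix entries lie in $\Mat_N(\R)\subseteq\Cpt(\mathcal{K})$, so it is locally compact. For the remaining $\ell^1$-type conditions, the uniform discreteness of the Delone set $\omega$ gives, by a volume estimate, a constant $C$ bounding the number of points of $\omega$ in any ball of radius $R_0$ independently of the centre; combined with $\norm*{f}_\infty<\infty$ this yields $\sup_{y\in\omega}\sum_{x\in\omega}\norm*{\pi_\omega(f)_{x,y}}\leq C\norm*{f}_\infty<\infty$ and symmetrically with $x$ and $y$ exchanged. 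Hence $\pi_\omega(f)\in\CRoe(\omega)$, which concludes the argument. I expect the main obstacle to be the bookkeeping required to see that the representation (and $\Mat_N$-factor) appearing in the pullback of $\xi^\Roe_\omega$ is literally the data built into $\xi^\Gpd_{\omega,N}$; once that is matched, the commutativity reduces to the standard fact that shrinking a dense \st-subalgebra leaves the class of a spectral triple unchanged.
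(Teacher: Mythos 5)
Your proposal is correct and follows essentially the same route as the paper: the upper-right triangle is read off from \cref{thm:position_spectral_triple_groupoid}, and the lower-left triangle is obtained by combining \cref{thm:largest_position_spectral_triple} (to see that \( \pi^N_\omega \) lands in \( \CstRoe(\omega) \)) with the functoriality of the Kasparov product from \cref{lem:Kasparov_product_functoriality}. Your explicit verification that \( \pi_\omega(f)\in\CRoe(\omega) \) for \( f\in\Cc(\mathcal{G}_\Lambda) \) (finite propagation, matrix entries in \( \Mat_N(\R) \), and the \( \ell^1 \)-row/column bounds via uniform discreteness) is a welcome extra level of care that the paper's proof leaves implicit, but it does not change the structure of the argument.
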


\begin{proof} 
We claim that \(
\pi^N_\omega\colon\Cst(\mathcal{G}_\Lambda)\otimes\Mat_N(\R)
\to\Bdd(\ell^2(\omega)\otimes\mathcal{K})
\) maps into \( \CstRoe(\omega) \) defined by the standard ample \( \omega
\)-module \( \ell^2(\omega,\mathcal{K}) \).  
Whenever this holds, then it follows that the KK-class represented
by the spectral triple \eqref{eq:groupoid_position_spectral_triple}
\[ 
\xi_{\omega,N}^\Gpd\defeq\left(\Mat_N(\Cc(\mathcal{G}_\Lambda))\otimes\Cl_{0,d},\quad\ell^2(\omega)\otimes\mathcal{K}\otimes\bigwedge\nolimits^*\R^{d},\quad\sum_{j=1}^d\pos_j\otimes\id_\mathcal{K}\otimes\gamma^j\right)
\]
equals the pullback along \( \pi^N_\omega \) 
of the KK-class represented by
\eqref{eq:Roe_position_spectral_triple}:
\[ 
\xi^\Roe_\omega\defeq\left(\CRoe(\omega)\otimes\Cl_{0,d},
\quad\ell^2(\omega)\otimes\mathcal{K}\otimes\bigwedge\nolimits^*\R^{d},
\quad\sum_{j=1}^d\pos_j\otimes\id_\mathcal{K}\otimes\gamma^j\right).
\]
We have shown in \cref{thm:position_spectral_triple_groupoid} that \(
\xi^\Gpd_{\omega,N} \) is a position spectral triple. This
implies that \( \pi_\omega(\Cc(\mathcal{G}_\Lambda)\otimes\Mat_N(\R))
\subseteq\CstRoe(\Lambda) \) by \cref{thm:largest_position_spectral_triple}. Since
\( \Cc(\mathcal{G}_\Lambda)\otimes\Mat_N(\R) \) is dense in \(
\Cst(\mathcal{G}_\Lambda)\otimes\Mat_N(\R) \), the image
of \( \pi_\omega^N \) is contained in \( \CstRoe(\omega) \). Then
it follows from
the functoriality
of the Kasparov product in \cref{lem:Kasparov_product_functoriality} 
that the bottom left triangle commutes.

The commutativity of the top right triangle follows
from the construction of \( \xi^\Gpd_\omega \) in
\cref{thm:position_spectral_triple_groupoid}. Therefore, we conclude that
the entire diagram commutes.
\end{proof}

\begin{example}
Consider the periodic square lattice \( \Lambda=\Z^d \). Then \(
\Omega_\Lambda\simeq\T^d \) and \( \Omega_0=\{\omega\} \) is a singleton,
in which \( \omega \) can be chosen to be any point in \( \Omega_\Lambda \)
(cf.~\cite{Bourne-Mesland:Topological_phases}*{Example 2.7}). 
There is a unique evaluation point \( \ev_\omega\colon\{\omega\}\to\C \), 
which gives an isomorphism 
\( \KO^0(\operatorname{pt})=\KO_0(\R)\xrightarrow{\sim}\Z \). 
The spectral
triple \( \xi^\Gpd_{\omega,N} \) is nothing but the position spectral
triple \( \xi_{\Z^d,N} \) over the real group \Cst-algebra \( \Cst(\Z^d)_\R
\) in \eqref{eq:spectral_triple_group_Z^d}. And the diagram
\eqref{eq:position_detects_strong_phases} recovers 
the commutative diagram in 
\cite{Ewert-Meyer:Coarse_geometry}*{Theorem 7}.
\end{example}

\subsection{Stacked topological phases are weak}
It was explained in \cite{Ewert-Meyer:Coarse_geometry} why certain
topological phases described by \( \K_*(\Cst(\Z^d)) \) are ``weak''.
Let \( \varphi\colon\Z^{d}\to\Z^{d+1} \) is an injective group homomorphism. 
It induces a map 
\( \varphi_*\colon\Cst(\Z^{d})\to\Cst(\Z^{d+1}) \) and hence maps
\[ 
    \varphi_*\colon\K_*(\Cst(\Z^{d}))\to\K_*(\Cst(\Z^{d+1}))
\]
in K-theory. Topological phases that belong to the image of \( \varphi_* \)
can be thought of as ``stacked'' from the lower-dimensional lattice \( \Z^d
\). It was shown in \cite{Ewert-Meyer:Coarse_geometry}*{Proposition 10} 
that such phases are killed by the map 
\( \K_*(\Cst(\Z^{d+1}))\to\K_*(\CstRoe(\Z^{d+1})) \) 
induced by the inclusion \( \Cst(\Z^{d+1})\hookrightarrow\CstRoe(\Z^{d+1})
\). The proof is based on the fact that
\( \varphi \) also induces a map between the Roe \Cst-algebras of \( \Z^d
\) and \( \Z^{d+1} \), which factors through a flasque space if \( \varphi
\) is injective. Then this map induces zero maps in K-theory.

We shall explain in this section how this observation can be generalised to
aperiodic lattices and higher-dimensional cases.
That is, we consider
Delone sets of the form \( \Lambda\times L \),
where \( L \) is another Delone set. If \( L \) has dimension \( 1 \), then
we may think of \( \Lambda\times L \) as
``stacking'' \( \Lambda \) along the direction of \( L \)
(cf.~\cref{fig:stacked_topological_phases}).
This also gives a \st-homomorphism groupoid
 \st-homomorphism \( \varphi^\Gpd\colon\Cst(\mathcal{G}_\Lambda)
\to\Cst(\mathcal{G}_{\Lambda\times L}) \), which induces maps
\[ 
    \varphi^\Gpd_*\colon\KK(\Cl_{n,0},\Cst(\mathcal{G}_\Lambda))\to\KK(\Cl_{n,0},\Cst(\mathcal{G}_{\Lambda\times
    L}))
\]
for all \( n \). 
Such maps can be interpreted as ``stacking'' topological phases living on
\( \Lambda \) along the direction \( L \). We will show that such
topological phases are always weak, in the sense that they vanish in the
K-theory of Roe \Cst-algebras.

\begin{figure}[htbp]
\centering
\tdplotsetmaincoords{60}{120}
\begin{tikzpicture}[tdplot_main_coords]
\def\NumPlanes{6}
\def\Spacing{0.25}
\def\PlaneSize{2}
\draw[->, thick] (0,0,0) -- (3,0,0) node[anchor=north east]{$x$};
\draw[->, thick] (0,0,0) -- (0,3,0) node[anchor=north west]{$y$};
\draw[->, thick] (0,0,0) -- (0,0,3) node[anchor=south]{$z$};
\foreach \i in {0,...,\numexpr\NumPlanes-1\relax} {
    \begin{scope}
        \pgfmathsetmacro{\z}{\i * \Spacing}
        \draw[fill=blue!10, draw=blue!70] 
            (0,0,\z) -- (\PlaneSize,0,\z) -- (\PlaneSize,\PlaneSize,\z) -- (0,\PlaneSize,\z) -- cycle;
    \end{scope}
}
\end{tikzpicture}
\caption{``Stacked'' topological phases along the \( z \)-direction}
\label{fig:stacked_topological_phases}
\end{figure}
We fix the following notation. Let \( \Lambda\subseteq\R^p \) be an \(
(r_\Lambda,R_\Lambda)
\)-Delone set and \( L\subseteq\R^q \) be an \( (r_L,R_L) \)-Delone
set. We write 
\( \Omega_{\Lambda\times L} \), \( \Omega_{\Lambda} \) and \( \Omega_L
\) for the closure of the orbit of \( \Lambda\times L \), \( \Lambda \)
and \( L \). To distinguish, we write
\[
\Omega^{\Lambda\times L}_0\defeq\set{\mu\in\Omega_{\Lambda\times L}\mid0\in
\mu},\quad
\Omega^\Lambda_0\defeq\set{\omega\in\Omega_{\Lambda}\mid0\in\omega}
,\quad
\Omega^L_0\defeq\set*{\ell\in \Omega_L\;\middle|\;0\in\ell}
\]
for corresponding abstract transversals.

\begin{proposition}
The set
\[
    \Lambda\times L\defeq\set*{(x,a)\;\middle|\;x\in\Lambda,a\in
    L}\subseteq\R^{p+q}
\]
is also a Delone set.
\end{proposition}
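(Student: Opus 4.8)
The plan is to verify the two conditions in \cref{def:Delone_set} directly, manufacturing explicit radii for $\Lambda\times L$ out of the radii $(r_\Lambda,R_\Lambda)$ and $(r_L,R_L)$ by splitting the Euclidean norm on $\R^{m+n}=\R^m\times\R^n$ as $\abs*{(v,w)}^2=\abs*{v}^2+\abs*{w}^2$. Infinitude is immediate, since $\Lambda$ and $L$ are already infinite; discreteness will follow once uniform discreteness is established, so only the two Delone inequalities require attention.

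For the uniform-discreteness (the ``$r$'') condition, I would first record the elementary consequence of $\#(\Ball(x,r_\Lambda)\cap\Lambda)\le1$ for all $x\in\R^m$: distinct points of $\Lambda$ lie at distance at least $2r_\Lambda$ apart, because two points at distance $<2r_\Lambda$ would both lie in the open $r_\Lambda$-ball about their midpoint, while conversely no open ball of radius $r_\Lambda$ can contain two points at distance $\ge2r_\Lambda$. The same holds for $L$ with constant $2r_L$. Then, given distinct $(x,a),(x',a')\in\Lambda\times L$, at least one of $x\ne x'$, $a\ne a'$ holds, so $\abs*{(x,a)-(x',a')}^2=\abs*{x-x'}^2+\abs*{a-a'}^2\ge\min(2r_\Lambda,2r_L)^2$; hence no open ball of radius $r\defeq\min(r_\Lambda,r_L)$ in $\R^{m+n}$ contains two points of $\Lambda\times L$, which in particular shows $\Lambda\times L$ is discrete.

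For the relative-density (the ``$R$'') condition, I would fix $z=(y,b)\in\R^{m+n}$; the Delone property of each factor supplies $x\in\Lambda$ with $\abs*{y-x}<R_\Lambda$ and $a\in L$ with $\abs*{b-a}<R_L$, whence $(x,a)\in\Lambda\times L$ satisfies $\abs*{(y,b)-(x,a)}=\sqrt{\abs*{y-x}^2+\abs*{b-a}^2}<\sqrt{R_\Lambda^2+R_L^2}\eqdef R$, so $\#(\Ball(z,R)\cap(\Lambda\times L))\ge1$. This exhibits $\Lambda\times L$ as an $(r,R)$-Delone set with $r=\min(r_\Lambda,r_L)$ and $R=\sqrt{R_\Lambda^2+R_L^2}$. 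There is no genuine obstacle here; the only point needing a little care is the passage between the ``at most one point per open $r$-ball'' formulation of uniform discreteness used in \cref{def:Delone_set} and the honest lower bound on interpoint distances, which I would state once and then reuse for both factors.
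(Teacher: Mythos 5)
Your proof is correct and follows essentially the same route as the paper: both split the Euclidean norm on $\R^{m+n}=\R^m\times\R^n$ Pythagorean-style and produce the radii $r=\min(r_\Lambda,r_L)$ and $R=\sqrt{R_\Lambda^2+R_L^2}$. The only cosmetic difference is that for uniform discreteness the paper argues via the ball containment $\Ball((x,a),r)\subseteq\Ball(x,r_\Lambda)\times\Ball(a,r_L)$ and multiplicativity of point counts, whereas you pass through the equivalent ``interpoint distance at least $2r$'' reformulation; both are valid and yield the same constants.
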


\begin{proof}
Choose \( 0<r<R \) satisfying  
\( r<\min\{r_L,r_{\Lambda}\} \) and \( R>\sqrt{R_\Lambda^2+R_L^2} \).
Then for all
\( (x,a)\in\R^m\times\R^n \), we have 
\begin{align*}
\Ball((x,a),r)\subseteq\Ball(x,r_\Lambda)\times\Ball(a,r_L),\quad
\Ball((x,a),R)\supseteq\Ball(x,R_\Lambda)\times\Ball(a,R_L).
\end{align*}
Hence 
\begin{align*}
\#(\Ball((x,a),r)\cap(\Lambda\times
L))&\leq\#((\Ball(x,r_\Lambda)\times\Ball(a,r_L))\cap (\Lambda\times L)) \\
&=\#((\Ball(x,r_\Lambda)\cap\Lambda)\times(\Ball(a,r_L)\cap L)) \\
&=\#(\Ball(x,r_\Lambda)\cap\Lambda)\cdot\#(\Ball(a,r_L)\cap L) \\
&\leq 1,\\
\#(\Ball((x,a),R)\cap(\Lambda\times
L))&\geq\#((\Ball(x,R_\Lambda)\times\Ball(a,R_L))\cap (\Lambda\times L)) \\
&=\#((\Ball(x,R_\Lambda)\cap\Lambda)\times(\Ball(a,R_L)\cap L)) \\
&=\#(\Ball(x,R_\Lambda)\cap\Lambda)\cdot\#(\Ball(a,R_L)\cap L) \\
&\geq 1.
\end{align*}
So \( \Lambda\times L \) is an \( (r,R) \)-Delone set.
\end{proof}

A convenient way to define the ``stacking'' map is by describing the
\Cst-algebra of the product Delone set as a spatial tensor product. We
note the following well-known lemma:

\begin{lemma}\label{lem:Cst-algebra_product_groupoid}
Let \( \mathcal{G}_1\rightrightarrows\Omega_1 \) 
and \( \mathcal{G}_2\rightrightarrows\Omega_2 \) 
be \'etale groupoids. Let \( \mathcal{G}_1\times\mathcal{G}_2 \) 
be the product groupoid of \( \mathcal{G}_1 \) and \( \mathcal{G}_2 \). 
Then there is an isomorphism
\[
    \Phi\colon\Cst(\mathcal{G}_1)\otimes\Cst(\mathcal{G}_2)\xrightarrow{\sim}\Cst(\mathcal{G}_1\times\mathcal{G}_2),\quad
    \Phi(f_1\otimes f_2)(\gamma_1,\gamma_2)\defeq
    f_1(\gamma_1)f_2(\gamma_2),
\]
where \( \otimes \) refers to the spatial tensor product.
\end{lemma}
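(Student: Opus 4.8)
The plan is to show that $\Phi$, restricted to the convolution algebras, is an isometric \st-homomorphism for the reduced norms and has dense range, so that it extends to the claimed isomorphism of reduced groupoid \Cst-algebras.

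First I would check that $\Phi$ is a \st-homomorphism at the algebraic level, i.e.~on $\Cc(\mathcal{G}_1)\odot\Cc(\mathcal{G}_2)$. This is a direct computation: a pair $\bigl((\delta_1,\delta_2),(\epsilon_1,\epsilon_2)\bigr)$ is composable in $\mathcal{G}_1\times\mathcal{G}_2$ with product $(\gamma_1,\gamma_2)$ iff $\delta_i\epsilon_i=\gamma_i$ for $i=1,2$, and $(\gamma_1,\gamma_2)^{-1}=(\gamma_1^{-1},\gamma_2^{-1})$, whence
\[
\Phi(f_1\otimes f_2)*\Phi(g_1\otimes g_2)=\Phi\bigl((f_1*g_1)\otimes(f_2*g_2)\bigr),\qquad \Phi(f_1\otimes f_2)^*=\Phi(f_1^*\otimes f_2^*).
\]
The image of $\Phi$ is the linear span of the elementary tensors $f_1\otimes f_2$; this is a \st-subalgebra of $\Cc(\mathcal{G}_1\times\mathcal{G}_2)$ that separates the points of $\mathcal{G}_1\times\mathcal{G}_2$ and is nowhere vanishing. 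A Stone--Weierstrass argument, together with a standard cutoff to keep supports inside a fixed compact set, shows it is dense in $\Cc(\mathcal{G}_1\times\mathcal{G}_2)$ for the inductive-limit topology, hence dense in $\Cst(\mathcal{G}_1\times\mathcal{G}_2)$.

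The key step is the isometry. For $x\in\mathcal{G}_1^0$ and $y\in\mathcal{G}_2^0$ the source fibre $(\mathcal{G}_1\times\mathcal{G}_2)_{(x,y)}$ is canonically $(\mathcal{G}_1)_x\times(\mathcal{G}_2)_y$, so the localised Hilbert space decomposes as $\mathcal{H}_{(x,y)}\simeq\mathcal{H}_x\otimes\mathcal{H}_y$, and under this identification the localised regular representation satisfies $\pi_{(x,y)}\bigl(\Phi(f_1\otimes f_2)\bigr)=\pi_x(f_1)\otimes\pi_y(f_2)$. Since the reduced norm on $\Cst(\mathcal{G}_i)$ equals $\sup_{x}\norm{\pi_x(\,\cdot\,)}$, the representation $\bigoplus_{x}\pi_x$ is faithful on $\Cst(\mathcal{G}_i)$; as a tensor product of faithful representations is faithful for the minimal tensor norm, every $t\in\Cst(\mathcal{G}_1)\odot\Cst(\mathcal{G}_2)$ satisfies
\[
\norm{t}_{\min}=\Bigl\|\Bigl(\bigoplus_x\pi_x\otimes\bigoplus_y\pi_y\Bigr)(t)\Bigr\|=\sup_{x,y}\norm{(\pi_x\otimes\pi_y)(t)}=\sup_{x,y}\norm{\pi_{(x,y)}(\Phi(t))}=\norm{\Phi(t)},
\]
the last expression being the reduced norm of $\Phi(t)$ in $\Cst(\mathcal{G}_1\times\mathcal{G}_2)$. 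Hence $\Phi$ is isometric, and it therefore extends to an isometric \st-isomorphism $\Cst(\mathcal{G}_1)\otimes\Cst(\mathcal{G}_2)\xrightarrow{\sim}\Cst(\mathcal{G}_1\times\mathcal{G}_2)$; surjectivity follows since the extension has closed range containing the dense set established above.

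I expect the main obstacle to be the careful justification of $\pi_{(x,y)}\circ\Phi=\pi_x\otimes\pi_y$ — concretely, that the Hilbert module $L^2(\mathcal{G}_1\times\mathcal{G}_2)$ over $\Co(\mathcal{G}_1^0\times\mathcal{G}_2^0)\simeq\Co(\mathcal{G}_1^0)\otimes\Co(\mathcal{G}_2^0)$ is the external tensor product $L^2(\mathcal{G}_1)\otimes L^2(\mathcal{G}_2)$ (checked on elementary tensors, where the $\Co(\mathcal{G}_i^0)$-valued inner products multiply), and that the regular representation of the product groupoid is the external tensor product of the two regular representations. Once this is in place, the appearance of the \emph{minimal} tensor product — rather than the maximal one, which would in general require an amenability hypothesis — is exactly the statement that the reduced norm is computed by the faithful fibrewise representation $\bigoplus_x\pi_x$. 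The Stone--Weierstrass density and the elementary groupoid-convolution identities are routine.
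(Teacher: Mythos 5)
Your proof is correct. The paper states this lemma as well known and supplies no proof of its own, so there is no argument to compare against; what you give is the standard one, and it correctly isolates the two essential points: that the localised regular representation of $\mathcal{G}_1\times\mathcal{G}_2$ at $(x,y)$ is $\pi_x\otimes\pi_y$ under $\ell^2\bigl((\mathcal{G}_1)_x\times(\mathcal{G}_2)_y\bigr)\simeq\ell^2\bigl((\mathcal{G}_1)_x\bigr)\otimes\ell^2\bigl((\mathcal{G}_2)_y\bigr)$, and that the reduced norm is $\sup_x\norm{\pi_x(\cdot)}$ while the minimal norm may be computed in any pair of faithful representations --- which is exactly why the \emph{minimal} tensor product appears without any amenability hypothesis. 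The only implicit hypothesis worth flagging is Hausdorffness of the groupoids (needed for your Stone--Weierstrass density argument and for $\Cc(\mathcal{G}_1)\odot\Cc(\mathcal{G}_2)\subseteq\Cc(\mathcal{G}_1\times\mathcal{G}_2)$ to behave as stated), which holds for the groupoids of Delone sets to which the lemma is applied.
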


\begin{proof}
This lemma is well-known, and we sketch a proof here.

Write \( L^2(\mathcal{G}_1) \), \( L^2(\mathcal{G}_2) \) and \(
L^2(\mathcal{G}_1\times\mathcal{G}_2) \) for the Hilbert \Cst-modules over \(
\Co(\Omega_1) \), \( \Co(\Omega_2) \) and \( \Co(\Omega_1\times\Omega_2)
\) as in \cref{sec:regular_representation}. We claim that \(
L^2(\mathcal{G}_1\times\mathcal{G}_2) \) is
canonically isomorphic to the external tensor product Hilbert \Cst-module
\( L^2(\mathcal{G}_1)\otimes L^2(\mathcal{G}_2) \) 
under the identification
\[
  \Psi\colon
  \Co(\Omega_1)\otimes\Co(\Omega_2)\simeq\Co(\Omega_1\times\Omega_2),\quad
  \Psi(f\otimes g)(x,y)\defeq f(x)\cdot g(y).
\] 

To see this, let \( f_1,f_2\in\Cc(\mathcal{G}_1) \) and 
\( g_1,g_2\in\Cc(\mathcal{G}_2) \),
Then their tensor products \( f_1\otimes g_1,
f_2\otimes g_2 \) belong to \( \Cc(\mathcal{G}_1\times\mathcal{G}_2) \). 
Given \( x\in\Omega_1 \) and \(
y\in\Omega_2 \), we have
\begin{gather*} 
    \braket*{f_1,f_2}(x)=\sum_{\gamma\in
    r^{-1}(x)}f_1(\gamma)f_2(\gamma),\quad
    \braket*{g_1,g_2}(y)=\sum_{\eta\in r^{-1}(y)}g_1(\eta) g_2(\eta),
\end{gather*}
and
\begin{align*}
\braket*{f_1\otimes g_1,f_2\otimes g_2}(x,y)&=\sum_{(\gamma,\eta)\in
    r^{-1}(x,y)}(f_1\otimes g_1)(\gamma,\eta)(f_2\otimes
    g_2)(\gamma,\eta)\\
    &=\sum_{\gamma\in r^{-1}(x)}\sum_{\eta\in
    r^{-1}(y)}f_1(\gamma)g_2(\gamma)g_1(\eta)g_2(\eta) \\
    &=\braket{f_1,f_2}(x)\cdot\braket{g_1,g_2}(y).
\end{align*}

Thus \( L^2(\mathcal{G}_1\times \mathcal{G}_2)\simeq
L^2(\mathcal{G}_1)\otimes L^2(\mathcal{G}_2) \) as Hilbert \Cst-modules
over \( \Co(\Omega_1\times\Omega_2)\simeq\Co(\Omega_1)\otimes\Co(\Omega_2)
\). This induces an injective 
\st-homomorphism (cf.~\citelist{\cite{Blackadar:K-theory}*{Section
13.5} \cite{Lance:Hilbert_Cst-modules}*{Chapter 4}}):
\[ 
  \Phi\colon\Bdd_{\Co(\Omega_1)}(L^2(\mathcal{G}_1))\otimes\Bdd_{\Co(\Omega_2)}(L^2(\mathcal{G}_2))\to\Bdd_{\Co(\Omega_1\times\Omega_2)}(L^2(\mathcal{G}_1\times\mathcal{G}_2)).
\]
In particular, for 
\( f_1\in\Cc(\mathcal{G}_1) \) and \( f_2\in\Cc(\mathcal{G}_2) \),
viewed as convolution 
operators on \( L^2(\mathcal{G}_1) \) and \( L^2(\mathcal{G}_2)
\) via their regular representations \( \pi_i\colon
\Cst(\mathcal{G}_i)\to\Bdd_{\Co(\Omega_i)}(\mathcal{G}_i) \) for \( i=1,2
\), then \( \Phi \) sends \( f_1\otimes f_2 \) to the
convolution operator \( f_1\otimes f_2\in\Cc(\mathcal{G}_1\times
\mathcal{G}_2) \) on \( L^2(\mathcal{G}_1\times \mathcal{G}_2) \). 
Passing to the norm closure, we find that \( \Phi \) maps \(
\Cst(\mathcal{G}_1)\otimes\Cst(\mathcal{G}_2) \) into \(
\Cst(\mathcal{G}_1\times\mathcal{G}_2) \), and has dense range as \(
\Cc(\mathcal{G}_1\times\mathcal{G}_2) \) is dense in \(
\Cst(\mathcal{G}_1\times \mathcal{G}_2) \). Thus \( \Phi \) is both
injective and has dense range, hence a \st-isomorphism
\[ 
  \Cst(\mathcal{G}_1)\otimes\Cst(\mathcal{G}_2)\xrightarrow{\sim}\Cst(\mathcal{G}_1\times \mathcal{G}_2).\qedhere
\]
\end{proof}

\begin{corollary}\label{cor:Cst-algebra_Lambda_times_L}
There is a canonical isomorphism
\[ 
    \Cst(\mathcal{G}_{\Lambda\times
    L})\simeq\Cst(\mathcal{G}_\Lambda)\otimes\Cst(\mathcal{G}_L).
\]
\end{corollary}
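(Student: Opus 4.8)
The plan is to deduce the corollary from \cref{lem:Cst-algebra_product_groupoid} by exhibiting an isomorphism of étale groupoids $\mathcal{G}_{\Lambda\times L}\simeq\mathcal{G}_\Lambda\times\mathcal{G}_L$. The crux is to identify the hull of the product Delone set with the product of the two hulls.

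First I would show that the Cartesian-product map $(\omega,\ell)\mapsto\omega\times\ell$ — sending a pair of point sets to their Cartesian product, viewed again as a pure point measure on $\R^{m+n}$ — is a homeomorphism $\Omega_\Lambda\times\Omega_L\xrightarrow{\sim}\Omega_{\Lambda\times L}$. The argument preceding the corollary (that $\Lambda\times L$ is $(r,R)$-Delone for suitable $r,R$) applies verbatim to show $\omega\times\ell\in\Del_{(r,R)}(\R^{m+n})$ for every $\omega\in\Omega_\Lambda$ and $\ell\in\Omega_L$, so the map is well defined into a compact metrizable space. It is continuous in the local topology: if $\omega_k\to\omega$ and $\ell_k\to\ell$, then $\omega_k$ and $\omega$ agree on a large ball of $\R^m$ up to a small translation, likewise for the $L$-factors, and hence $\omega_k\times\ell_k$ and $\omega\times\ell$ agree on a large ball of $\R^{m+n}$ up to a small translation. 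Its image is compact, hence closed; it contains every translate $(\Lambda\times L)+(a,b)=(\Lambda+a)\times(L+b)$, so it contains $\Omega_{\Lambda\times L}$; conversely, choosing $a_k\in\R^m$ and $b_k\in\R^n$ independently with $\Lambda+a_k\to\omega$ and $L+b_k\to\ell$ exhibits $\omega\times\ell$ as a limit of orbit points, so the image is exactly $\Omega_{\Lambda\times L}$. The map is injective because a nonempty product set in $\R^{m+n}$ recovers its two factors as its coordinate projections; being a continuous bijection of compact Hausdorff spaces, it is a homeomorphism. Finally $(0,0)\in\omega\times\ell$ if and only if $0\in\omega$ and $0\in\ell$, so this homeomorphism restricts to a homeomorphism of abstract transversals $\Omega^{\Lambda\times L}_0\simeq\Omega^\Lambda_0\times\Omega^L_0$.

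This homeomorphism is $\R^{m+n}=\R^m\times\R^n$-equivariant, since $(\omega\times\ell)-(a,b)=(\omega-a)\times(\ell-b)$, so it promotes to an isomorphism of action groupoids $\Omega_{\Lambda\times L}\rtimes\R^{m+n}\simeq(\Omega_\Lambda\rtimes\R^m)\times(\Omega_L\rtimes\R^n)$, all structure maps being computed one coordinate block at a time under the identification. Restricting both sides to the product transversal $\Omega^\Lambda_0\times\Omega^L_0$ and using that for a product groupoid one has $(\mathcal{G}_1\times\mathcal{G}_2)^{X_1\times X_2}_{X_1\times X_2}=(\mathcal{G}_1)^{X_1}_{X_1}\times(\mathcal{G}_2)^{X_2}_{X_2}$ (the source and range maps of a product being the products of the source and range maps), one gets $\mathcal{G}_{\Lambda\times L}\simeq\mathcal{G}_\Lambda\times\mathcal{G}_L$ as topological groupoids, both of which are étale. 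Then \cref{lem:Cst-algebra_product_groupoid} applies and yields
\[
\Cst(\mathcal{G}_{\Lambda\times L})\simeq\Cst(\mathcal{G}_\Lambda\times\mathcal{G}_L)\simeq\Cst(\mathcal{G}_\Lambda)\otimes\Cst(\mathcal{G}_L),
\]
the composite being the map induced on convolution algebras by $(f_1\otimes f_2)(\gamma_1,\gamma_2)=f_1(\gamma_1)f_2(\gamma_2)$, which is the asserted canonical isomorphism.

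The step demanding the most care is the homeomorphism $\Omega_{\Lambda\times L}\simeq\Omega_\Lambda\times\Omega_L$: one must verify that Cartesian products of point sets behave well under weak-$*$/local limits — that a limit of products is the product of the limiting factors, with no spurious points created — and that this identification is compatible with both the $\R^{m+n}$-action and the choice of transversal. Everything downstream of that point is formal bookkeeping.
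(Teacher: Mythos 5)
Your proof is correct and follows essentially the same route as the paper: both reduce to \cref{lem:Cst-algebra_product_groupoid} by identifying \( \Omega_{\Lambda\times L} \) with \( \Omega_\Lambda\times\Omega_L \) as \( \R^{m+n} \)-spaces and then restricting to the product transversal \( \Omega_0^\Lambda\times\Omega_0^L \). Your handling of the identification (an explicit Cartesian-product map shown to be a continuous injection of compact Hausdorff spaces with the right image) is if anything more careful than the paper's two-inclusion argument, but it is the same idea.
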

\begin{proof}
By \cref{lem:Cst-algebra_product_groupoid}, it suffices to prove that the
\'etale groupoids \(
\mathcal{G}_{\Lambda\times L} \) and 
\( \mathcal{G}_{\Lambda}\times\mathcal{G}_L \) are isomorphic. 

We claim that \( \Omega_{\Lambda\times L}\simeq\Omega_\Lambda\times\Omega_L
\) are isomorphic as \( \R^{m+n} \)-spaces. We have
\[
    \Lambda\times
    L\in\Del_{(r_\Lambda,R_\Lambda)}(\R^m)\times\Del_{(r_L,R_L)}(\R^n).
\]
The translation action of \( \R^{p+q}=\R^p\times\R^q \) preserves \(
\Del_{(r_\Lambda,R_\Lambda)}(\R^p)\times\Del_{(r_L,R_L)}(\R^q) \). 
By \cref{prop:spaces_of_measures}, both \(
\Del_{(r_\Lambda,R_\Lambda)}(\R^m)\times\Del_{(r_L,R_L)}(\R^n) \) 
are closed,
hence their product is a closed subset of \( \mathcal{M}(\R^{p+q}) \).
Since the product of the orbits of \( \Lambda \) and \( L \) is contained
in \( \Del_{(r_\Lambda,R_\Lambda)}(\R^p)\times\Del_{(r_L,R_L)}(\R^q) \), we
conclude that \( \Omega_{\Lambda\times L} \), \(
\Omega_\Lambda\times\Omega_L \) are contained in it. 
Hence, \( \Omega_{\Lambda\times L}\subseteq\Omega_\Lambda\times\Omega_L
\) as it is the smallest closed subset in \(
\Del_{(r_\Lambda,R_\Lambda)}(\R^p)\times\Del_{(r_L,R_L)}(\R^q) \), which 
contains the product of the joint orbits of \( \Lambda \) and \( L \).

We claim that \( \Omega_{\Lambda\times
L}\supseteq\Omega_\Lambda\times\Omega_L \) as well. Let \(
(\omega,\ell)\in\Omega_\Lambda\times\Omega_L \), then there exists nets \(
(x_\alpha)_{\alpha\in A}\subseteq\R^p \) and \( (a_\beta)_{\beta\in
B}\subseteq\R^q \) such that 
\[ \Lambda+x_\alpha\to\omega\quad\text{and}\quad L+a_\beta\to\ell \quad\text{in the
weak\st-topology.} \] 
Then the net \( (x_\alpha,a_\beta)_{\alpha\times\beta\in A\times B} \),
where \( A\times B \) carries the lexicographic order, satisfies  
\[
    (\Lambda+x_\alpha,L+a_\beta)_{(\alpha,\beta)\in A\times
    B}\to(\omega,\ell)\quad\text{in the weak\st-topology}.
\]
Then we conclude that \( \Omega_{\Lambda\times
L}\simeq\Omega_\Lambda\times\Omega_L \) as \(
\R^{p+q} \)-spaces.

As a consequence, the action groupoids \( \Omega_{\Lambda\times
L}\rtimes\R^{p+q} \) and \( (\Omega_{\Lambda}\times\Omega_L)\rtimes\R^{p+q}
\) are isomorphic topological groupoids, and have homeomorphic abstract
transversals \( \Omega_0^{\Lambda\times L} \) and \(
\Omega_0^{\Lambda}\times\Omega_0^L \). This implies an isomorphism of 
\'etale groupoids
\[
\mathcal{G}_{\Lambda\times L}\simeq\mathcal{G}_\Lambda\times
\mathcal{G}_L,\quad (\mu,z)\mapsto((\mu_\Lambda,x),(\mu_L,a))
\]
where \( \mu_\Lambda \) and \( \mu_L \)  are 
the images of \( \mu \) under the coordinate
projections \( \R^p\times\R^q\to\R^p \) and \( \R^p\times\R^q\to\R^q \). 
This finishes the proof.
\end{proof}

\cref{cor:Cst-algebra_Lambda_times_L} allows us to define a
\st-homomorphism between groupoid \Cst-algebras.  Since \( \mathcal{G}_L \)
is an \'etale groupoid with compact unit space, its \Cst-algebra \(
\Cst(\mathcal{G}_L) \) has a unit, given by the constant function \(
1_{\Omega_0^L} \) on the unit space.  Define
\begin{equation}\label{eq:phi^Gpd}
\varphi^\Gpd\colon\Cst(\mathcal{G}_\Lambda)\to\Cst(\mathcal{G}_\Lambda)\otimes\Cst(\mathcal{G}_L),\quad
\varphi^\Gpd(f)\defeq f\otimes 1_{\Omega_0^L}.  \end{equation} Then \(
\varphi^\Gpd \) gives a \st-homomorphism \(
\Cst(\mathcal{G}_\Lambda)\to\Cst(\mathcal{G}_{\Lambda\times L}) \) under
the isomorphism \( \Cst(\mathcal{G}_{\Lambda\times
L})\simeq\Cst(\mathcal{G}_\Lambda)\otimes\Cst(\mathcal{G}_L) \) in
\cref{cor:Cst-algebra_Lambda_times_L}. We write \(
\varphi^\Gpd_N\colon\Mat_N(\Cst(\mathcal{G}_\Lambda))\to\Mat_N(\Cst(\mathcal{G}_{\Lambda\times
L})) \) for its entrywise extension to matrix algebras.

Now we pass to Roe \Cst-algebras.
Let \( \omega\in\Omega_0^\Lambda \) and \( \ell\in\Omega_0^L \).
Define their Roe \Cst-algebras \( \CstRoe(\omega) \) and \(
\CstRoe(\omega\times\ell) \) using their standard ample modules \(
\ell^2(\omega,\mathcal{K}) \) and \( \ell^2(\omega\times\ell,\mathcal{K})
\). Let
\begin{equation}\label{eq:phi^Roe}
\varphi^\Roe\colon\Bdd(\ell^2(\omega,\mathcal{K}))\to\Bdd(\ell^2(\omega\times\ell,\mathcal{K})),\quad
T\mapsto T\otimes\id_{\ell^2(\ell)}.
\end{equation}
Then
\[ 
    \braketvert*{x,a}{\varphi^\Roe(T)}{y,b}
    =\braketvert*{x}{T}{y}\cdot\braket*{a\mid b}=T_{x,y}\cdot\delta_{a,b}.
\]
So \( \varphi^\Roe(T) \) is locally compact or controlled if and only if \( T \) is
locally compact or controlled
(cf.~\cref{ex:discrete_space_locally_compact_controlled}). Thus \(
\varphi^\Roe \) maps \( \CstRoe(\omega) \) into \(
\CstRoe(\omega\times\ell) \).     

\begin{lemma}\label{lem:flasque_map}
The map \( \varphi^\Roe\colon\CstRoe(\omega)\to\CstRoe(\omega\times\ell) \)
induces zero maps
\[
    \varphi^\Roe_*\colon\KK(\Cl_{n,0},\CstRoe(\omega))\to\KK(\Cl_{n,0},\CstRoe(\omega\times\ell))
\]
for all \( n \).
\end{lemma}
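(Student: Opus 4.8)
The plan is to generalise \cite{Ewert-Meyer:Coarse_geometry}*{Proposition 10}, which handles the periodic case $\Cst(\Z^{d})\hookrightarrow\Cst(\Z^{d+1})$. There, the regular representation turns that inclusion into the map $T\mapsto T\otimes\id_{\ell^{2}(\Z)}$ on Roe $\Cst$-algebras, and one kills it by splitting $\Z$ into two coarse half-lines and observing that the two resulting ``half'' maps factor through Roe $\Cst$-algebras of flasque spaces. I will run exactly this argument with $\Z$ replaced by the Delone set $\ell$.

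Since $\ell$ is an infinite discrete subset of $\R^{n}$, we have $n\geq 1$. Identify the last coordinate of $\R^{m+n}=\R^{m}\times\R^{n}$ with the last coordinate $a_{n}$ of the $\R^{n}$-factor, and set $\ell^{+}\defeq\set*{a\in\ell\mid a_{n}\geq 0}$ and $\ell^{-}\defeq\ell\setminus\ell^{+}$. Using the covering radius $R_{L}$ of the $(r_{L},R_{L})$-Delone set $\ell$ (\cref{def:Delone_set}): for any $x$ in the closed half-space $H^{+}\defeq\set*{a\in\R^{n}\mid a_{n}\geq 0}$, the $R_{L}$-ball around $x+R_{L}e_{n}$ meets $\ell$ in a point which then automatically lies in $\ell^{+}$ and within distance $2R_{L}$ of $x$; hence $\ell^{+}$ is $2R_{L}$-dense in $H^{+}$, and symmetrically $\ell^{-}$ is $2R_{L}$-dense in $H^{-}\defeq\set*{a\in\R^{n}\mid a_{n}\leq 0}$. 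Thus $\omega\times\ell^{\pm}$ is an isometrically embedded, coarsely dense subset of the Euclidean half-space $\R^{m}\times H^{\pm}$, hence coarsely equivalent to it. As $\R^{m}\times H^{\pm}$ is flasque (translate the $H^{\pm}$-coordinate), $\K_{*}(\CstRoe(\R^{m}\times H^{\pm}))=0$ \cite{Willett-Yu:Higher_index_theory}; combining \cite{Willett-Yu:Higher_index_theory}*{Proposition 4.3.5} with \cref{deflem:covering_isometry}, applied to the coarse equivalence $\omega\times\ell^{\pm}\hookrightarrow\R^{m}\times H^{\pm}$ of ample modules, then gives $\KK(\Cl_{k,0},\CstRoe(\omega\times\ell^{\pm}))=0$ for all $k$.

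Now decompose $\ell^{2}(\omega\times\ell,\mathcal{K})=\ell^{2}(\omega\times\ell^{+},\mathcal{K})\oplus\ell^{2}(\omega\times\ell^{-},\mathcal{K})$ along $\ell=\ell^{+}\sqcup\ell^{-}$. The computation that $\varphi^{\Roe}$ maps into $\CstRoe(\omega\times\ell)$ applies verbatim to show that $T\mapsto T\otimes\id_{\ell^{2}(\ell^{\pm})}$ defines a $\Cst$-homomorphism $\beta^{\pm}\colon\CstRoe(\omega)\to\CstRoe(\omega\times\ell^{\pm})$; post-composing with the $\Cst$-homomorphism $\CstRoe(\omega\times\ell^{\pm})\to\CstRoe(\omega\times\ell)$ induced via \cref{deflem:covering_isometry} by the covering isometry $\ket*{z}\mapsto\ket*{z}$ of the isometric embedding $\omega\times\ell^{\pm}\hookrightarrow\omega\times\ell$ gives $\Cst$-homomorphisms $\alpha^{\pm}\colon\CstRoe(\omega)\to\CstRoe(\omega\times\ell)$ with mutually orthogonal ranges and $\varphi^{\Roe}=\alpha^{+}+\alpha^{-}$. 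Orthogonality of the ranges gives $[\varphi^{\Roe}]=[\alpha^{+}]+[\alpha^{-}]$ in $\KK(\CstRoe(\omega),\CstRoe(\omega\times\ell))$, so by bi-additivity of the Kasparov product $\varphi^{\Roe}_{*}=\alpha^{+}_{*}+\alpha^{-}_{*}$ on each $\KK(\Cl_{k,0},-)$; and $\alpha^{\pm}_{*}$ factors through $\KK(\Cl_{k,0},\CstRoe(\omega\times\ell^{\pm}))=0$ (it is $\beta^{\pm}_{*}$ followed by the map induced by a covering isometry). Hence $\varphi^{\Roe}_{*}=0$.

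The one step that requires genuine care is the middle one: that $\omega\times\ell^{\pm}$ is coarsely \emph{equivalent}, not merely coarsely embeddable, to a flasque half-space — this is exactly where the covering radius of $\ell$ near the hyperplane $\set*{a_{n}=0}$ is used — and that coarse invariance of Roe $\K$-theory may be invoked across the two kinds of ample module in play (the discrete module $\ell^{2}(\omega\times\ell^{\pm},\mathcal{K})$ versus the continuum module $L^{2}$ of the half-space). The remaining ingredients are routine: well-definedness of $\beta^{\pm}$ reuses the computation already carried out for $\varphi^{\Roe}$, and additivity of $\KK(\Cl_{k,0},-)$ over orthogonal direct sums of $\Cst$-homomorphisms is standard.
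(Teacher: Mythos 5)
Your proposal is correct and takes essentially the same route as the paper: split \( \ell \) along the sign of its last coordinate, write \( \varphi^\Roe \) as a sum of maps factoring through \( \CstRoe(\omega\times\ell^{\pm}) \), and kill these in K-theory via flasqueness. The only (harmless) difference is that the paper asserts directly that the spaces \( \omega\times\ell_{\pm} \) are flasque, whereas you deduce the vanishing of their Roe K-theory from a coarse equivalence with the flasque half-spaces \( \R^{m}\times H^{\pm} \), using the relative density of the Delone set \( \ell \); both justifications are valid.
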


\begin{proof}
Let
\[ 
    \ell_+\defeq\ell\cap(\R^{q-1}\times\R_{\geq 0}),\quad
    \ell_-\defeq\ell\cap(\R^{q-1}\times\R_{< 0}),
\]
then there are isometries
\[
    V_\pm\colon\ell^2(\omega\times\ell_\pm,\mathcal{K})\to\ell^2(\omega\times\ell,\mathcal{K}),
\]
which induce a diagonal embedding
\begin{align*} 
\Bdd(\ell^2(\omega\times\ell_+,\mathcal{K}))\oplus\Bdd(\ell^2(\omega\times\ell_-,\mathcal{K}))&\to\Bdd(\ell^2(\omega\times\ell,\mathcal{K})),\\
(T_+,T_-)&\mapsto \Ad_{V_+}T_++\Ad_{V_-}T_-.
\end{align*}
Let \( T\in\Bdd(\ell^2(\omega,\mathcal{K})) \). Then \(
T\otimes\id_{\ell^2(\ell)}=\Ad_{V_+}(T\otimes\id_{\ell^2(\ell_+)})
+\Ad_{V_-}(T\otimes\id_{\ell^2(\ell_-)}) \). 
Therefore, the map \( \varphi^\Roe \) agrees with the following
composition:
\begin{multline*} 
    \Bdd(\ell^2(\omega,\mathcal{K}))\to\Bdd(\ell^2(\omega\times\ell_+,\mathcal{K}))\oplus\Bdd(\ell^2(\omega\times\ell_-,\mathcal{K}))\to\Bdd(\ell^2(\omega\times\ell,\mathcal{K})),\\
    T\mapsto
    \left(T\otimes\id_{\ell^2(\ell_+)},T\otimes\id_{\ell^2(\ell_-)}\right)
    \mapsto\Ad_{V_+}\left(T\otimes\id_{\ell^2(\ell_+)}\right)+\Ad_{V_-}\left(T\otimes\id_{\ell^2(\ell_-)}\right).
\end{multline*}
A similar argument as the lines below \eqref{eq:phi^Roe} shows that \(
T\mapsto(T\otimes\id_{\ell^2(\ell_+)},T\otimes\id_{\ell^2(\ell_-)}) \) maps
\( \CstRoe(\omega) \) into \(
\CstRoe(\omega\times\ell_+)\oplus\CstRoe(\omega\times\ell_-) \). 
Both spaces \( \omega\times\ell_\pm \) 
are flasque (cf.~\cite{Roe:Index_theory_coarse_geometry}*{Definition 9.3}).
Hence, \( \CstRoe(\omega\times\ell_\pm) \) have vanishing K-theory by an
Eilenberg swindle argument,
cf.~\cite{Roe:Index_theory_coarse_geometry}*{Theorem 9.4}. Thus \(
\varphi^\Roe_* \) vanishes as it factors through zero.
\end{proof}

\begin{theorem}[Stacked topological phases are
weak]\label{thm:stacked_phases_are_weak}
Let \( \xi^\Gpd_{\omega\times\ell,N} \) be the spectral triple over \(
\Cst(\mathcal{G}_{\Lambda\times L})\otimes\Mat_N(\R) \) defined in
\eqref{eq:groupoid_position_spectral_triple}. Then for each \( n \),
its induced map
\[ 
    \KK(\Cl_{n,0},\Cst(\mathcal{G}_{\Lambda\times L}))\to\KK(\Cl_{n,p+q},\R)
\]
vanishes on the image of
\[
    \varphi^\Gpd_*\colon\KK(\Cl_{n},\Cst(\mathcal{G}_\Lambda))\to
    \KK(\Cl_{n},\Cst(\mathcal{G}_{\Lambda\times L})).
\]
\end{theorem}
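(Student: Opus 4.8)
The plan is to reduce the statement to the flasqueness phenomenon of \cref{lem:flasque_map} by passing through Roe $\Cst$-algebras. First, apply \cref{thm:position_detects_strong_phases} with $\Lambda$ replaced by the product Delone set $\Lambda\times L$ and with the point $\mu=\omega\times\ell$: since $\Omega_0^{\Lambda\times L}\simeq\Omega_0^\Lambda\times\Omega_0^L$ (proof of \cref{cor:Cst-algebra_Lambda_times_L}), $\mu$ is a legitimate point of $\Omega_0^{\Lambda\times L}$, and $\omega\times\ell$ is itself a Delone set in $\R^{m+n}$. The theorem then says that the map $\xi^\Gpd_{\omega\times\ell,N}\colon\KK(\Cl_{m+n,0},\Cst(\mathcal{G}_{\Lambda\times L}))\to\Z$ factors as
\[
\KK(\Cl_{m+n,0},\Cst(\mathcal{G}_{\Lambda\times L}))\xrightarrow{(\pi^N_{\omega\times\ell})_*}\KK(\Cl_{m+n,0},\CstRoe(\omega\times\ell))\xrightarrow{\xi^\Roe_{\omega\times\ell}}\Z,
\]
where $\CstRoe(\omega\times\ell)$ is defined by the standard ample module $\ell^2(\omega\times\ell,\mathcal{K})$. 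So it is enough to show that $(\pi^N_{\omega\times\ell})_*\circ\varphi^\Gpd_*$ is the zero homomorphism into $\KK(\Cl_{m+n,0},\CstRoe(\omega\times\ell))$.

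The key step is the commutativity of the square of $*$-homomorphisms
\[
\begin{tikzcd}
\Mat_N(\Cst(\mathcal{G}_\Lambda)) \arrow[r,"\varphi^\Gpd_N"] \arrow[d,"\pi^N_\omega"'] & \Mat_N(\Cst(\mathcal{G}_{\Lambda\times L})) \arrow[d,"\pi^N_{\omega\times\ell}"] \\
\CstRoe(\omega) \arrow[r,"\varphi^\Roe"'] & \CstRoe(\omega\times\ell),
\end{tikzcd}
\]
in which both vertical arrows land in the indicated Roe $\Cst$-algebras by \cref{thm:position_detects_strong_phases} (cf.\ \cref{thm:largest_position_spectral_triple}), and $\varphi^\Roe$ is the map $T\mapsto T\otimes\id_{\ell^2(\ell)}$ from \eqref{eq:phi^Roe}. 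It suffices to check this on a generator $f\otimes S$ with $f\in\Cc(\mathcal{G}_\Lambda)$ and $S\in\Mat_N(\R)$. Under the isomorphisms $\mathcal{G}_{\Lambda\times L}\simeq\mathcal{G}_\Lambda\times\mathcal{G}_L$ and $\Cst(\mathcal{G}_{\Lambda\times L})\simeq\Cst(\mathcal{G}_\Lambda)\otimes\Cst(\mathcal{G}_L)$ of \cref{cor:Cst-algebra_Lambda_times_L}, the element $\varphi^\Gpd(f)=f\otimes 1_{\Omega_0^L}$ is the function $((\omega',x),(\ell',a))\mapsto f(\omega',x)\cdot 1_{\Omega_0^L}(\ell',a)$; and $1_{\Omega_0^L}$, being the unit of the étale-groupoid $\Cst$-algebra $\Cst(\mathcal{G}_L)$, is the indicator of the unit space, so $1_{\Omega_0^L}(\ell',a)=1$ if $a=0$ and $0$ otherwise. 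Using \eqref{eq:pi_omega} in the form $\braketvert*{x}{\pi_\omega(f)}{y}=f(\omega-x,y-x)$, together with $\mu-(x,a)=(\omega-x)\times(\ell-a)$ and $(y,b)-(x,a)=(y-x,b-a)$, one computes for $x,y\in\omega$ and $a,b\in\ell$ that
\[
\braketvert*{(x,a)}{\pi_{\omega\times\ell}(\varphi^\Gpd(f))}{(y,b)}=f(\omega-x,y-x)\cdot\delta_{a,b}=\braketvert*{x}{\pi_\omega(f)}{y}\cdot\delta_{a,b}.
\]
Multiplying by $S$ and comparing with the matrix-element formula \eqref{eq:pi_omega^N} and the identity recorded after \eqref{eq:phi^Roe}, both $\pi^N_{\omega\times\ell}(\varphi^\Gpd_N(f\otimes S))$ and $\varphi^\Roe(\pi^N_\omega(f\otimes S))$ have $(x,a),(y,b)$--matrix element $\pi^N_\omega(f\otimes S)_{x,y}\cdot\delta_{a,b}$. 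Hence the two $*$-homomorphisms agree on the dense subalgebra $\Mat_N(\Cc(\mathcal{G}_\Lambda))$, and therefore on all of $\Mat_N(\Cst(\mathcal{G}_\Lambda))$.

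Putting this together with functoriality of the Kasparov product (\cref{lem:Kasparov_product_functoriality}), the composite $\xi^\Gpd_{\omega\times\ell,N}\circ\varphi^\Gpd_*$ equals
\[
\KK(\Cl_{m+n,0},\Cst(\mathcal{G}_\Lambda))\xrightarrow{(\pi^N_\omega)_*}\KK(\Cl_{m+n,0},\CstRoe(\omega))\xrightarrow{\varphi^\Roe_*}\KK(\Cl_{m+n,0},\CstRoe(\omega\times\ell))\xrightarrow{\xi^\Roe_{\omega\times\ell}}\Z.
\]
By \cref{lem:flasque_map} the middle arrow $\varphi^\Roe_*$ is zero, so the entire composite vanishes; in particular it vanishes on $\im(\varphi^\Gpd_*)$, as claimed. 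The only genuinely delicate point is the bookkeeping in the commuting square above: one has to keep track of the groupoid isomorphism $\mathcal{G}_{\Lambda\times L}\simeq\mathcal{G}_\Lambda\times\mathcal{G}_L$, of the unitary $\ell^2(\omega\times\ell,\mathcal{K})\simeq\ell^2(\omega,\mathcal{K})\otimes\ell^2(\ell)$ needed to make sense of $\varphi^\Roe$, and of the fact that $\varphi^\Gpd$ inserts the \emph{unit} $1_{\Omega_0^L}$, which on the coarse side is exactly what produces the ``constant in the $L$-direction'' operator $T\otimes\id_{\ell^2(\ell)}$; granting this, the result is immediate from the previously established \cref{thm:position_detects_strong_phases} and \cref{lem:flasque_map}.
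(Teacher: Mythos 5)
Your proposal is correct and follows essentially the same route as the paper: both verify the commutativity of the square relating \( \varphi^\Gpd_N \), \( \pi^N_\omega \), \( \pi^N_{\omega\times\ell} \) and \( \varphi^\Roe \) by the same matrix-element computation on generators \( f\otimes S \), and then combine \cref{thm:position_detects_strong_phases} with the vanishing of \( \varphi^\Roe_* \) from \cref{lem:flasque_map}. No gaps; the bookkeeping points you flag at the end are exactly the ones the paper's proof also relies on.
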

\begin{proof}
We claim that the following diagram commutes:
\[ \begin{tikzcd}
\Mat_N\Cst(\mathcal{G}_\Lambda) 
\arrow[r,"\varphi^\Gpd_N"] \arrow[d,"\pi^N_\omega"] &
\Mat_N\Cst(\mathcal{G}_{\Lambda\times L}) 
\arrow[d,"\pi^N_{\omega\times\ell}"] \\
\CstRoe(\omega) \arrow[r,"\varphi^\Roe"] &
\CstRoe(\omega\times\ell).
\end{tikzcd} \]
To see this, let \( f\in\Cc(\mathcal{G}_\Lambda) \) and \( S\in\Mat_N(\R)
\). It follows from \eqref{eq:phi^Gpd} and
\eqref{eq:phi^Roe} that:
\begin{align*} 
\braketvert*{x,a}{\varphi^\Roe\pi_\omega^N(f\otimes
S)}{y,b}&=\braketvert*{x}{\pi_\omega(f)}{y}\cdot\delta_{a,b}\cdot S\\
&=f(\omega-x,y-x)\cdot\delta_{a,b}\cdot S;\\
\braketvert*{x,a}{\pi_{\omega\times\ell}^N\varphi^\Gpd(f\otimes
S)}{y,b}&=\varphi^\Gpd(f)(\omega\times\ell-(x,a),(y-x,b-a))\cdot S\\
&=f(\omega-x,y-x)\cdot 1_{\Omega_0^L}(\ell-a,b-a)\cdot S \\
&=f(\omega-x,y-x)\cdot\delta_{a,b}\cdot S
\end{align*}
holds for all \( x,y\in\omega \) and \( a,b\in\ell \). Therefore the maps \(
\varphi^\Roe\circ\pi_\omega^N \) coincides with \(
\pi_{\omega\times\ell}^N\circ\varphi^\Gpd \) on all elements of the form \(
f\otimes S \), and hence for \(
\Cst(\mathcal{G}_\Lambda)\otimes\Mat_N(\R) \). That is, the diagram above
commutes.

The commutative diagram above gives a commutative diagram in K-theory.
This, together with the bottom left triangle of
\eqref{eq:position_detects_strong_phases}, gives the following commutative
diagram:
\[ 
\begin{tikzcd}
\KK(\Cl_{n,0},\Cst(\mathcal{G}_\Lambda)) \arrow[r,"\varphi^\Gpd_*"] \arrow[d,"\pi_\omega"] &
\KK(\Cl_{n,0},\Cst(\mathcal{G}_{\Lambda\times L})) \arrow[d,"\pi_{\omega\times\ell}"]
\arrow[r,"\xi^\Gpd_{\omega\times\ell}"] & \KK(\Cl_{n,p+q},\R) \\
\KK(\Cl_{n,0},\CstRoe(\omega)) \arrow[r,"\varphi^\Roe_*"] &
\KK(\Cl_{n,0},\CstRoe(\omega\times\ell)) \arrow[ru,"\xi^\Roe_{\omega\times\ell}"'] &
\end{tikzcd}
\]
where \( \xi^\Gpd_{\omega\times\ell} \) and \( \xi^\Roe_{\omega\times\ell}
\) are the corresponding position spectral triples over \(
\Cst(\mathcal{G}_{\Lambda\times L}) \) and \( \CstRoe(\omega\times\ell) \).
It follows that the composition map \(
\xi^\Gpd_{\omega\times\ell}\circ\varphi_*
=\xi^\Roe_{\omega\times\ell}\circ\varphi^\Roe_*\circ\pi_\omega
\)
factors through
\[ 
    \varphi^\Roe_*\colon\KK(\Cl_{n,0},\CstRoe(\omega))\to\KK(\Cl_{n,0},\CstRoe(\omega\times\ell)),
\]
which vanishes by \cref{lem:flasque_map}. Therefore \(
\xi^\Gpd_{\omega\times\ell}\circ\varphi^\Gpd_* \) must be the zero map.
\end{proof}

\begin{example}
Let \( \Lambda=\Z^d \) and \( L=\Z \). Then \( \mathcal{G}_\Lambda
\simeq\Z^d \) and \( \mathcal{G}_L\simeq\Z \) as topological groupoids. 
The unit spaces \( \Omega_0^{\Lambda} \) 
and \( \Omega_0^{\Lambda\times L} \) are both
singletons. So there are unique localised regular representations \(
\pi\colon\Cst(\Z^d)\to\CstRoe(\Z^d) \) and \(
\Cst(\Z^{d+1})\to\CstRoe(\Z^{d+1}) \).
Let \( N=1 \). The \st-homomorphism \(
\varphi^\Gpd_1\colon\Cst(\Z^d)\to\Cst(\Z^{d+1}) \) maps an element in 
\( \Cst(\Z^{d+1}) \) to its restriction to the first \( d \)-coordinates. 
Identify \( \Cst(\Z^d) \) with \( \Cont(\T^d) \) via Fourier transform,
then the map \( \varphi^\Gpd \) sends \( f\in\Cont(\T^d) \) to \( f\otimes
1\in\Cont(\T^d)\otimes\Cont(\T)=\Cont(\T^{d+1}) \). The map \( \varphi^\Gpd \)  
is induced by the injective group homomorphism
\[ 
    \Z^d\to\Z^{d+1},\quad x\mapsto (x,0),
\]
and its image belongs to the kernel of the map
\[
    \KK(\Cl_{n,0},\Cst(\Z^d))\to\KK(\Cl_{n,0},\CstRoe(\Z^d)),
\]
which yields \cite{Ewert-Meyer:Coarse_geometry}*{Proposition 10}. 
\end{example}

\addtocontents{toc}{\SkipTocEntry}
\section*{Acknowledgement} 
The author is grateful to the anonymous referee 
for various illuminating
suggestions, which significantly improved the results of this work.
The author also thanks Chris Bourne, Matthias Frerichs, Matthias
Ludewig and Guo Chuan Thiang for helpful discussions, and
his PhD supervisor Bram Mesland for heartfelt
encouragement and a careful proofreading, helping the author to
eliminate various errors in an earlier version of this work. 
The work is
part of NWO project 613.009.142
``\href{https://www.nwo.nl/projecten/613009142}{Noncommutative index theory
of discrete dynamical systems}''.

\addtocontents{toc}{\SkipTocEntry}
\section*{Author declarations}

\addtocontents{toc}{\SkipTocEntry}
\subsection*{Conflict of interest}
The author has no conflict of interest to disclose.

\addtocontents{toc}{\SkipTocEntry}
\subsection*{Data availability}
Data sharing is not applicable in this study as no new data were created or
analysed.

\bibliography{references}
\end{document}